\documentclass[a4paper]{article}
\usepackage[utf8]{inputenc}
\usepackage[T1]{fontenc}


\usepackage{amsmath,amsfonts}
\let\amsast=\ast
\usepackage{amssymb}
\usepackage{mathtools}
\usepackage{wasysym} 
\usepackage{amsthm}
\usepackage[auth-sc]{authblk} 

\usepackage[svgnames]{xcolor}

\usepackage[]{geometry}

\usepackage[]{hyperref}
\hypersetup{colorlinks=true,
  citecolor=DarkBlue,
  linkcolor=DarkBlue,
  linktoc=page,
  urlcolor=black,
}


\title{Sharp trace Gagliardo-Nirenberg-Sobolev inequalities for convex cones, and convex domains}
\author{Simon Zugmeyer}
\affil{Univ Lyon, Université Claude Bernard Lyon 1, CNRS UMR 5208, Institut Camille Jordan, 43 blvd. du 11 novembre 1918, F-69622 Villeurbanne cedex, France. \texttt{\href{mailto:zugmeyer@math.univ-lyon1.fr}{zugmeyer@math.univ-lyon1.fr}}}
\date{}


\makeatletter 
\g@addto@macro\bfseries{\boldmath}
\makeatother


\newtheorem{thrm}{Theorem}[section]
\newtheorem{prop}[thrm]{Proposition}
\newtheorem{coro}[thrm]{Corollary}
\newtheorem{lemm}[thrm]{Lemma}

\theoremstyle{definition}
\newtheorem{defi}[thrm]{Definition}
\newtheorem*{claim}{Claim}

\theoremstyle{remark}
\newtheorem*{rmrk}{Remark}

\newcommand{\R}{\mathbb{R}}

\newcommand{\N}{\mathbb{N}}

\newcommand{\p}{{p^\amsast}}
\newcommand{\eps}{\varepsilon}
\newcommand{\mc}{\mathcal}

\newcommand{\ic}{\mathbin{\Square}}
\newcommand{\dom}{\operatorname{dom}}
\newcommand{\epi}{\operatorname{epi}}
\newcommand{\lip}{\operatorname{Lip}}

\DeclarePairedDelimiter{\norm}{\lVert}{\rVert} 
\DeclarePairedDelimiter{\abs}{\lvert}{\rvert}

\makeatletter
\renewcommand\d[1]{\mspace{2mu}\mathrm{d}#1\@ifnextchar\d{\mspace{-1mu}}{}}
\let\oldabs\abs
\def\abs{\@ifstar{\oldabs}{\oldabs*}}
\let\oldnorm\norm
\def\norm{\@ifstar{\oldnorm}{\oldnorm*}}
\makeatother

\begin{document}

\maketitle

\begin{abstract}
  We find a new sharp trace Gagliardo-Nirenberg-Sobolev inequality on convex cones, aswell as a sharp weighted trace Sobolev inequality on epigraphs of convex functions. This is done by using a generalized Borell-Brascamp-Lieb inequality, coming from the Brunn-Minkowski theory. 
  \par{\textbf{Keywords:}} Sobolev inequality, Gagliardo-Nirenberg-Sobolev inequality, Hamilton-Jacobi equation
\end{abstract}


\section{Introduction and main results}

The classical Sobolev inequality states that, for any function \(f\) sufficiently smooth and decaying fast enough at infinity, defined on the Euclidean space \(\R^n\) with \(n\geq 2\) (for instance, \(f\in\mc C^\infty_c(\R^n)\)), and for any \(p\in[1,n)\),
\begin{equation}\label{eq:classical_sobolev}
\norm{f}_{L^\p(\R^n)} \leq C \norm{\nabla f}_{L^p(\R^n)},\quad \p=\frac{pn}{n-p},
\end{equation}
Furthermore, equality is reached in inequality \eqref{eq:classical_sobolev} if \(f\) can be written
\[
f(x)=\left(1+\norm x^{p/(p-1)}\right)^{\frac{p-n}{p}},
\]
up to a translation, a rescaling, and multiplication by a constant, where \(\norm .\) is the Euclidean norm. This was proved by Talenti \cite{talenti} and Aubin \cite{aubin} independently for $p=2$. The Sobolev inequality can be seen as a corollary of a more general inequality, the Gagliardo-Nirenberg inequality, which states that
\begin{equation}\label{eq:classical_GN}
  \norm{f}_{L^q(\R^n)}\leq C\norm{\nabla f}_{L^p(\R^n)}^\theta \norm{f}_{L^r(\R^n)}^{1-\theta},
\end{equation}
for any \(p\in[1,n)\), \(q,r\in[1,+\infty]\), \(\theta\in[0,1]\)  such that
\[
\frac{1}{q}=\left(\frac{1}{p}-\frac{1}{n}\right)\theta +\frac{1-\theta}{r};
\]
whence the case \(\theta=1\) is exactly the Sobolev inequality. This family of inequalities has been notably investigated for \(p=2\) by del Pino and Dolbeault \cite{delpino_dolbeault}, who have not only found an explicit sharp constant, but also proved that there is equality if, and only if, \(f\) has the form
\[
f(x)=\left(1+\norm x^2\right)^{\frac{2}{2-q}},
\]
up to, once again, a translation, a rescaling, and multiplication by a constant.

As Bobkov and Ledoux \cite{bobkov_ledoux} showed, these sharp inequalities can be reached within the framework of the Brunn-Minkovski theory \cite{schneider}. With this approach, the sharp inequality follows in the more general case where the Euclidean norm is replaced by a generic norm on \(\R^n\), which is a result already proved by Cordero-Erausquin, Nazaret, and Villani using optimal transport \cite{CENV}. This makes sense, since the Brunn-Minkovski inequality directly implies the isoperimetric inequality, which is famously equivalent to the sharp Sobolev inequality with \(p=1\) (for a nice overview on this subject, see Osserman's article on the isoperimetric inequality \cite{osserman}).

The key tool Bobkov and Ledoux use is an extended Borell-Brascamp-Lieb inequality, a quick proof of which using optimal transport is given by Bolley, Cordero-Erausquin, Fujita, Gentil and Guillin \cite{BCEFGG}. 
For a bit of context, let us state the Brunn-Minkoski inequality: for any compact nonempty subsets $A$ and $B$ in $\R^n$, and any $t\in\left[0,1\right]$
\[
\abs{tA+(1-t)B}^{1/n}\geq t\abs A^{1/n} + (1-t)\abs B^{1/n},
\]
where $\abs.$ denotes the Lebesgue measure on $\R^n$. This is to say that the volume, to the power $1/n$, is concave with respect to the Minkowski sum, defined by $A+B=\{a+b,\,(a,b)\in A\times B\}$.
The classical Borell-Brascamp-Lieb inequality \cite{borell}\cite{brascamp_lieb}, just like the isoperimetric inequality, follows from the Brunn-Minkowski inequality. It is, in some sense, its functional counterpart: let \(t\in [0,1]\) and \(u,v,w:\R^n\to(0;+\infty]\) such that for all \(x,\,y\in\R^n\),
\[
w((1-t)x+ty) \leq \left((1-t)(u(x))^{-1/n}+t(v(y))^{-1/n}\right)^{-n},
\]
then
\[
\int w\geq \min\left(\int u,\int v\right).
\]
Playing with the exponents and normalizing this inequality gives the following reformulation of the Borell-Brascamp-Lieb inequality:
let \(g\), \(W\), and \(H:\R^n\to(0,+\infty]\), and \(t\in[0,1]\), such that \(\int g^{-n}=\int W^{-n}=1\) and
\[
\forall x,y\in\R^n,\quad H((1-t)x+ty)\leq (1-t)g(x)+tW(y)
\]
then
\begin{equation}\label{eq:classical_BBL}
  \int H^{-n} \geq 1.
\end{equation}
Applying this inequality to the greatest function \(H\) meeting these criteria allows us to prove that
\begin{equation}\label{eq:derived_classical_BBL}
\int W^\amsast(\nabla g)g^{-n-1}\geq 0,
\end{equation}
where \(W^*\) is the Legendre transform of \(W\). This inequality, as we will see in the next section, turns out to be equivalent to the Borell-Brascamp-Lieb inequality we use here. This might look like it is to be expected, because of the semigroup structure that underlies the theorem, but is actually a little bit surprising, because said semigroup is not quite linear. The equivalence between the more general theorems with which we work here remains an open question.

Inequality \eqref{eq:derived_classical_BBL} can, in turn, be used to prove sharp Sobolev-type inequalities, but in the end proves to be limited as it does not allow to reach the full range of Gagliardo-Nirenberg inequalities showcased by del Pino and Dolbeault \cite{delpino_dolbeault}. Thus, a better inequality to work with is the following extension of the Borell-Brascamp-Lieb inequality
\begin{thrm}\label{th:better_BBL}
  Let \(n\geq 2\), and \(t\in[0,1]\). Let \(g\), \(W\), and \(H:\R^n\to(0,+\infty]\) be measurable functions such that \(\int g^{-n}=\int W^{-n}=1\) and
    \begin{equation}\label{eq:better_BBL}
      \forall \,x,\,y\in\R^n,\quad H((1-t)x+ty)\leq (1-t)g(x)+tW(y)
    \end{equation}
    then
    \[
    \int H^{1-n} \geq (1-t)\int g^{1-n}+t\int W^{1-n}.
    \]  
\end{thrm}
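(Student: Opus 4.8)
The plan is to prove this by optimal transportation, following the strategy of Bobkov and Ledoux \cite{bobkov_ledoux} and of Bolley, Cordero-Erausquin, Fujita, Gentil and Guillin \cite{BCEFGG}. The cases $t\in\{0,1\}$ are immediate: for $t=0$ the hypothesis \eqref{eq:better_BBL} reads $H\le g$ pointwise, and since $z\mapsto z^{1-n}$ is nonincreasing on $(0,+\infty)$ (here is where $n\ge 2$ enters), this gives $\int H^{1-n}\ge\int g^{1-n}$; $t=1$ is symmetric. So assume $t\in(0,1)$. Set $\mathrm{d}\mu=g^{-n}\,\mathrm{d}x$ and $\mathrm{d}\nu=W^{-n}\,\mathrm{d}x$; by hypothesis these are probability measures on $\R^n$, and $\mu(\{g=+\infty\})=\nu(\{W=+\infty\})=0$. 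Let $T=\nabla\varphi$, with $\varphi$ convex, be the monotone transport map pushing $\mu$ onto $\nu$, which exists by McCann's theorem since $\mu$ is absolutely continuous. By Alexandrov's differentiability theorem $T$ is differentiable $\mu$-a.e., the Hessian $\mathrm{D}^2\varphi(x)$ is symmetric positive semidefinite, one has $g(x),W(T(x))\in(0,+\infty)$ for $\mu$-a.e.\ $x$, and (again by McCann) the Monge--Amp\`ere equation
\[
g(x)^{-n}=W\bigl(T(x)\bigr)^{-n}\det\mathrm{D}^2\varphi(x)
\]
holds for $\mu$-a.e.\ $x$; equivalently, $\bigl(\det\mathrm{D}^2\varphi(x)\bigr)^{1/n}=W(T(x))/g(x)$.

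Next I would introduce the displacement interpolant $T_t(x)=(1-t)x+tT(x)=\nabla\psi_t(x)$, where $\psi_t(x)=(1-t)\abs{x}^2/2+t\varphi(x)$ is strictly convex because $t<1$; hence $T_t$ is injective, and for a.e.\ $x$ the matrix $\mathrm{D}T_t(x)=(1-t)I+t\,\mathrm{D}^2\varphi(x)\succeq(1-t)I$ has $\det\mathrm{D}T_t(x)=\prod_{i=1}^n\bigl((1-t)+t\lambda_i(x)\bigr)>0$, where $\lambda_1(x),\dots,\lambda_n(x)\ge 0$ are the eigenvalues of $\mathrm{D}^2\varphi(x)$. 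Since $H^{1-n}\ge 0$, McCann's change-of-variables theorem for gradients of convex functions gives $\int_{\R^n}H^{1-n}\ge\int_{\R^n}H(T_t(x))^{1-n}\det\mathrm{D}T_t(x)\,\mathrm{d}x$; inserting the hypothesis $H(T_t(x))\le(1-t)g(x)+tW(T(x))$ (valid $\mu$-a.e.) and using once more that $z\mapsto z^{1-n}$ is nonincreasing, I obtain
\[
\int_{\R^n}H^{1-n}\ \ge\ \int_{\R^n}\bigl((1-t)g(x)+tW(T(x))\bigr)^{1-n}\,\prod_{i=1}^n\bigl((1-t)+t\lambda_i(x)\bigr)\,\mathrm{d}x.
\]

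The pointwise heart of the argument is Minkowski's determinant inequality applied to the positive semidefinite matrices $(1-t)I$ and $t\,\mathrm{D}^2\varphi(x)$: it gives $\Bigl(\prod_{i=1}^n\bigl((1-t)+t\lambda_i(x)\bigr)\Bigr)^{1/n}\ge(1-t)+t\,\bigl(\det\mathrm{D}^2\varphi(x)\bigr)^{1/n}=(1-t)+t\,W(T(x))/g(x)$, whence, writing $A=g(x)$ and $B=W(T(x))$,
\[
\bigl((1-t)A+tB\bigr)^{1-n}\prod_{i=1}^n\bigl((1-t)+t\lambda_i(x)\bigr)\ \ge\ \bigl((1-t)A+tB\bigr)^{1-n}\,\frac{\bigl((1-t)A+tB\bigr)^{n}}{A^{n}}\ =\ \frac{(1-t)g(x)+tW(T(x))}{g(x)^{n}}.
\]
Integrating this lower bound and splitting the sum yields $\int H^{1-n}\ge(1-t)\int g^{1-n}+t\int W(T(x))\,g(x)^{-n}\,\mathrm{d}x$, and since $T_{\#}\mu=\nu$,
\[
\int_{\R^n}W\bigl(T(x)\bigr)\,g(x)^{-n}\,\mathrm{d}x=\int_{\R^n}W(y)\,\mathrm{d}\nu(y)=\int_{\R^n}W^{1-n}.
\]
This gives $\int H^{1-n}\ge(1-t)\int g^{1-n}+t\int W^{1-n}$, the whole chain remaining valid, as $+\infty\ge+\infty$, even when $\int g^{1-n}$ or $\int W^{1-n}$ is infinite.

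The only genuine obstacle I anticipate is the measure-theoretic justification of the displacement step — the $\mu$-a.e.\ differentiability of the transport map, the Alexandrov form of the Monge--Amp\`ere equation, and McCann's inequality $\int f(\nabla\psi)\det\mathrm{D}^2\psi\le\int f$ for convex $\psi$ and $f\ge 0$ — all under the sole assumption that $g$ and $W$ are measurable, so that the densities $g^{-n},W^{-n}$ may vanish or blow up; this is classical but has to be quoted with care. Everything else reduces to the interplay between the monotonicity of $z\mapsto z^{1-n}$ and Minkowski's determinant inequality. As a variant sidestepping the latter, one may take $T$ to be the Knothe--Rosenblatt rearrangement rather than the monotone (Brenier) map: then the relevant Jacobian is again a product $\prod_{i}\bigl((1-t)+t d_i(x)\bigr)$ with $d_i(x)\ge 0$ and $\prod_i d_i(x)=\bigl(W(T(x))/g(x)\bigr)^{n}$, and Minkowski's inequality is replaced by the elementary superadditivity of the geometric mean, $\Bigl(\prod_i\bigl((1-t)+t d_i\bigr)\Bigr)^{1/n}\ge(1-t)+t\bigl(\prod_i d_i\bigr)^{1/n}$.
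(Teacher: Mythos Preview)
Your argument is correct and is precisely the optimal-transport proof that the paper invokes: the paper does not prove Theorem~\ref{th:better_BBL} directly but attributes it to \cite{BCEFGG}, whose method (Brenier/McCann map, Monge--Amp\`ere, Minkowski's determinant inequality, displacement interpolation) you have reproduced faithfully. Your remark that the Knothe--Rosenblatt map works equally well, replacing Minkowski's inequality by the AM--GM superadditivity, is also in the spirit of the cited reference.
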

With this theorem, we are able to prove sharp trace-Sobolev inequalities on convex domains. More specifically, we prove sharp trace Sobolev in some convex domains, and sharp trace Gagliardo-Nirenberg inequalities in convex cones. In what follows, \(\norm.\) is a norm on \(\R^n\), and \(\norm._*\) is the dual norm, defined by \(\norm x_*=\sup_{\norm y=1}x\cdot y\). In \(L^q\) norms of vector functions, the dual norm \(\norm._*\) will be used.
Let $\varphi:\R^{n-1}\to\R$ be a convex function such that $\varphi(0)=0$. We consider functions defined on $\varphi$'s epigraph, that is $\Omega=\{(x_1,x_2)\in\R^{n-1}\times\R,\,x_2\geq\varphi(x_1)\}$. We say that $\Omega$ is a convex cone whenever $\varphi$ is positive homogeneous of degree $1$: for all $t>0$ and $x_1\in\R^{n-1}$, $\varphi(tx_1)=t\varphi(x_1)$. 

\begin{thrm}[Sharp trace Gagliardo-Nirenberg inequality]\label{th:trace_GN_cone}
  Let \(a\geq n>p>1\), and \(\Omega=\{(x_1,x_2)\in\R^{n-1}\times\R,\,x_2\geq\varphi(x_1)\}\) be a convex cone. There exists a positive constant $D_{n,p,a}(\Omega)$ such that for any non-negative function \(f\in C^\infty_c(\Omega)\),
  \begin{equation}\label{eq:trace_GN_cone}
    \left(\int_{\R^{n-1}}f^{q}(x,\varphi(x))\d x\right)^{1/q}\leq D_{n,p,a}(\Omega) \norm{\nabla f}_{L^p(\Omega)}^\theta \norm{f}_{L^q(\Omega)}^{1-\theta},
  \end{equation}
  where
  \[
  \theta=\frac{a-p}{p(a-n-1)+n},\quad q=p\frac{a-1}{a-p}.
  \]
  Furthermore, when \(f(x)=\norm{(x_1,x_2+1)}^{-\frac{a-p}{p-1}}\), then \eqref{eq:trace_GN_cone} is an equality.
\end{thrm}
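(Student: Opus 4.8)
The plan is to follow the Bobkov--Ledoux scheme, using the generalized Borell--Brascamp--Lieb inequality of Theorem~\ref{th:better_BBL} as the engine, but adapted to a trace (boundary) setting. First I would linearize Theorem~\ref{th:better_BBL}: given a reference density $W$ and applying \eqref{eq:better_BBL} to the largest admissible $H$ (an inf-convolution of $g$ and $W$ along the segment), one lets $t\to0$ to obtain a differential inequality of the form $\int W^*(\nabla g)\,g^{-n-1}\,\mathrm dx \geq (\text{something involving }\int g^{1-n}\text{ and }\int W^{1-n})$, generalizing \eqref{eq:derived_classical_BBL}. This is the key ``Hamilton--Jacobi'' step alluded to in the abstract: the extremal $H$ solves a Hamilton--Jacobi equation with Hamiltonian built from $W^*$, and differentiating in $t$ at $t=0$ produces the usable inequality.

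\textbf{Reduction to the cone and choice of test densities.} Next I would set up the change of variables adapted to the cone $\Omega$. Write $g = f^{-\beta}$ for a suitable negative power $\beta$ chosen so that $g^{-n}$ and $g^{1-n}$ reproduce, respectively, the $L^q(\Omega)$ norm of $f$ and (after an integration by parts against the boundary $x_2=\varphi(x_1)$) the trace term $\int_{\R^{n-1}}f^q(x,\varphi(x))\,\mathrm dx$. The integration by parts is where the convex-cone hypothesis enters: homogeneity of degree $1$ of $\varphi$ means $x\cdot\nabla\varphi(x)=\varphi(x)$ (Euler's relation), so the outward normal flux through $\partial\Omega$ of the vector field $x\mapsto x\, g^{1-n}(x)$ has a clean expression, and the divergence theorem on $\Omega$ turns $\int_\Omega \mathrm{div}(x\,g^{1-n})$ into a boundary integral equal to the trace quantity plus a bulk term $\propto \int_\Omega g^{1-n}$. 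For the reference function $W$ I would take $W(y)=c\,\|(y_1,y_2+1)\|^{?}$-type profile — precisely the Legendre-dual partner of the announced extremizer $f(x)=\|(x_1,x_2+1)\|^{-(a-p)/(p-1)}$ — normalized so that $\int_\Omega W^{-n}=1$; here the exponent $a$ appears because we are free to integrate over $\R^n$ with the homogeneity-$a$ weight implicit in matching $\int W^{-n}$ and $\int W^{1-n}$ to the same power.

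\textbf{Assembling the inequality.} With $g=f^{-\beta}$ plugged in, $\nabla g = -\beta f^{-\beta-1}\nabla f$, so $W^*(\nabla g)$ becomes (by $1$-homogeneity of $W^*$, since $W$ is a norm-like gauge) a constant times $f^{-\beta-1}\|\nabla f\|_*$ — no, more carefully, $W^*$ is positively homogeneous of degree $1$ only if $W$ is a gauge; since our $W$ is a shifted power of a norm, $W^*$ is itself a (shifted) power, and the resulting integrand is a product $f^{\alpha}\|\nabla f\|_*^{\,?}$ to which one applies Hölder's inequality with the exponents $p$ and $q$ dictated by the scaling relations. Collecting terms, one arrives at an inequality of the shape
\[
\Big(\int_{\R^{n-1}}f^q(x,\varphi(x))\,\mathrm dx\Big)^{1/q}\;\lesssim\;\|\nabla f\|_{L^p(\Omega)}^{\theta}\,\|f\|_{L^q(\Omega)}^{1-\theta},
\]
with $\theta$ and $q$ exactly as stated once the exponent bookkeeping is done; tracking the constant through the normalizations $\int g^{-n}=\int W^{-n}=1$ yields $D_{n,p,a}(\Omega)$. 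The equality case is automatic from the construction: Theorem~\ref{th:better_BBL} is an equality precisely when $g$ and $W$ are ``parallel'' (translates/rescalings of the same profile along the relevant segment), and the Hölder step is an equality when $\nabla g$ is aligned with the gradient direction forced by $W$; choosing $f(x)=\|(x_1,x_2+1)\|^{-(a-p)/(p-1)}$ makes $g=f^{-\beta}$ exactly that profile, so all inequalities in the chain are saturated.

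\textbf{Main obstacle.} I expect the principal difficulty to be the boundary/trace manipulation: justifying the integration by parts on the (possibly non-smooth, unbounded) convex cone $\Omega$, controlling the ``behavior at the apex'' and at infinity, and above all choosing the power $\beta$ and the reference profile $W$ so that the two bulk terms $\int_\Omega g^{1-n}$ produced — one from the BBL right-hand side, one from the divergence-theorem remainder — \emph{cancel exactly} rather than merely bound each other. That cancellation is what forces the precise algebraic relations among $a$, $n$, $p$, $q$, $\theta$, and it is the delicate point where the generalized Theorem~\ref{th:better_BBL} (with its $\int g^{1-n}+\int W^{1-n}$ right-hand side) does real work that the classical version \eqref{eq:classical_BBL} cannot. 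A secondary technical point is verifying that the inf-convolution $H$ stays in $(0,+\infty]$ and is measurable so Theorem~\ref{th:better_BBL} applies, and that the $t\to0$ differentiation is legitimate (dominated convergence with an integrable majorant coming from the compact support of $f$).
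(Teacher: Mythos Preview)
Your overall architecture—linearize a Borell--Brascamp--Lieb inequality at $t=0$ via the Hamilton--Jacobi semigroup, substitute $g=f^{-\beta}$, and track the equality case—matches the paper. But two load-bearing steps are misidentified, and as written the argument would not close.

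\textbf{Wrong BBL input.} You invoke Theorem~\ref{th:better_BBL}, whose exponent is the ambient dimension~$n$. That version only yields the case $a=n$, i.e.\ the trace \emph{Sobolev} corollary, not the full Gagliardo--Nirenberg range. The paper actually starts from the more general Theorem~\ref{th:BBL},
\[
(1+h)^{a-n}\int Q_h^W(g)^{1-a}\ \geq\ \int g^{1-a}+h\int W^{1-a},
\]
where $a\geq n$ is a free parameter; it is precisely this extra parameter that produces $q=p(a-1)/(a-p)$ and $\theta$. Your sentence ``the exponent $a$ appears because we are free to integrate \ldots\ with the homogeneity-$a$ weight'' is not a mechanism that will make $a$ appear from the $n$-version.

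\textbf{Wrong origin of the trace term.} You plan to manufacture $\int_{\R^{n-1}}f^q(x,\varphi(x))\,\mathrm dx$ by a divergence-theorem integration by parts on the cone, using Euler's relation. In the paper the trace term does \emph{not} come from integration by parts. Instead, $g$ is supported on $\Omega$ while $W$ is supported on $\Omega_1=\Omega+e$; then $Q_h^W(g)$ is supported on $B_h$, and Lemma~\ref{le:cone_support} shows that for a convex cone $B_h=\Omega_h=\Omega+he$. Differentiating $\int_{\Omega_h}Q_h^W(g)^{1-a}$ at $h=0$ therefore has two pieces: the Hamilton--Jacobi piece $(a-1)\int_\Omega W^*(\nabla g)/g^a$, and the moving-domain piece $-\int_{\R^{n-1}}g^{1-a}(x_1,\varphi(x_1))\,\mathrm dx_1$. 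The cone hypothesis is used to identify $B_h$ with $\Omega_h$, not through Euler's relation in a divergence identity. Correspondingly, $W$ is \emph{not} a shifted norm-power: the paper takes $W(x)=C\|x\|^q/q$ (on $\Omega_1$), so that $W^*(y)=C^{1-p}\|y\|_*^p/p$ cleanly.

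\textbf{No exact cancellation; wrong final step.} After linearization one gets \eqref{eq:derived_BBL}, which still carries the bulk term $(a-n)\int_\Omega g^{1-a}$ alongside the gradient and trace terms. There is no cancellation to engineer. The paper de-normalizes, applies \emph{Young's} inequality (not H\"older) to the pair $\big(\int\|\nabla f\|_*^p\big)\beta^{p(p-1)/(a-p)}$ and $\beta^{p(a-1)/(a-p)}$ with $\beta=\|f\|_{L^{pa/(a-p)}}$, and then performs a scaling optimization $f\mapsto f(\lambda\,\cdot)$ over $\lambda>0$—this is where the cone homogeneity is used again—to absorb the remaining bulk term and produce the product form $\|\nabla f\|_p^\theta\|f\|_q^{1-\theta}$. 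Your H\"older-then-cancel plan does not correspond to how the exponents actually combine.
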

The fact that there exists a function for which the equality is reached means that the constant $D_{n,p,a}(\Omega)$ may be computed explicitly.
Choosing \(a=n\), Theorem \ref{th:trace_GN_cone} immediately yields the sharp trace Sobolev inequality as a corollary:
\begin{coro}[Sharp trace Sobolev inequality]
  Let \(n>p>1\), and \(\Omega=\{(x_1,x_2)\in\R^{n-1}\times\R,x_2\geq\varphi(x_1)\}\) be a convex cone. There exists a positive constant $D_{n,p}(\Omega)=D_{n,p,n}(\Omega)$ such that for any non-negative function \(f\in C^\infty_c(\Omega)\),
  \begin{equation}\label{eq:trace_sob_cone}
    \left(\int_{\R^{n-1}}f^{p\frac{n-1}{n-p}}(x,\varphi(x))\d x\right)^{\frac{n-p}{p(n-1)}}\leq D_{n,p}(\Omega) \norm{\nabla f}_{L^p(\Omega)},
  \end{equation}
  Furthermore, when \(f(x)=\norm{(x_1,x_2+1)}^{-\frac{n-p}{p-1}}\), then \eqref{eq:trace_sob_cone} is an equality.
\end{coro}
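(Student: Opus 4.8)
First, the Corollary is simply the case $a=n$ of Theorem~\ref{th:trace_GN_cone}: there $\theta=\frac{n-p}{p(n-n-1)+n}=1$, so the factor $\norm f_{L^q(\Omega)}^{1-\theta}$ drops out; $q=p\frac{n-1}{n-p}$, whence $\tfrac1q=\tfrac{n-p}{p(n-1)}$, turning \eqref{eq:trace_GN_cone} into \eqref{eq:trace_sob_cone}; and the extremal profile $\norm{(x_1,x_2+1)}^{-(a-p)/(p-1)}$ becomes $\norm{(x_1,x_2+1)}^{-(n-p)/(p-1)}$, which is thus a case of equality. So everything reduces to Theorem~\ref{th:trace_GN_cone}; I will assume that statement is available, but since its proof is the real content I sketch it now, following the Bobkov--Ledoux passage from the Borell--Brascamp--Lieb inequality (in the form of Theorem~\ref{th:better_BBL}) to sharp Sobolev-type inequalities, adapted to the conical geometry. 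To a non-negative $f\in C^\infty_c(\Omega)$ I attach $g\colon\R^n\to(0,+\infty]$, a renormalized negative power of $f$ on $\Omega$ and $+\infty$ outside, normalized so that $\int_\Omega g^{-n}=1$; the quantities $\norm{\nabla f}_{L^p(\Omega)}$, $\norm f_{L^q(\Omega)}$ and $\int_{\R^{n-1}}f^q(x,\varphi(x))\,\d x$ are then all expressible through $g$, $\nabla g$ and the restriction of a power of $g$ to $\partial\Omega$. In parallel I take for $W$ the object built the same way from the candidate extremizer $f_\infty(x)=\norm{(x_1,x_2+1)}^{-(a-p)/(p-1)}$: since $f_\infty$ is explicit, $W$ is a renormalized power $y\mapsto\mathrm{const}\cdot\norm{(y_1,y_2+1)}^{s}$ with $s=s(n,p,a)>1$, and its Legendre transform $W^\amsast$ (for $\norm\cdot_*$) is explicit — on the cone of directions for which the defining supremum is interior to $\Omega$ it equals $p\mapsto b\cdot p+\mathrm{const}\cdot\norm{p}_*^{s'}$ with $b=(0,-1)$ and $s'$ conjugate to $s$, and it is $\le$ this in general. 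A preliminary check shows $\int_\Omega g^{-n}=\int_\Omega W^{-n}=1$ is compatible with $a\ge n>p>1$: these are exactly the conditions making the integrals converge at infinity and near $b\notin\overline\Omega$, and $p>1$ is what keeps $s'$ finite.

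Now apply Theorem~\ref{th:better_BBL} to $g$, $W$ and the largest admissible $H=H_t$, the inf-convolution $H_t(z)=\inf\{(1-t)g(x)+tW(y):(1-t)x+ty=z\}$. Then $F(t):=\int_\Omega H_t^{1-n}-(1-t)\int_\Omega g^{1-n}-t\int_\Omega W^{1-n}\ge0$ on $[0,1]$, and $F(0)=0$ because $H_0=g$, hence $F'(0^+)\ge0$. With the expansion $H_t=g+t\,m+o(t)$, $m(z)=\nabla g(z)\cdot z-g(z)-W^\amsast(\nabla g(z))$, this becomes $(1-n)\int_\Omega m\,g^{-n}\ge\int_\Omega W^{1-n}-\int_\Omega g^{1-n}$, after which one integrates by parts the term $\int_\Omega(\nabla g\cdot z)g^{-n}=\tfrac1{1-n}\int_\Omega\nabla(g^{1-n})\cdot z$. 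Here the homogeneity of $\Omega$ is what makes the trace appear: $\partial\Omega$ is swept by rays through the apex $0$, so the radial field $z\mapsto z$ is tangent to $\partial\Omega$ and produces no boundary flux; but substituting the closed form of $W^\amsast$ replaces $z$ by $z-b$, and the flux of the constant field $-b$ through $\partial\Omega$ does not vanish — since $b=(0,-1)$ and $\Omega$ is the epigraph of $\varphi$ one has $b\cdot\nu_z\,\d{\sigma}(z)=\d x$ along the graph, so that flux is a multiple of $\int_{\R^{n-1}}f^q(x,\varphi(x))\,\d x$, i.e.\ of the $q$-th power of the left-hand side of \eqref{eq:trace_GN_cone}, while the inequality $W^\amsast(p)\le b\cdot p+\mathrm{const}\cdot\norm{p}_*^{s'}$ only helps. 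Collecting terms and undoing the normalization, $F'(0^+)\ge0$ yields an inequality bounding $\int_{\R^{n-1}}f^q(x,\varphi(x))\,\d x$ above by a product of powers of $\int_\Omega\norm{\nabla f}_*^{s'}f^{\beta}\,\d z$ and $\int_\Omega f^{Q}\,\d z$, the exponents $\beta,Q$ and the powers being determined by $n,p,a$.

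It remains to recognize the right-hand side. An application of Hölder to $\int_\Omega\norm{\nabla f}_*^{s'}f^{\beta}\,\d z$ splits off $\norm{\nabla f}_{L^p(\Omega)}^{s'}$ and a power of $\norm f_{L^q(\Omega)}$, and, combined with the factor $\int_\Omega f^{Q}$ and the identity $s'=q\theta$ that one has built in, produces $\int_{\R^{n-1}}f^q(x,\varphi(x))\,\d x\le D^{\,q}\norm{\nabla f}_{L^p(\Omega)}^{q\theta}\norm f_{L^q(\Omega)}^{q(1-\theta)}$ — which is where $q=p\frac{a-1}{a-p}$ and $\theta=\frac{a-p}{p(a-n-1)+n}$ are forced; taking $q$-th roots gives \eqref{eq:trace_GN_cone} with $D_{n,p,a}(\Omega)$ the explicit product of the constants accumulated, the scale invariances of \eqref{eq:trace_GN_cone} under $f\mapsto\lambda f$ and $f\mapsto f(\cdot/\mu)$ (the latter legal because $\Omega$ is a cone) being an internal check. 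Equality for $f=f_\infty$ follows by inspection: then $g=W$ is convex, $H_t\equiv g$ and Theorem~\ref{th:better_BBL} is an equality, $\nabla g$ stays in the admissible cone so $W^\amsast(\nabla g)=b\cdot\nabla g+\mathrm{const}\cdot\norm{\nabla g}_*^{s'}$ with no loss, and the Hölder step is an equality since $\norm{\nabla f_\infty}_*^{s'}$ and $f_\infty^{\beta}$ are proportional powers of $\norm{(x_1,x_2+1)}$. The step I expect to be the real obstacle is the rigorous first-order analysis of $H_t$ as $t\to0^+$: differentiating under the integral sign with only the regularity $f$ and the convex (hence merely Lipschitz) $\varphi$ provide, and controlling the part of $\Omega$ where the infimum defining $W^\amsast(\nabla g(z))$ is attained on $\partial\Omega$ rather than in the interior — there $m$ has a different form and the extra boundary terms must be shown to have a favorable sign — together with making the exponent arithmetic close up to precisely the constant claimed.
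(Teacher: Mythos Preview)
Your reduction of the Corollary to the case $a=n$ of Theorem~\ref{th:trace_GN_cone} is exactly the paper's one-line proof. Your subsequent sketch of Theorem~\ref{th:trace_GN_cone} is in the same spirit as the paper's---differentiate the Borell--Brascamp--Lieb inequality at the endpoint and read off the trace term---but packaged differently. The paper takes $W=C\norm{\cdot}^q/q$ on the \emph{translated} cone $\Omega_1=\Omega+\{e\}$, so that $\dom Q_h^W(g)=\Omega_h$ (Lemma~\ref{le:cone_support}); the trace integral then comes from differentiating the moving domain $\int_{\Omega_h}$ in $h$, not from an integration by parts. You instead put $W$ on $\Omega$ itself (the translate of the paper's $W$ by $-e$), which keeps $\dom H_t=\Omega$ for all $t$ and pushes the work into the Legendre transform: your $W^\amsast$ acquires the affine piece $b\cdot p$ with $b=-e$, and the trace appears when that piece is integrated by parts. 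The two mechanisms are equivalent up to this translation.

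Two places where your sketch diverges from the paper. First, you invoke Theorem~\ref{th:better_BBL}, which is the case $a=n$; the paper's argument for general $a\ge n$ uses the stronger Theorem~\ref{th:BBL}, whose prefactor $(1+h)^{a-n}$ is precisely what yields the extra term $(a-n)\int_\Omega g^{1-a}$ (eliminated afterwards by dilating $f\mapsto f(\lambda\,\cdot)$ and optimizing in $\lambda$). For the Corollary this is harmless. Second, there is no H\"older step: with $f=g^{-(a-p)/p}$ one has $\norm{\nabla g}_*^p/g^a=\bigl(\tfrac{p}{a-p}\bigr)^p\norm{\nabla f}_*^p$ identically, so $\int_\Omega\norm{\nabla f}_*^p$ appears directly, with no factor $f^\beta$ to split off. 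What the paper does use is Young's inequality~\eqref{eq:young} to absorb the normalization $\beta=\norm{f}_{L^{pa/(a-p)}}$; your H\"older step seems to be a misremembering of that.
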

The case $\Omega=\R^{n}_+$ has already been studied by Nazaret \cite{nazaret}.

If we only assume \(\Omega\) to be convex, we prove, under some growth criteria on \(\Omega\), the following sharp weighted trace Sobolev inequality:
\begin{thrm}[Sharp trace Sobolev inequality]\label{th:trace_sob}
  Let \(n>p>1\), and \(\Omega=\{(x_1,x_2)\in\R^{n-1}\times\R,x_2\geq\varphi(x_1)\}\) be a convex set. Assume that there exist some constants \(C>0\) and \(R>0\) such that
  \[
  \forall\,x_1\in\R^{n-1} \text{ s.t. }  \norm{x_1}>R,\quad \abs{x_1\cdot\nabla\varphi(x_1)}\leq C\norm{(x_1,\varphi(x_1))}.
  \]
  Then, there exists a positive constant $D'_{n,p}(\Omega)$ such that for any nonnegative function \(f\in C^\infty_c(\Omega)\),
  \begin{equation}\label{eq:trace_sob}
    \int_{\R^{n-1}}f^{p\frac{n-1}{n-p}}(x,\varphi(x))P(x)\d x\leq D'_{n,p}(\Omega) \left(\int_\Omega\norm{\nabla f}_*^p\right)^{\frac{n-1}{n-p}}
  \end{equation}
  where \(P(x)=1+\varphi(x)-x\cdot\nabla\varphi(x)\).
  Furthermore, when \(f(x)=\norm{(x_1,x_2+1)}^{-\frac{n-p}{p-1}}\), then \eqref{eq:trace_sob} is an equality.
\end{thrm}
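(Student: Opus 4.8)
The plan is to deduce Theorem~\ref{th:trace_sob} from the extended Borell-Brascamp-Lieb inequality (Theorem~\ref{th:better_BBL}) by the same mass-transport / change-of-variables scheme that underlies the cone case, taking $a=n$ and carefully keeping track of the geometry of the boundary $\partial\Omega$. First I would reduce to $f$ with $\norm{\nabla f}_{L^p(\Omega)}=1$ normalized and introduce the probability density obtained from the extremal profile $h(x)=\norm{(x_1,x_2+1)}^{-(n-p)/(p-1)}$ — more precisely, work with $g^{-n}$ proportional to $f^{q}$ on $\Omega$ (with $q=p(n-1)/(n-p)$ as dictated by scaling) and $W^{-n}$ proportional to the corresponding power of the conjectured extremizer, both renormalized to be probability densities on $\Omega$. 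The inequality \eqref{eq:better_BBL} with a displacement-interpolation choice of $H$ turns, after optimizing over $t\to 1$ (or differentiating at the appropriate endpoint), into a differential inequality of the form $\int_\Omega W^{*}(\nabla g)\,g^{-n-1}\geq 0$ in the spirit of \eqref{eq:derived_classical_BBL}, but now with the crucial difference that integrating by parts on $\Omega$ produces a boundary term on $\partial\Omega$, which is exactly where the trace integral $\int_{\R^{n-1}}f^{q}(x,\varphi(x))P(x)\,dx$ comes from.

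The key steps, in order, are: (1) fix the two densities and compute the Brenier/Knothe-type map pushing one onto the other, recording that the map is a gradient of a convex function so that the Borell-Brascamp-Lieb hypothesis \eqref{eq:better_BBL} is met pointwise; (2) apply Theorem~\ref{th:better_BBL} and expand both sides to first order near the endpoint $t=1$, so that the zeroth-order terms cancel and the first-order term yields an integral inequality; (3) rewrite that integral inequality, via the pointwise Young/Legendre inequality $x\cdot\xi\le \frac1p\norm{x}_*^p\cdot(\text{something})+\frac1{p'}(\cdots)$ tuned to the exponents $p,q$, as a bound of a bulk term by $\int_\Omega\norm{\nabla f}_*^p$; (4) integrate by parts on $\Omega$: the divergence theorem on the epigraph contributes the outward normal on the graph $x_2=\varphi(x_1)$, whose non-normalized form is $(\nabla\varphi(x_1),-1)$, and this is precisely what manufactures the weight $P(x)=1+\varphi(x)-x\cdot\nabla\varphi(x)$ after one also uses the homogeneity-free identity coming from the explicit extremal profile $\norm{(x_1,x_2+1)}^{-(n-p)/(p-1)}$; (5) check that every step is an equality when $f=h$, which is automatic because $h$ is built from the density $W^{-n}$ and the transport map is the identity, so the whole argument is tight.

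The main obstacle is step~(4) together with the integrability needed to justify it: unlike the convex cone, a general convex $\Omega$ has a boundary that need not be scale-invariant, so the boundary integral arising from integration by parts is genuinely a weighted measure on $\R^{n-1}$ rather than (a multiple of) surface measure, and one must verify that the growth hypothesis $\abs{x_1\cdot\nabla\varphi(x_1)}\le C\norm{(x_1,\varphi(x_1))}$ for $\norm{x_1}>R$ is exactly the condition ensuring that (i) the weight $P$ does not change the finiteness of the relevant integrals, (ii) the contribution ``at infinity'' of the by-parts procedure vanishes for $f\in C^\infty_c(\Omega)$ after the transport rescaling, and (iii) the vector field one integrates by parts is admissible (e.g.\ $\varphi$ is differentiable a.e.\ by convexity, and $P\ge 0$ by convexity of $\varphi$ since $\varphi(0)=0$ forces $\varphi(x)-x\cdot\nabla\varphi(x)\le\varphi(0)=0$, wait — one checks $P(x)=1+\varphi(x_1)-x_1\cdot\nabla\varphi(x_1)\ge 1>0$). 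I would handle this by first proving the inequality for $\varphi\in C^2$ and compactly-supported-gradient-perturbation approximations where the divergence theorem is classical, controlling the boundary-at-infinity term with the growth hypothesis, and then passing to the limit; the equality case requires no limiting argument since the explicit $f=h$ can be checked directly in \eqref{eq:trace_sob}. The exponents $p\frac{n-1}{n-p}$ and $\frac{n-1}{n-p}$ on the two sides are forced by the scaling $f\mapsto f(\lambda\,\cdot)$ combined with the $1$-homogeneity one gets from the structure of the transport argument, so no choice is involved there; it is only the weight $P$ and the admissibility at infinity that demand the extra hypothesis.
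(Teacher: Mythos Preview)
Your plan diverges from the paper on the central mechanical point: where the weight $P$ comes from. You propose obtaining it by integrating by parts on $\Omega$ against the outward normal $(\nabla\varphi,-1)$. The paper never integrates by parts. It writes the dynamical BBL inequality (Theorem~\ref{th:BBL}) as
\[
(1+h)^{a-n}\int_{B_h} Q_h^W(g)^{1-a}\ \ge\ \int_\Omega g^{1-a}+h\int_{\Omega_1}W^{1-a},
\]
with $B_h=\dom Q_h^W(g)=\{x_2\ge h+(1+h)\varphi(x_1/(1+h))\}$ (Lemma~\ref{le:convex_support}), and differentiates at $h=0$ (not $t\to1$). The trace term is a pure Reynolds--transport contribution: differentiating the moving lower limit of the $x_2$-integral yields exactly $P(x_1)=1+\varphi(x_1)-x_1\cdot\nabla\varphi(x_1)$, split in the Appendix into the pieces $(ii)$ and $(iii)$. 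The bulk term $\int_\Omega W^*(\nabla g)g^{-a}$ comes from the Hamilton--Jacobi derivative (Lemma~\ref{le:hamilton_jacobi}); one then takes $W=C\norm{\cdot}^q/q$, applies Young's inequality, sets $a=n$, and removes the normalization. The growth hypothesis on $\varphi$ is used only to justify the dominated-convergence arguments making the differentiation rigorous.

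Your alternative route---a CENV/Nazaret-style argument with an explicit vector field and the divergence theorem---is a reasonable idea: with $V(x)=x+e$ one indeed has $V\cdot(\nabla\varphi,-1)=-P$ on $\partial\Omega$, so the weight would appear that way. But your sketch neither names this vector field nor specifies which functional is integrated by parts or how Young's inequality produces $\norm{\nabla f}_*^p$ on the right, so as written it is a heuristic rather than a proof. You also carry a sign error: convexity with $\varphi(0)=0$ gives $\varphi(x_1)-x_1\cdot\nabla\varphi(x_1)\le 0$, hence $P\le 1$, not $P\ge 1$; the paper explicitly notes $P$ is typically negative outside a compact set (e.g.\ $P=1-\norm{x}^2$ for $\varphi=\norm{\cdot}^2$). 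That is precisely why the statement allows a signed weight and why the growth condition matters for integrability---not, as you suggest, because $P$ stays positive.
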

Once again, $D'_{n,p}(\Omega)$ can be computed explicitly. This inequality may be surprising, since the weight \(P\) can (and usually is, whenever \(\Omega\) is not a cone) negative outside a compact neighbourhood of \(0\), but it is still sharp. For instance, with the set defined by \(\varphi(x)=\norm x^2\), the weight becomes \( P(x)=1-\norm x^2\), which happens to be negative outside the unit ball. One may define $\partial\Omega_+\subset\partial\Omega$ such that $\partial\Omega_+=\{(x_1,\varphi(x_1)), \, P(x_1)> 0\}$. In that case, inequality \eqref{eq:trace_sob} restricted to functions \(f\in C^\infty_c(\mathring \Omega \cup \partial\Omega_+)\) becomes a regular weighted inequality, with a positive weight.

In the next section, we first study the infimal convolution, which is the key tool in the proof of Theorems \ref{th:trace_GN_cone} and \ref{th:trace_sob}. Once these are established, we prove the claimed equivalence between the classical Borell-Brascamp-Lieb inequality \eqref{eq:classical_BBL} and its differentiated formulation \eqref{eq:derived_classical_BBL}, within some limitations. Next, in section \ref{se:sharp_GNS}, we move on to prove the main Theorems \ref{th:trace_GN_cone} and \ref{th:trace_sob}, starting from an improved version of the Borell-Brascamp-Lieb inequality. The technical details, which will be glided over in these sections, can be found in the comprehensive appendix \ref{se:admissibility}, at the end of the paper.


\section{Generalities}

Let \(t\in[0,1)\). To use Theorem \ref{th:better_BBL}, instead of considering any \(H\) such that
\[
\forall \,x,\,y\in\R^n,\quad H((1-t)x+ty)\leq (1-t)g(x)+tW(y),
\]
we may well choose the greatest such function. That is,
\[
H(z)=\inf_{\substack{x,y\in\R^n\\(1-t)x+ty=z}}\{(1-t)g(x)+tW(y)\},
\]
or, writing \(h=t/(1-t)\),
\[
\frac{H(z)}{1-t}=\inf_{y\in\R^n}\left\{g\left(\frac{z}{1-t}-hy\right)+hW(y)\right\}.
\]
This formula, being explicit, allows for some properties to be brought to light. It motivates the definition, and the study, of the so-called infimal convolution:
\begin{defi}
  Let \(f,g:\R^n\to\R\cup\{+\infty\}\). Their infimal convolute \(f\ic g:\R^n\to\R\cup\{+\infty\}\) is defined by
  \[
  (f\ic g)(x)=\inf_{y,z\in\R^n}\{f(y)+g(z), \,y+z=x\}=\inf_{y\in\R^n}\{f(y)+g(x-y)\}.
  \]
  The infimal convolution of \(f\) with \(g\) is said to be \emph{exact at \(x\)} if the infimum is achieved, and \emph{exact} if it is exact everywhere.  
\end{defi}
With this definition, and whenever \(h=t/(1-t)>0\), the greatest function \(H\) in Theorem \ref{th:better_BBL} is given by
\[
H(z)=(1-t)\inf_{y\in\R^n}\left\{g\left(\frac{z}{1-t}-y\right)+hW(y/h) \right\} = (1-t)\left(g\ic hW(./h)\right)(z/(1-t)),
\]
we thus define 
\[
Q_h^W(g)=g\ic hW(./h)=x\mapsto \inf_{y\in\R^n}\{g(x-y)+hW(y/h)\}.
\]
Using \(Q_h^W\) in Theorem \ref{th:better_BBL}, inequality \eqref{eq:better_BBL} becomes
\begin{equation}\label{eq:BBLn}
  \int Q_h^W(g)^{1-n} \geq \int g^{1-n} + h\int W^{1-n}
\end{equation}
but there exists a slightly more general version of this inequality, namely Theorem \ref{th:BBL}, which we will use in section \ref{se:sharp_GNS}.

To begin with, let us first showcase some properties of the infimal convolution.


\subsection{The general infimal convolution}

This subsection is here to build some intuition about infimal convolution, before proving specific results useful for the study of \(Q_h^W\).
\begin{defi}
  With any function \(f:\R^n\to \R\cup\{+\infty\}\), we associate its
  \begin{itemize}
  \item essential domain (usually shortened to domain), \(\dom f=\{x\in\R^n,\,f(x)<+\infty\}\);
  \item epigraph, \(\epi f=\{(x,\alpha)\in \R^n\times\R,\, f(x)\leq \alpha\}\);
  \item strict epigraph, \(\epi_s f=\{(x,\alpha)\in \R^n\times\R,\, f(x)< \alpha\}\).
  \end{itemize}
  Furthermore, the function \(f\) is said to be proper if it is not equal to the constant \(+\infty\).
\end{defi}
With these definitions, we highlight in the next proposition the link between infimal convolution of functions and Minkowski sum of sets, classically defined for two sets $A,B$ by $A+B=\{a+b,\,(a,b)\in A\times B\}$.
\begin{prop}
  Let \(f,g:\R^n\to\R\cup\{+\infty\}\). Then
  \begin{itemize}
  \item \(\dom f\ic g = \dom f+\dom g\);
  \item \(\epi_s f\ic g = \epi_s f + \epi_s g\);
  \item \(\epi f\ic g \supset \epi f + \epi g\), and equality holds if, and only if, the infimal convolution is exact at each \(x\in\dom f\ic g\).
  \end{itemize}
\end{prop}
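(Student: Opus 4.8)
The plan is to unwind each of the three assertions directly from the definition $(f\ic g)(x)=\inf_{y\in\R^n}\{f(y)+g(x-y)\}$, systematically translating a statement about the \emph{value} of $f\ic g$ at a point into a statement about \emph{decompositions} $x=y+z$. For the domain identity, $x\in\dom f\ic g$ means exactly that some $y$ satisfies $f(y)+g(x-y)<+\infty$; since neither $f$ nor $g$ attains $-\infty$, this holds iff $f(y)<+\infty$ and $g(x-y)<+\infty$, i.e. iff $y\in\dom f$ and $x-y\in\dom g$, which is precisely $x\in\dom f+\dom g$. The "easy" inclusions in the second and third items are a single line: if $(y,\beta)\in\epi f$ and $(z,\gamma)\in\epi g$ then $(f\ic g)(y+z)\le f(y)+g(z)\le\beta+\gamma$, whence $(y+z,\beta+\gamma)\in\epi f\ic g$, and this gives $\epi f+\epi g\subset\epi f\ic g$; replacing $\le$ by $<$ throughout gives $\epi_s f+\epi_s g\subset\epi_s f\ic g$.

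For the reverse inclusion in the second item, take $(x,\alpha)$ with $(f\ic g)(x)<\alpha$ and choose $y$ with $f(y)+g(x-y)<\alpha$. Since $\alpha$ is a real number and $f,g$ do not take $-\infty$, both $f(y)$ and $g(x-y)$ are finite, so the slack $s:=\alpha-f(y)-g(x-y)$ is a genuine positive real; setting $\beta:=f(y)+s/2$ and $\gamma:=g(x-y)+s/2$ produces $(y,\beta)\in\epi_s f$, $(x-y,\gamma)\in\epi_s g$ with $\beta+\gamma=\alpha$, so $(x,\alpha)\in\epi_s f+\epi_s g$. Splitting the slack into two positive pieces is the one place where the strictness of the inequalities has to be watched; everything else is bookkeeping.

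It remains to prove the equivalence in the third item. If the infimal convolution is exact at every point of $\dom f\ic g$, take $(x,\alpha)\in\epi f\ic g$: then $x\in\dom f\ic g$, a minimizer $y$ exists with $f(y)+g(x-y)=(f\ic g)(x)\le\alpha$, and the choice $\beta:=\alpha-g(x-y)\ (\ge f(y))$, $\gamma:=g(x-y)$ exhibits $(x,\alpha)=(y,\beta)+(x-y,\gamma)\in\epi f+\epi g$; together with the easy inclusion this gives equality. Conversely, if $\epi f\ic g=\epi f+\epi g$, then for $x\in\dom f\ic g$ the point $(x,(f\ic g)(x))$ decomposes as $(y,\beta)+(z,\gamma)$ with $y+z=x$, $f(y)\le\beta$, $g(z)\le\gamma$, $\beta+\gamma=(f\ic g)(x)$, and the chain $f(y)+g(z)\le\beta+\gamma=(f\ic g)(x)\le f(y)+g(z)$ forces equality, i.e. the infimum is attained at $y$, so the infimal convolution is exact at $x$. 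I do not anticipate any real obstacle; the only subtlety is the finiteness bookkeeping ensuring the slacks are honest real numbers, which tacitly requires that $f\ic g$ never equal $-\infty$ — automatic once $f$ and $g$ are bounded below, as is the case in every application in this paper.
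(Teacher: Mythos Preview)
Your proof is correct: each of the three assertions is unwound directly from the definition, the slack-splitting for the strict-epigraph reverse inclusion is handled carefully, and the equivalence in the third item is argued cleanly in both directions. Your caveat about the tacit assumption $(f\ic g)(x)>-\infty$ is also well placed.

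As for comparison: the paper does not actually give a proof of this proposition but simply refers the reader to Str\"omberg's thesis. Your direct argument is the standard one and supplies what the paper omits.
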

Proof of this proposition and more in-depth details on infimal convolutions can be found in Thomas Strömberg's thesis \cite{stromberg}. The more delicate question of regularity of the infimal convolution is only addressed in subsection \ref{se:regularity} in the particular study of $Q_h^W(g)$. That is because there is not \emph{one} natural set of assumptions ensuring regularity, so it really depends on the goal, which, here, is that \(Q_h^W(g)\) should be smooth enough to prove Sobolev inequalities. We only prove the following lemma in the most general case, since it is very useful.
\begin{lemm}\label{le:exactness}
  Let \(f,g:\R^n\to\R\cup\{+\infty\}\) be lower semicontinuous functions. If \(f\) is nonnegative and \(g\) is coercive, that is,
  \[
  \lim_{\norm x\to+\infty}g(x)=+\infty,
  \]
  then \(f\ic g\) is exact.
\end{lemm}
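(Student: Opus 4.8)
The plan is to fix a point $x \in \R^n$ and show that the infimum defining $(f \ic g)(x) = \inf_{y \in \R^n}\{f(y) + g(x-y)\}$ is attained. If $(f \ic g)(x) = +\infty$ the claim is trivial (any $y$ works, or rather there is nothing to minimise over beyond noting the value is $+\infty$; more carefully, if every $y$ gives $+\infty$ the infimum is achieved by every $y$), so assume $(f \ic g)(x) =: m < +\infty$. The strategy is the direct method of the calculus of variations: take a minimising sequence $(y_k)$ with $f(y_k) + g(x - y_k) \to m$, argue that $(y_k)$ is bounded, extract a convergent subsequence $y_k \to y_*$, and use lower semicontinuity to conclude $f(y_*) + g(x - y_*) \le m$, hence $= m$.

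The key step is the boundedness of the minimising sequence, and this is exactly where the hypotheses enter. Since $f \ge 0$, we have $g(x - y_k) \le f(y_k) + g(x - y_k) \to m$, so $g(x - y_k)$ is bounded above, say by $m + 1$ for $k$ large. By coercivity of $g$, the sublevel set $\{z : g(z) \le m+1\}$ is bounded (if it were unbounded, there would be a sequence $z_j$ in it with $\norm{z_j} \to \infty$ but $g(z_j) \le m+1$, contradicting $\lim_{\norm z \to \infty} g(z) = +\infty$). Hence $x - y_k$ stays in a bounded set, so $(y_k)$ is bounded and we may extract $y_k \to y_*$ along a subsequence.

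For the final step, lower semicontinuity of $f$ gives $f(y_*) \le \liminf_k f(y_k)$, and lower semicontinuity of $g$ together with $x - y_k \to x - y_*$ gives $g(x - y_*) \le \liminf_k g(x - y_k)$. Adding these (and using that $\liminf a_k + \liminf b_k \le \liminf(a_k + b_k)$ when the sums are well-defined, which holds here since all terms are bounded below — $f \ge 0$ and $g$ is bounded below on the relevant bounded set by lower semicontinuity, or one can simply note $f(y_k) = (f(y_k)+g(x-y_k)) - g(x-y_k)$ also converges along a further subsequence), we obtain $f(y_*) + g(x - y_*) \le \liminf_k (f(y_k) + g(x - y_k)) = m$. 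Since $m$ is the infimum, equality holds, so the infimum is attained at $y_*$; as $x$ was arbitrary, $f \ic g$ is exact.

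The main obstacle is the mild bookkeeping around $\liminf$ of a sum of two sequences that are not individually known to converge: one must ensure no $+\infty - \infty$ or $\liminf$-subadditivity failure occurs. This is handled cleanly by first passing to a subsequence along which $f(y_k)$ converges (possibly to $+\infty$, but then $m = +\infty$, contradiction) and then $g(x-y_k) = (f(y_k)+g(x-y_k)) - f(y_k)$ converges as well; both limits are then finite, and the two lower semicontinuity inequalities add without issue.
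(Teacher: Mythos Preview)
Your proof is correct and follows essentially the same approach as the paper: the direct method applied to a minimising sequence, using $f\ge 0$ together with coercivity of $g$ to get boundedness, then lower semicontinuity to pass to the limit. The only cosmetic difference is that the paper treats $\psi(y)=f(x-y)+g(y)$ as a single lower semicontinuous function (a sum of two lsc functions bounded below), so that lower semicontinuity of $\psi$ immediately yields $\psi(y_*)\le\liminf\psi(y_k)$ and the $\liminf$-of-a-sum bookkeeping in your last paragraph becomes unnecessary.
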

\begin{proof}
  Fix \(x\in\R^n\). Consider \(\psi:\R^n\to\R\cup\{+\infty\},\,y\mapsto f(x-y)+g(y)\) and assume that there exists \(y_0\) such that \(\psi(y_0)<+\infty\): \(\psi\) is lower semicontinuous, and greater than \(g\), thus tends to \(+\infty\) as \(\norm y\) goes to \(+\infty\). As such, \(\{y\in\R^n,\psi(y)\leq\psi(y_0)\}\) is closed and bounded, thus compact. Now, let \((y_n)\subset\{\psi\leq\psi(y_0)\}\) be a minimizing sequence, \(\lim_{n\to+\infty}\psi(y_n)=\inf_{y\in\R^n}\{\psi(y)\}\). By compactness, we can assume that the sequence \((y_n)\) converges towards \(z\in\R^n\), and by lower semicontinuity, \(-\infty<\psi(z)\leq \lim_{n\to+\infty}\psi(y_n)=\inf_{y\in\R^n}\{\psi(y)\}\), thus the infimum is finite and is actually a minimum. 
  If such a \(y_0\) does not exist, then \(f\ic g(x)=+\infty\), and the infimum is also reached.
\end{proof}


\subsection{Regularity of the inf-convolution \texorpdfstring{$Q_h^W(g)$}{Q\_h(g)}}\label{se:regularity}

We begin here the specific study of \(Q_h^W(g)=g\ic hW(./h)\). 
The study of the regularity of \(Q_h^W(g)\) with respect to \(h>0\) is crucial, because we would like to differentiate inequality \eqref{eq:BBLn} with respect to \(h\). Let us first state some classical results about the Legendre transform. The proofs can be found in Evans' book, \cite[p.120]{evans}, and Brézis' book, \cite[p.10]{brezis}. 
\begin{defi}
  The Legendre transform of \(W\) is defined by
  \[
  W^*(y)=\sup_{x\in\R^n}\{x\cdot y-W(x)\}\in\overline \R.
  \]
\end{defi}
By definition, \(W^*\) is a lower semicontinuous convex function, but it is not always proper. For \(W^*\) to be well behaved, we have to assume a little bit more about \(W\). In fact, it is enough to assume \(W\) to be lower semicontinuous: indeed, if \(W:\R^n\to \R\cup\{+\infty\}\) is a lower semicontinuous proper convex function, then \(W^*\) is also a lower semicontinuous proper convex function, and \((W^*)^*=W\). The infimal convolution is not only closely related to Minkovski sums, but also to Legendre transforms, as the next lemma shows.
\begin{lemm}\label{le:hamilton_jacobi}
  Let \(g,W:\R^n\to(-\infty,+\infty]\) be two measurable functions.
  If \(g\) is nonnegative and almost everywhere differentiable on its domain \(\dom g=\Omega_0\) (with nonempty interior), and \(W\) grows superlinearly,
  \[
  \lim_{\abs x\to+\infty}\frac{W(x)}{\abs x}=+\infty,
  \]
  then for almost every \(x\in \mathring \Omega_0\), $h\mapsto Q_h^W(g)(x)$ is differentiable at $h=0$, and
  \[
  \left.\frac{\partial}{\partial h}\right\vert_{h=0}Q_h^W(g)(x)=-W^*(\nabla g(x)),
  \]
  where \(W^*\) is the Legendre transform of \(W\).
\end{lemm}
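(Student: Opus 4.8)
The plan is to compute the right-hand derivative of $h\mapsto Q_h^W(g)(x)$ at $h=0$ directly from the definition, for a point $x$ lying in the interior of $\Omega_0=\dom g$ at which $g$ is differentiable; by hypothesis such points form a set of full measure in $\mathring\Omega_0$, so it suffices to treat them. Setting $Q_0^W(g)=g$ and performing the substitution $y=hz$ in the infimum, one gets, for $h>0$,
\[
\frac{Q_h^W(g)(x)-g(x)}{h}=\inf_{z\in\R^n}\left\{\frac{g(x-hz)-g(x)}{h}+W(z)\right\}.
\]
Since $x$ is interior, $x-hz\in\dom g$ for small $h$, and differentiability of $g$ at $x$ gives $\frac{g(x-hz)-g(x)}{h}\to-\nabla g(x)\cdot z$ for each fixed $z$; so the limit should be $\inf_z\{-\nabla g(x)\cdot z+W(z)\}=-W^*(\nabla g(x))$ (this is the infinitesimal Hamilton--Jacobi relation $\partial_h Q_h^W(g)+W^*(\nabla Q_h^W(g))=0$ at $h=0$), and the whole point is to justify exchanging the limit with the infimum.

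For the upper bound I would fix any $z\in\dom W$ (nonempty, $W$ being proper), use $Q_h^W(g)(x)\le g(x-hz)+hW(z)$ together with differentiability at $x$ to obtain $\limsup_{h\to0^+}\frac1h\bigl(Q_h^W(g)(x)-g(x)\bigr)\le-\nabla g(x)\cdot z+W(z)$, and then take the infimum over $z\in\dom W$, which is exactly $-W^*(\nabla g(x))$.

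The lower bound is the crux. First a localization step: using $g\ge0$ to bound $Q_h^W(g)(x)\le g(x-hz_0)+hW(z_0)\to g(x)$ for a fixed $z_0\in\dom W$, and using the superlinear growth of $W$, one shows that for every $\delta>0$ there is $h_\delta>0$ such that, for $0<h<h_\delta$, the infimum defining $Q_h^W(g)(x)$ in the original variable $y$ is unchanged when restricted to $\abs y\le\delta$. On that ball, differentiability at $x$ gives $g(x-y)\ge g(x)-\nabla g(x)\cdot y-\varepsilon(\delta)\abs y$ with $\varepsilon(\delta)\to0$ as $\delta\to0$; substituting this in, undoing $y=hz$, and enlarging the range of the infimum back to all of $\R^n$ yields
\[
\frac{Q_h^W(g)(x)-g(x)}{h}\ge-\bigl(W-\varepsilon(\delta)\abs{\cdot}\bigr)^*(\nabla g(x))\qquad(0<h<h_\delta).
\]
Letting $h\to0^+$ and then $\delta\to0^+$, it remains to see that $\bigl(W-\varepsilon\abs{\cdot}\bigr)^*(\nabla g(x))\to W^*(\nabla g(x))$ as $\varepsilon\to0^+$, which again uses superlinear growth: it confines the supremum defining both Legendre transforms to a bounded set of $z$'s independent of small $\varepsilon$, so the linear perturbation contributes only $O(\varepsilon)$. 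Combined with the upper bound, this proves the lemma.

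The main obstacle is precisely this lower bound: the infimum runs over all $z\in\R^n$, the convergence $\frac{g(x-hz)-g(x)}{h}\to-\nabla g(x)\cdot z$ is merely pointwise in $z$, and $g$ is only a.e.\ differentiable with no modulus of continuity available away from $x$, so one cannot pass to the limit inside naively. The superlinear growth of $W$ together with $g\ge0$ is exactly what pushes the competitors $y=hz$ into an arbitrarily small neighbourhood of the origin as $h\to0^+$, which makes the first-order expansion of $g$ at $x$ usable, and the same growth hypothesis controls the Legendre transform under the perturbation $-\varepsilon\abs{\cdot}$. The quantitative forms of these two confinement estimates are the kind of routine details I would relegate to the appendix on admissibility.
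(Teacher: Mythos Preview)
Your proposal is correct and follows essentially the same approach as the paper: the upper bound is identical, and for the lower bound both you and the paper first localize the competitors to a shrinking neighbourhood of the origin using $g\ge0$ together with the superlinear growth of $W$, insert the first-order expansion of $g$ at $x$, and then control the resulting $\varepsilon\lvert\cdot\rvert$-perturbation of the Legendre transform by invoking superlinearity once more to confine the supremum to a fixed ball. The only cosmetic difference is that the paper carries out this last step by explicitly choosing a radius $R$ (independent of the error parameter) and bounding the infimum by $-W^*(\nabla g(x))-R\eta$, whereas you phrase it as $(W-\varepsilon\lvert\cdot\rvert)^*(\nabla g(x))\to W^*(\nabla g(x))$; these are the same computation.
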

\begin{proof}
  Let \(\Omega_1=\dom W\), and fix \(x\in\mathring\Omega_0\) such that the differential of \(g\) at \(x\) exists. Let \(y\in \Omega_1\). For \(h>0\) sufficiently small, \(x-hy\in \Omega_0\), and we get, by definition of \(Q_h^W(g)\),
  \[
  \frac{Q_h^W(g)(x)-g(x)}{h}\leq \frac{g(x-hy)-g(x)}{h}+W(y).
  \]
  Taking the superior limit when \(h\to 0\) yields
  \[
  \limsup_{h\to 0}\frac{Q_h^W(g)(x)-g(x)}{h}\leq -\nabla g(x)\cdot y+W(y).
  \]
  This being true for any \(y\in\Omega_1\), we may take the infimum to find that
  \[
  \limsup_{h\to 0}\frac{Q_h^W(g)(x)-g(x)}{h} \leq -W^*(\nabla g(x)).
  \]
  Conversely, fix \(e\in \Omega_1\), and \(h_0>0\) such that \(\overline{ B(x,h_0\norm e)}\in\mathring\Omega_0\). For \(h\in(0,h_0)\), define
  \[
  \Omega_{x,h}=\{y\in \Omega_1,\, hW(y)\leq g(x-he)+hW(e)\};
  \]
  note that \(e\in\Omega_{x,h}\). We claim that \(\limsup_{h\to0}\{h\norm y,\,y\in \Omega_{x,h}\}=0\). Indeed, if \(y\in \Omega_{x,h}\), then
  \[
  h\norm y\frac{W(y)}{\norm y}\leq g(x-he)+hW(e)\leq \sup_{z\in\overline{B(x,h_0\norm e)}} g(z)+h_0W(e).
  \]
  Now, when \(h\) goes to \(0\), either \(\limsup\norm y< +\infty\), or \(\limsup\norm y= +\infty\); in both cases, since \(\lim_{\abs y\to+\infty}\frac{W(y)}{\norm y}=+\infty\), the claim is proved. Notice now that for all \(h\in(0,h_0)\), \(Q_h^W(g)(x)\leq g(x-he)+hW(e)\), hence \(Q_h^W(g)(x)=\inf_{y\in \Omega_{x,h}}\{\dots\}\). Thus,
  \begin{align*}
    \frac{Q_h^W(g)(x)-g(x)}{h}&=\inf_{y\in \Omega_{x,h}}\left\{\frac{g(x-hy)-g(x)}{h}+W(y)\right\}\\
    &=\inf_{y\in \Omega_{x,h}}\left\{-\nabla g(x)\cdot y+y\cdot\eps_x(hy)+W(y)\right\}
  \end{align*}
  where \(\eps_x(z)\to 0\) when \(\norm z\to 0\). Let \(1\geq\eta>0\); the claim proves that there exists \(h_\eta\in(0,h_0)\) such that for all \( 0 < h < h_\eta\), \(\forall y\in \Omega_{x,h}\), \(\norm{\eps_x(hy)}\leq \eta\). Thus,
  \begin{align*}
    \frac{Q_h^W(g)(x)-g(x)}{h}&\geq\inf_{y\in \Omega_{x,h}}\left\{-\nabla g(x)\cdot y-\eta\norm y+W(y)\right\}\\
    &=\inf_{\substack{y\in \Omega_{x,h}\\y\in B(0,R)}}\{\dots\}\\
    &\geq \inf_{y\in \Omega_{x,h}}\{-\nabla g(x)\cdot y+W(y)\}-R\eta\\
    &\geq -W^*(\nabla g(x))-R\eta,
  \end{align*}
  where \(R\) was chosen such that \(\norm y\geq R \implies W(y)\geq (\norm{\nabla g(x)}+1)\norm y+W(e)-\nabla g(x)\cdot e\). Finally, taking the inferior limit of this inequality, and noticing that the result stays true for any \(0<\eta\leq 1\), we may conclude (since \(R\) is independent from \(\eta\)) that
  \[
  \lim_{h\to 0}\frac{Q_h^W(g)(x)-g(x)}{h}= -W^*(\nabla g(x)).
  \]
\end{proof}

This differentiation result is enough to prove the main theorems contained in section \ref{se:sharp_GNS}, but we can go a little bit further with more assumptions on \(g\) and \(W\). Assuming \(W\) to be convex bestows upon \(Q_h^W\) a semigroup structure:
\begin{lemm}\label{le:semigroup}
  Assume that \(g:\R^n\to[0,+\infty]\) is lower semicontinuous, and that \(W\) is a lower semicontinuous proper convex function such that \(\lim_{\norm x\to+\infty}W(x)=+\infty\). Then, for all \(x\in\R^n\) and \(0<s<h\),
  \begin{align*}
    Q_h^W(g)(x)&=\min_{y\in\R^n}\{g(x-hy)+hW(y)\}\\
    &= Q_{h-s}^W(Q_s^W(g))(x).
  \end{align*}  
\end{lemm}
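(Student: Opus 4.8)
The plan is to establish the two equalities in Lemma~\ref{le:semigroup} separately. The first equality, $Q_h^W(g)(x)=\min_{y\in\R^n}\{g(x-hy)+hW(y)\}$, is a matter of rewriting and then invoking exactness. By definition $Q_h^W(g)(x)=\inf_{z}\{g(x-z)+hW(z/h)\}$, and the substitution $y=z/h$ turns this into $\inf_y\{g(x-hy)+hW(y)\}$. To upgrade the infimum to a minimum I would apply Lemma~\ref{le:exactness}: the map $y\mapsto g(x-hy)$ is lower semicontinuous and nonnegative (since $g\geq 0$), and the map $y\mapsto hW(y)$ is lower semicontinuous and coercive, because $W$ is lower semicontinuous and $\lim_{\norm x\to+\infty}W(x)=+\infty$ (and $h>0$). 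Strictly speaking Lemma~\ref{le:exactness} is stated for $f\ic g$ with the arguments summing to $x$; here one writes $Q_h^W(g)(x)$ as the infimal convolution of $y\mapsto g(-y)$ dilated appropriately with $y\mapsto hW(y/h)$ evaluated at $x$, or simply reruns the one-line compactness argument from that lemma's proof directly for the function $\psi(y)=g(x-hy)+hW(y)$. Either way the infimum is attained.

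For the semigroup identity, the inequality $Q_h^W(g)\geq Q_{h-s}^W(Q_s^W(g))$ is the easy direction and follows from convexity of $W$. Given $x$, pick $y$ achieving the minimum for $Q_h^W(g)(x)$, so $Q_h^W(g)(x)=g(x-hy)+hW(y)$. Write $hy=(h-s)y+sy$ — more precisely, decompose the displacement as the sum of a displacement of size $h-s$ along the vector $y$ and one of size $s$ along the same vector $y$: then $g(x-hy)+hW(y)=\bigl[g((x-(h-s)y)-sy)+sW(y)\bigr]+(h-s)W(y)\geq Q_s^W(g)(x-(h-s)y)+(h-s)W(y)\geq Q_{h-s}^W(Q_s^W(g))(x)$, using the definition of $Q_s^W$ and then of $Q_{h-s}^W$ with the particular choice $y$. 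No convexity of $W$ is even needed for this direction; convexity enters only in the reverse inequality.

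For the reverse inequality $Q_h^W(g)\leq Q_{h-s}^W(Q_s^W(g))$, I would unfold both outer infima. By the first part (applied at the parameters $h-s$ and $s$, both positive, noting $Q_s^W(g)$ is still nonnegative and lower semicontinuous — lower semicontinuity of the inf-convolute of two l.s.c.\ functions one of which is coercive is standard, or follows since the inf is attained), there exist $z$ and $w$ with
\[
Q_{h-s}^W(Q_s^W(g))(x) = Q_s^W(g)(x-(h-s)z) + (h-s)W(z) = g\bigl(x-(h-s)z - sw\bigr) + sW(w) + (h-s)W(z).
\]
Now set $y=\frac{h-s}{h}z+\frac{s}{h}w$, a convex combination, so that $hy=(h-s)z+sw$ and hence $x-hy = x-(h-s)z-sw$. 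Convexity of $W$ gives $hW(y)=hW\bigl(\tfrac{h-s}{h}z+\tfrac{s}{h}w\bigr)\leq (h-s)W(z)+sW(w)$. Therefore $Q_h^W(g)(x)\leq g(x-hy)+hW(y)\leq g(x-(h-s)z-sw)+(h-s)W(z)+sW(w)=Q_{h-s}^W(Q_s^W(g))(x)$, which is the desired inequality. Combining the two directions yields the claimed identity.

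The main obstacle is bookkeeping rather than depth: one must be careful that all the infima in sight are genuinely attained (so that "pick a minimizer" is legitimate in both directions), which requires checking that $Q_s^W(g)$ inherits the hypotheses (nonnegativity is clear; lower semicontinuity needs the attainment/compactness argument, and for $Q_{h-s}^W$ applied to it one also wants the coercivity of $W$ again, which is unchanged). A cleaner alternative that sidesteps attainment entirely is to prove both inequalities with $\inf$ in place of $\min$: the reverse direction then becomes, for every $\delta>0$, choosing $z,w$ that are $\delta$-almost-minimizers and running the same convex-combination estimate, letting $\delta\to0$ at the end. I would likely present the attainment-based version since Lemma~\ref{le:exactness} has already done the work, but flag the $\inf$-version as the fallback if one is uneasy about lower semicontinuity of the intermediate function.
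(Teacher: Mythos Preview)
Your proof is correct and follows essentially the same route as the paper: both directions are handled by the same convex-combination trick, with convexity of \(W\) used exactly for the inequality \(Q_h^W(g)\leq Q_{h-s}^W(Q_s^W(g))\). There is one small organizational difference worth noting. For that direction, the paper fixes an arbitrary \(y\), uses exactness only of the \emph{inner} infimum \(Q_s^W(g)(x-(h-s)y)\) (which follows directly from Lemma~\ref{le:exactness} applied to \(g\)), runs the convexity estimate, and then takes the infimum over \(y\) at the end. This avoids ever needing to know that \(Q_s^W(g)\) is itself lower semicontinuous, which is precisely the technical point you flagged as an obstacle. Your version, by picking minimizers at both the outer and the inner level, forces you to verify lsc of the intermediate function (or to fall back on the \(\delta\)-almost-minimizer argument you sketched). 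Either fix works, but the paper's ordering sidesteps the issue entirely. One minor slip: you first write that the easy direction ``follows from convexity of \(W\)'' and then correctly observe that convexity is not needed there; you should drop the first remark.
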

\begin{proof}
  Exactness was already proved in Lemma \ref{le:exactness}.
  Notice that
  \begin{align*}
    Q_{h-s}^W(Q_s^W(g))(x)&=\inf_{y\in\R^n}\inf_{z\in\R^n}\{g(x-(h-s)y-sz)+(h-s)W(y)+sW(z)\}\\
    &\leq \inf_{y\in\R^n}\{g(x-hy)+hW(y)\}=Q_h^W(g)(x).
  \end{align*}
  Conversely, let \(y\in\R^n\), and choose \(z\in\R^n\) such that
  \[
  Q_s^W(g)(x-(t-s)y)=g(x-sz)+sW(z).
  \]
  Then, by convexity,
  \begin{align*}
    Q_t^W(g)(x)&\leq g(x-(t-s)y-sz)+tW\left(\frac{t-s}{t}y+\frac{s}{t}z\right)\\
    &\leq g(x-(t-s)y-sz)+(t-s)W(y)+sW(z)\\
    &= (t-s)W(y) + Q_s^W(g)(x-(t-s)y).
  \end{align*}
  Taking the infimum over \(y\in\R^{n}\) proves that \(Q_t^W(g)(x)\leq Q_{h-s}^W(Q_s^W(g))(x)\), and thus there is equality.
\end{proof}

We want to investigate if some kind of regularity is preserved under the operation of infimal convolution. The answer is yes, under certain specific conditions. We will also provide an example showcasing regularity loss, emphasizing the delicate nature of this question. Work on this subject already exists, notably in Evans' book \cite[p.~128]{evans}, where there is a global Lipschitz assumption, or in Villani's book \cite[Theorem 30.30]{villani2009}, where functions are bounded. However, such assumptions are at odds with the goals we aim for  here, as ultimately, we want \(g^{-\alpha}\) to be integrable for some exponant \(\alpha>0\).

Let us study the case where \(g\) and \(W\) are finite \emph{everywhere}. 
\begin{lemm}\label{le:full_lipschitz_continuity}
  Let \(g,W:\R^n\to\R\). If \(g\) is nonnegative, locally Lipschitz continuous, and \(W\) is convex and coercive, then \((h,x)\mapsto Q_h^W(g)\) is locally Lipschitz continuous.
\end{lemm}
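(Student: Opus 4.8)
The plan is to fix an arbitrary compact set $K\subset(0,\infty)\times\R^n$ and to prove that $(h,x)\mapsto Q_h^W(g)(x)$ is Lipschitz on $K$ with a constant depending only on $K$; this is exactly local Lipschitz continuity. Throughout I use the form $Q_h^W(g)(x)=\inf_{y\in\R^n}\{g(x-hy)+hW(y)\}$ from Lemma \ref{le:semigroup}. Two standard facts will be used without further comment: a nonnegative locally Lipschitz $g$ is continuous (hence lower semicontinuous), and a convex function $W$ that is finite everywhere on $\R^n$ is automatically locally Lipschitz, in particular continuous and bounded on compact sets. Moreover, after the change of variables $z=hy$, $Q_h^W(g)=g\ic hW(\cdot/h)$, and $hW(\cdot/h)$ is coercive because $W$ is; so Lemma \ref{le:exactness} applies and the infimum defining $Q_h^W(g)(x)$ is attained. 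I denote by $y_{h,x}$ a minimizer, so that $Q_h^W(g)(x)=g(x-hy_{h,x})+hW(y_{h,x})$.

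The first real step is a \emph{uniform bound on the minimizers}. Using the competitor $y=0$ together with $g\geq 0$,
\[
hW(y_{h,x})\leq Q_h^W(g)(x)\leq g(x)+hW(0).
\]
On $K$ one has $h\geq h_0:=\min\{h:(h,x)\in K\}>0$, and $g$ is bounded on the (compact) $x$-projection of $K$, so $W(y_{h,x})\leq M_K:=h_0^{-1}\sup_K g+W(0)$ for every $(h,x)\in K$. Since $W$ is coercive, its sublevel set $\{W\leq M_K\}$ is bounded, whence there is $R_K$ with $\abs{y_{h,x}}\leq R_K$ for all $(h,x)\in K$. Consequently every point of the form $x-hy_{h,x}$ with $(h,x)\in K$, and more generally $x-h'y_{h,x}$ with $h'$ in the $h$-projection of $K$, lies in a fixed compact ball $\overline{B(0,\rho_K)}$. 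Let $L_K$ be a Lipschitz constant of $g$ on $\overline{B(0,\rho_K)}$ and $C_W:=\sup_{\overline{B(0,R_K)}}\abs W$.

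For the Lipschitz estimate itself, take $(h_1,x_1),(h_2,x_2)\in K$ and set $y_1:=y_{h_1,x_1}$. Using $y=y_1$ as a competitor for $(h_2,x_2)$,
\[
Q_{h_2}^W(g)(x_2)-Q_{h_1}^W(g)(x_1)\leq\bigl(g(x_2-h_2y_1)-g(x_1-h_1y_1)\bigr)+(h_2-h_1)W(y_1).
\]
Both arguments of $g$ lie in $\overline{B(0,\rho_K)}$, so the first bracket is at most $L_K\abs{(x_2-x_1)-(h_2-h_1)y_1}\leq L_K\bigl(\abs{x_2-x_1}+R_K\abs{h_2-h_1}\bigr)$, while $\abs{W(y_1)}\leq C_W$; hence
\[
Q_{h_2}^W(g)(x_2)-Q_{h_1}^W(g)(x_1)\leq L_K\abs{x_2-x_1}+(L_KR_K+C_W)\abs{h_2-h_1}.
\]
Exchanging the two points (the minimizer for $(h_2,x_2)$ also lies in $\overline{B(0,R_K)}$ since $(h_2,x_2)\in K$) gives the reverse inequality, so the absolute value is controlled by the same right-hand side, with constant depending only on $K$. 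As $K$ was arbitrary, $(h,x)\mapsto Q_h^W(g)(x)$ is locally Lipschitz continuous.

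The step I expect to be delicate is the uniform control of the minimizers on compact sets: this is where coercivity of $W$ is essential, and it is also where the restriction to $h$ bounded away from $0$ enters (near $h=0$ the bound $W(y_{h,x})\leq M_K$ degrades, and indeed uniform continuity up to $h=0$ genuinely fails, e.g. for $W=\norm\cdot$ and $g$ not globally $1$-Lipschitz). Everything else is a routine bookkeeping of constants.
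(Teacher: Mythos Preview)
Your argument is correct on the open domain $(0,\infty)\times\R^n$, and it is genuinely different from the paper's. The paper works in the shifted variable $z=hy$, localizing the minimizers of $g(x-z)+hW(z/h)$ in a ball whose radius stays bounded as $h\downarrow 0$; it then proves Lipschitz continuity in $x$ and, separately, the estimate $\abs{Q_h^W(g)(x)-g(x)}\leq Ch$ via the Legendre transform, finally patching the $h$-variable together through the semigroup property $Q_{h+s}^W=Q_h^W\circ Q_s^W$. Your route is more elementary: you localize the minimizers in the unscaled variable $y$, and a single competitor argument handles $h$ and $x$ jointly, with no use of the semigroup property or of $W^*$.

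What each approach buys is exactly what you identify in your closing remark. Your competitor bound $W(y_{h,x})\leq h^{-1}g(x)+W(0)$ degenerates as $h\downarrow 0$, so your proof covers only $h$ bounded away from $0$. The paper aims to include $h=0$: its key estimate $\abs{Q_h^W(g)-g}\leq Ch$ uses $C=\max\{W(0),\sup_{B(0,\lambda)}W^*\}$ with $\lambda$ the local Lipschitz constant of $g$. However, this constant is finite only when $W^*$ is finite everywhere, i.e.\ when $W$ grows superlinearly, not merely when $W$ is coercive; your example $W=\norm{\cdot}$ (where $Q_h^W(g)$ is independent of $h>0$ and generically jumps at $h=0$) shows the distinction is sharp. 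So the paper's proof tacitly uses superlinearity at this step, which is precisely the extra hypothesis added in the subsequent Hamilton--Jacobi proposition. Under the lemma's stated hypotheses, your proof on $(0,\infty)\times\R^n$ is the honest conclusion.
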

\begin{proof}
  In order to prove the full local Lipschitz continuity, we must first localize the \(\operatorname{arginf}\) of the infimal convolution. Fix \(\rho>0\), \(\eta>0\), and let \(x,x'\in B(0,\rho)\) and \(0< h< \eta\).
  Consider the set
  \[
  \Omega_{x,h}\coloneqq\{y\in\R^n, g(x-y)+hW(y/h)\leq g(x)+hW(0)\}.
  \]
  We claim that, by positivity of \(g\), and convexity of \(W\), the set is bounded. Indeed, since \(W\) is convex and coercive, there exists \(R>0\) and \(m>0\) such that 
  \[
  \norm y >R \implies W(y)\geq m\norm y.
  \]
  If \(y\in\Omega_{x,h}\), then either \(\norm y\leq hR \leq \eta R\), or \(\norm y > hR\) and then \(g(x)+hW(0) \geq hW(y/h) \geq m\norm y\). Invoking continuity of \(g\), we may prove the claim, and conclude that there exists \(R_{\rho,\eta}\), independent from \(x\) and \(h\), such that \(\Omega_{x,h}\subset B(0,R_{\rho,\eta})\). 

  Let us now prove the local Lipschitz continuity with respect to \(x\). The functions \(g\) and \(W\) are assumed continuous, and so the infimal convolution is exact, and there exists \(y\in\R^{n}\) such that \(Q_h^W(g)(x)=g(x-y)+hW(y/h)\). Necessarily, \(\norm y \leq R_{\rho,\eta}\), so
  \begin{align*}
    Q_h^W(g)(x')-Q_h^W(g)(x) &= \inf_{y'\in\R^n}\left\{g(x'-y')+hW(y'/h)\right\}-g(x-y)-hW(y/h)\\
    &\leq g(x'-y)-g(x-y) \\
    &\leq \left(\lip_{B(0,\rho+R_{\rho,\eta})}g\right)\norm{x-x'},
  \end{align*}
  where \(\lip_A f\coloneqq\sup_{x\neq x'\in A}\{\abs{f(x)-f(x')}/\norm{x-x'}\}\). By symmetry, we conclude that 
  \[
  \abs{Q_h^W(g)(x')-Q_h^W(g)(x)}\leq \left(\lip_{B(0,\rho+R_{\rho,\eta})}g\right)\norm{x-x'},
  \]
  hence the local Lipschitz continuity with respect to \(x\).
  
  Now,
  \begin{align*}
    Q_h^W(g)(x)-g(x)&=\inf_{y\in B(0,R_{\rho,n})}\{g(x-y)-g(x)+hW(y/h)\} \\
    &\geq  \inf_{y\in B(0,R_{\rho,\eta})}\left\{-(\lip_{B(0,\rho+R_{\rho,\eta})}g)\norm{y}+hW(y/h)\right\}\\
    &=h\inf_{z\in B(0,R_{\rho,\eta}/h)}\left\{-\lambda\norm z +W(z)\right\}\\
    &\geq -h\sup_{z\in \R^n}\left\{\lambda\norm z -W(z)\right\}\\
    &\geq -h \sup_{t\in B(0,\lambda)} W^*(t),
  \end{align*}
  where \(\lambda=\lip_{B(0,\rho+R_{\rho,\eta})}g\). Conversely, by definition,
  \[
  Q_h^W(g)(x)-g(x)\leq hW(0),
  \]
  and thus \(\abs{Q_h^W(g)(x)-g(x)}\leq Ch\), where \(C=\max\{W(0),\sup_{t\in B(0,\lambda)} W^*(t)\}\). Note that $C$ is finite because $W^*$ is, by definition, convex and finite on $\R^n$, thus continuous. Finally, using the semigroup property \(Q_{h+s}^W(g)=Q_h^W(Q_s^W(g))\) and the fact that the Lipschitz constant with respect to \(x\) is uniformly bounded by \(\lip_{B(0,\rho+R_{\rho,\eta})}\) for \(0<h<\eta\), we may conclude for the full local Lipschitz continuity.
\end{proof}

The above lemma is a slight generalization of the following proposition:
\begin{prop}\label{pr:lipschitz_continuity}
  Let \(f,g:\R^n\to\R\) be lower semicontinuous functions. If \(f\) is nonnegative, locally Lipschitz continuous, and \(g\) is coercive, then \(f\ic g\) is locally Lipschitz continuous.
\end{prop}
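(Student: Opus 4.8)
The plan is to reuse the skeleton of the proof of Lemma~\ref{le:full_lipschitz_continuity}: this proposition is that lemma stripped of the parameter $h$ and of the convexity hypothesis on one factor, neither of which is actually used for the Lipschitz bound, so the argument is a shortened copy. Fix $\rho>0$ and work on the ball $B(0,\rho)$. Since $f$ is (locally Lipschitz, hence) continuous, it is bounded on the compact set $\overline{B(0,\rho)}$, say $\sup_{\overline{B(0,\rho)}}f\le M$. For $x\in B(0,\rho)$ we have the crude bound $(f\ic g)(x)\le f(x)+g(0)\le M+g(0)$, so the infimum defining $(f\ic g)(x)$ is unchanged if we restrict $y$ to
\[
\Omega_x=\bigl\{y\in\R^n:\ f(x-y)+g(y)\le M+g(0)\bigr\}.
\]
Because $f\ge 0$, every $y\in\Omega_x$ satisfies $g(y)\le M+g(0)$; coercivity of $g$ then yields a radius $R_\rho$, depending only on $\rho$ and $g$ (not on $x$), with $\Omega_x\subset B(0,R_\rho)$ for all $x\in B(0,\rho)$. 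By Lemma~\ref{le:exactness} — applicable because $f$ is nonnegative lsc and $g$ is coercive lsc — the infimal convolution is exact, so for each $x\in B(0,\rho)$ there is $y_x\in\Omega_x\subset B(0,R_\rho)$ with $(f\ic g)(x)=f(x-y_x)+g(y_x)$.

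Given this localization, the Lipschitz estimate falls out at once. For $x,x'\in B(0,\rho)$, using $y_x$ as an admissible (if suboptimal) choice for $x'$,
\[
(f\ic g)(x')-(f\ic g)(x)\le \bigl(f(x'-y_x)+g(y_x)\bigr)-\bigl(f(x-y_x)+g(y_x)\bigr)=f(x'-y_x)-f(x-y_x).
\]
Both $x-y_x$ and $x'-y_x$ lie in $B(0,\rho+R_\rho)$, on which $f$ is Lipschitz with some finite constant $L_\rho=\lip_{B(0,\rho+R_\rho)}f$; hence the right-hand side is at most $L_\rho\norm{x-x'}$. Since $y_{x'}\in B(0,R_\rho)$ as well, exchanging $x$ and $x'$ gives the reverse bound, so $\abs{(f\ic g)(x')-(f\ic g)(x)}\le L_\rho\norm{x-x'}$ on $B(0,\rho)$. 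As $\rho$ was arbitrary, $f\ic g$ is locally Lipschitz continuous. (One also checks painlessly that $\dom(f\ic g)=\dom f+\dom g=\R^n$ and $f\ic g\ge\inf_{\R^n}g>-\infty$, so $f\ic g$ is real-valued and the statement is not vacuous.)

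The one delicate point, and the heart of the matter, is the \emph{uniformity} of the radius $R_\rho$: the arginf of the infimal convolution must be confined to a ball that depends only on the base ball $B(0,\rho)$ and not on the individual point $x$. This is precisely what the asymmetric hypotheses provide — nonnegativity and local boundedness of $f$ on one side, coercivity of $g$ on the other — and it is the reason $f$ and $g$ cannot be interchanged in the statement. Everything downstream (exactness via Lemma~\ref{le:exactness}, and the transfer of $f$'s local Lipschitz constant to $f\ic g$) is then routine.
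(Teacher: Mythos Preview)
Your proof is correct and follows essentially the same approach as the paper, which presents this proposition as the $x$-variable half of Lemma~\ref{le:full_lipschitz_continuity}'s proof with the convexity hypothesis (needed only for the $h$-direction) discarded. Your use of the uniform bound $M+g(0)$ in place of the paper's $g(x)+hW(0)$ (later made uniform via continuity) is a mild streamlining but not a genuine departure.
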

Here, we do not need any convexity assumption, which was only used to prove Lipschitz continuity with respect to the $(n+1)$th variable, $h$. Also, note here that it is important for \(f\) and \(g\) to be finite \emph{everywhere}, which will not be the case in sections \ref{se:sharp_GNS} and appendix \ref{se:admissibility}. In order for $f\ic g$ to be locally Lipschitz continuous, further assumptions are needed on \(f\) and \(g\), in particular on their domain. For example, if \(\dom f=\{x_0\}\), then \(f\ic g=f(x_0)+g(\,.-x_0)\), so it already seems necessary that both \(f\) and \(g\) be at least locally Lipschitz continuous. However, this is not sufficient. Consider for example the following functions \(f\) and \(g\), defined on \(\R^2\) by
\[
f(x_1,x_2)=\begin{cases} 1 &\text{if }x_1\in[0,1],\,x_2=0,  \\ 
  1-x_2 &\text{if } x_1=0,\,x_2\in[0,1], \\ 
  +\infty &\text{otherwise,} \end{cases}\quad\text{and}\quad 
g(x_1,x_2)=\begin{cases} 0 &\text{if }x_1\in[0,1],\,x_2=0,  \\
  +\infty &\text{otherwise,} \end{cases}
\]
then
\[
(f\ic g)(x_1,x_2)=\begin{cases} 1&\text{if } x_1\in(0,1],\,x_2\in[0,1], \\
  1-x_2 &\text{if } x_1=0,\,x_2\in[0,1], \\
  0 &\text{if } x_1=0,\,x_2\in[1,2], \\
  +\infty &\text{otherwise} \end{cases}
\]
is not a continuous function. This example can easily be adapted to obtain a discontinuous infimal convolution for smooth functions \(f\) and \(g\). We conjecture that if the domain is assumed convex, and if both functions are Lipschitz continuous, and their domain is of non-empty interior, then their infimal convolution is Lipschitz continuous.

Lemma \ref{le:full_lipschitz_continuity}, together with Lemma \ref{le:hamilton_jacobi} and Rademacher's theorem, prove the following proposition:
\begin{prop}[Hamilton-Jacobi]\label{pr:hamilton_jacobi}
  Let \(g,W:\R^n\to\R\). If \(g\) is nonnegative, locally Lipschitz continuous, and \(W\) is convex and  grows superlinearly,
  \[
  \lim_{\abs x\to+\infty}\frac{W(x)}{\abs x}=+\infty,
  \]
  then, for almost every \(h\geq 0\) and \(x\in\R^n\),
  \[
  \frac{\partial}{\partial h}Q_h^W(g)(x)=-W^*(\nabla Q_h^Wg(x)).
  \]  
\end{prop}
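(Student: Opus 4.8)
The plan is to combine the three ingredients explicitly named in the statement: the semigroup property (Lemma \ref{le:semigroup}), the one-sided/zero-time differentiation formula (Lemma \ref{le:hamilton_jacobi}), and Rademacher's theorem, packaged through the regularity obtained in Lemma \ref{le:full_lipschitz_continuity}. First I would invoke Lemma \ref{le:full_lipschitz_continuity}: since $g$ is nonnegative and locally Lipschitz, $W$ is convex and (growing superlinearly, hence) coercive, the map $(h,x)\mapsto Q_h^W(g)(x)$ is locally Lipschitz continuous on $[0,+\infty)\times\R^n$. By Rademacher's theorem it is therefore differentiable at almost every point $(h,x)$, and in particular, by Fubini, for a.e. $h\geq 0$ the function $x\mapsto Q_h^W(g)(x)$ is differentiable at a.e. $x\in\R^n$, and the partial derivative $\partial_h Q_h^W(g)(x)$ exists for a.e. $(h,x)$.

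Next I would fix such a "good" value $h_0>0$, at which $g_{h_0}\coloneqq Q_{h_0}^W(g)$ is locally Lipschitz (being a section of a locally Lipschitz function) and differentiable a.e.; it is also nonnegative since $g\geq 0$ and $W$ can be taken $\geq 0$ after the harmless normalization $W\geq W(0)$, or one simply notes $Q_h^W(g)\ge 0$ directly. Then the semigroup identity of Lemma \ref{le:semigroup}, written as $Q_{h_0+s}^W(g) = Q_s^W(g_{h_0})$ for $s\geq 0$, lets me apply Lemma \ref{le:hamilton_jacobi} to the function $g_{h_0}$ in place of $g$: for a.e. $x$,
\[
\left.\frac{\partial}{\partial s}\right\vert_{s=0} Q_s^W(g_{h_0})(x) = -W^*(\nabla g_{h_0}(x)) = -W^*\big(\nabla Q_{h_0}^W(g)(x)\big).
\]
On the other hand $\left.\partial_s\right\vert_{s=0} Q_{h_0+s}^W(g)(x) = \partial_h Q_h^W(g)(x)\big\vert_{h=h_0}$ whenever the latter exists, which holds for a.e. $x$ by the previous paragraph. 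Combining the two displays gives $\partial_h Q_h^W(g)(x) = -W^*(\nabla Q_h^W(g)(x))$ for a.e. $x$, and since this holds for a.e. $h_0$, it holds for a.e. $(h,x)$, which is the claim.

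The main obstacle is bookkeeping the null sets correctly so that the pointwise identities line up: one needs that for a.e. $h_0$, simultaneously (i) $x\mapsto Q_{h_0}^W(g)(x)$ is locally Lipschitz and differentiable a.e.\ in $x$ (so Lemma \ref{le:hamilton_jacobi} applies to it, using that $W^*$ is well behaved since $W$ is l.s.c.\ convex proper), (ii) the a.e.-in-$x$ set where $\partial_h Q^W_h(g)(x)\vert_{h=h_0}$ exists and where the full differential of $(h,x)\mapsto Q^W_h(g)$ exists is the relevant one, and (iii) these two null sets in $x$ can be taken to be the same. This is where a Fubini argument on the Rademacher null set of the locally Lipschitz function $(h,x)\mapsto Q_h^W(g)(x)$ is needed, together with the a priori local Lipschitz regularity in $x$ of each section $g_{h_0}$, which is what makes Lemma \ref{le:hamilton_jacobi} legitimately applicable fiberwise. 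Once the null sets are handled, the identity itself is immediate from the semigroup law.
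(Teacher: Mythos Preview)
Your proposal is correct and follows exactly the route the paper indicates: the paper's entire ``proof'' is the sentence preceding the proposition, namely that Lemma~\ref{le:full_lipschitz_continuity}, Lemma~\ref{le:hamilton_jacobi}, and Rademacher's theorem together yield the result. You have simply fleshed out what that sentence means, and in particular you correctly identify that the semigroup identity (Lemma~\ref{le:semigroup}) is the bridge that transports the $h=0$ derivative formula of Lemma~\ref{le:hamilton_jacobi} to an arbitrary base time $h_0$; the paper leaves this implicit.
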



\subsection{An equivalent formulation of the classical Borell-Brascamp-Lieb inequality}

In this subsection, we prove an interesting equivalence between the classical Borell-Brascamp-Lieb inequality and its differentiated expression, as announced in the introduction. It is also a good presentation of what is to come in the following sections.
\begin{prop}
  Let \(g,W:\R^n\to\R\). If \(g\) is nonnegative, locally Lipschitz continuous, and \(W\) is convex and grows superlinearly,
  \[
  \lim_{\abs x\to+\infty}\frac{W(x)}{\abs x}=+\infty,
  \]
  and are such that \(\int g^{-n}=\int W^{-n}=1\), and if $(g,W)$ is admissible in the sense of Definition \ref{de:admissibility}, then the following statements are equivalent:
  \begin{enumerate}
  \item The Borell-Brascamp-Lieb inequality holds: for every \(t\in[0,1]\) and \(H:\R^n\to\R\) such that
    \[
    \forall x,y\in\R^n,\quad H((1-t)x+ty)\leq (1-t)g(x)+tW(y),
    \]
    there holds
    \[
    \int H^{-n} \geq 1.
    \]
  \item The following inequality stands:
    \begin{equation*}
      \int \frac{W^*(\nabla g)}{g^{n+1}}\geq 0.
    \end{equation*}
  \end{enumerate}
\end{prop}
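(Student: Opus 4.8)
\emph{Proof strategy.} The plan is to read both statements off a single function of one real variable, exploiting the semigroup structure of $Q_h^W$. Put $\Phi(h)=\int Q_h^W(g)^{-n}$ for $h\ge0$. First I would recast statement (a): for $t\in(0,1)$ the pointwise largest $H$ compatible with the constraint $H((1-t)x+ty)\le(1-t)g(x)+tW(y)$ is $z\mapsto(1-t)\,Q_h^W(g)\big(z/(1-t)\big)$ with $h=t/(1-t)$, and a linear change of variables gives $\int H^{-n}=\Phi(h)$; since every admissible $H$ lies below this maximal one and $H\mapsto\int H^{-n}$ is order-reversing, (a) is equivalent to ``$\Phi(h)\ge1$ for all $h\ge0$'' (the boundary cases $t=0,1$ reducing to the normalizations; in particular $\Phi(0)=\int g^{-n}=1$). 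Next I would record the differentiation formula for $\Phi$: by Lemma~\ref{le:full_lipschitz_continuity} the map $(h,x)\mapsto Q_h^W(g)$ is locally Lipschitz, and combining the semigroup identity (Lemma~\ref{le:semigroup}) with Lemma~\ref{le:hamilton_jacobi} applied at time $0$ to $Q_{h_0}^W(g)$ gives the Hamilton--Jacobi equation $\partial_hQ_h^W(g)=-W^*(\nabla Q_h^W(g))$ for a.e.\ $(h,x)$ (Proposition~\ref{pr:hamilton_jacobi}). The role of the admissibility hypothesis (Definition~\ref{de:admissibility}) is precisely to license differentiation under the integral sign, so that $\Phi$ is locally Lipschitz on $[0,\infty)$ with
\[
\Phi'(h)=n\int\frac{W^*\big(\nabla Q_h^W(g)\big)}{Q_h^W(g)^{n+1}}\qquad\text{for a.e. }h\ge0.
\]

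Given this, the implication (a)$\Rightarrow$(b) is immediate: (a) forces $\Phi(h)\ge1=\Phi(0)$, hence $\Phi'(0^+)\ge0$, and evaluating the displayed formula at $h=0$ turns this into $\int W^*(\nabla g)\,g^{-n-1}\ge0$, which is (b).

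For (b)$\Rightarrow$(a) it suffices, by local absolute continuity of $\Phi$ (which gives $\Phi(h)=1+\int_0^h\Phi'$), to prove that $\Phi'(h)\ge0$ for a.e.\ $h$. Fix such an $h_0$ and set $g_{h_0}=Q_{h_0}^W(g)$. The semigroup property rewrites $\Phi(h_0+s)=\int Q_s^W(g_{h_0})^{-n}$, and repeating the first paragraph for the pair $(g_{h_0},W)$ — legitimate because $g_{h_0}$ is again nonnegative and locally Lipschitz (Lemma~\ref{le:full_lipschitz_continuity}), and, as shown in the appendix, the class of admissible pairs is stable under the flow — identifies $\Phi'(h_0)=\tfrac{d}{ds}\big|_{s=0}\Phi(h_0+s)$ with $n\int W^*(\nabla g_{h_0})\,g_{h_0}^{-n-1}$, that is, with $n$ times the left-hand side of inequality (b) written for $g_{h_0}$ in place of $g$. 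Thus (b)$\Rightarrow$(a) reduces to the claim that the differentiated inequality, known at time $0$, persists along the whole orbit $\{Q_h^W(g):h\ge0\}$; granting it, $\Phi'\ge0$ a.e., $\Phi$ is non-decreasing, and $\Phi(h)\ge\Phi(0)=1$ yields (a).

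I expect this last point to be the main obstacle. Since the semigroup $Q_h^W$ is genuinely nonlinear, hypothesis (b) for $g$ cannot simply be reinjected at later times, and the persistence of the differentiated inequality along the orbit is the real content; it is handled inside the admissibility framework of Appendix~\ref{se:admissibility}, whose conditions are tailored so that each $Q_h^W(g)$ is again an admissible partner for $W$, so that the differentiation-under-the-integral identity above is uniform in $h$, and so that the sign of $\Phi'$ is controlled. (An alternative sufficient route would be to establish directly that $\Phi$ is convex in $h$, since then $\Phi(0)=1$ together with $\Phi'(0)\ge0$ would already force $\Phi\ge1$.) The remaining ingredients — Rademacher's theorem and the uniform localization of the infimal-convolution minimizers from Lemma~\ref{le:full_lipschitz_continuity}, which together yield the local Lipschitz regularity of $\Phi$ — are routine.
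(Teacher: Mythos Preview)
Your setup and the direction (a)$\Rightarrow$(b) match the paper exactly: reduce (a) to $\Phi(h)\coloneqq\int Q_h^W(g)^{-n}\ge1$, note $\Phi(0)=1$, and read off $\Phi'(0)=n\int W^*(\nabla g)\,g^{-n-1}\ge0$.

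The gap is in (b)$\Rightarrow$(a). You try to prove $\Phi'(h)\ge0$ for \emph{every} $h$ by applying (b) to the pair $(Q_h^W(g),W)$, and you flag the resulting ``persistence along the orbit'' as the main obstacle, deferring it to the appendix. But the proposition's hypotheses include the normalization $\int g^{-n}=1$, so (b) --- read as a statement valid for all admissible normalized pairs --- can only be invoked for $\tilde g=Q_h^W(g)$ when $\int\tilde g^{-n}=1$, i.e.\ precisely when $\Phi(h)=1$. At a generic $h$ the pair $(\tilde g,W)$ is not normalized, (b) says nothing, and nothing in the appendix supplies the missing inequality (the appendix controls convergence of the relevant integrals, not their sign, and does not assert stability of admissibility under the flow either). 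The paper exploits exactly this constraint rather than fighting it: it applies (b) to $\tilde g=Q_h^W(g)$ \emph{only} at those $h$ with $\Phi(h)=1$, where the normalization holds automatically, and uses the semigroup identity $\Phi(h+s)=\int Q_s^W(\tilde g)^{-n}$ to get $\Phi'(h)=\tilde\Phi'(0)\ge0$ there; a barrier argument ($\Phi$ starts at $1$ and has nonnegative derivative whenever it touches $1$) then yields $\Phi\ge1$. So the difficulty you call ``the main obstacle'' is not to be overcome by propagation but bypassed: you do not need $\Phi'\ge0$ everywhere, only on the level set $\{\Phi=1\}$.
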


\begin{proof}
  By definition of the infimal convolution \(Q_h^W(g)\), it is actually sufficient to only consider the function \(H=(1-t)Q_h^W(g)(\,.\,/(1-t))\), where \(h=t/(1-t)\), in statement \(a.\) In fact, this leads to the statement \(a'.\):
  \[
  \int Q_h^W(g)^{-n}\geq 1,
  \]
  which we prove is equivalent to \(b.\) 
  
  Let us consider the function \(\phi:h\mapsto \int Q_h^W(g)^{-n}\), which is continuous and almost everywhere differentiable in light of Lemma \ref{le:full_lipschitz_continuity} and Theorem \ref{th:adm_cv} in the Appendix.
  Its derivative is given by
  \[
  \phi'(h) = n\int \frac{W^*(\nabla g)}{g^{n+1}}.
  \]
  The implication \(a'.\implies b.\) follows from the fact that \(\phi(0)=1\), and \(\phi(h)\geq1\) for \(h\geq 0\). Then, necessarily, \(\phi'(0)\geq0\).

  Conversely, assume that \(b.\) holds. Then, whenever \(h>0\) is such that \(\phi(h)=\int Q_h^W(g)^{-n}= 1\), statement \(b.\) applied to the function \(\tilde g=Q_h^W(g)\) and the corresponding function \(\tilde\phi\) implies that \(\tilde\phi'(0)=\phi'(h)\geq 0\) thanks to the semigroup property proved in Lemma \ref{le:semigroup}. This, together with the fact that \(\phi(0)=1\), proves that \(\phi\) stays above \(1\), which is exactly statement \(a.\)  
\end{proof}
Once again, we insist on the fact that the semigroup $Q_h^W$ is not linear, and not Markov, which means, in particular, that there is no mass conservation. As such, this result stands as a bit unusual among similar results.


\section{Sharp Gagliardo-Nirenberg-Sobolev inequalities}\label{se:sharp_GNS}

\subsection{Borell-Brascamp-Lieb}

Let us start from Theorem 8 in \cite{BCEFGG}, the dynamical formulation of Borell-Brascamp-Lieb inequality.

\begin{thrm}[\cite{BCEFGG}]\label{th:BBL}
  Let \(a>1\) and \(n\in\N^*\) such that \(a\geq n\), and \(g,W:\R^n\to(0,+\infty]\) be measurable functions such that \(\int g^{-a}=\int W^{-a}=1\). Then, for any \(h\geq 0\),
    \begin{equation}\label{eq:BBL}
      (1+h)^{a-n}\int_{\R^n}Q_h^W(g)^{1-a} \geq \int_{\R^n} g^{1-a} +h\int_{\R^n} W^{1-a},
    \end{equation}
    where
    \[
    Q_h^W(g)(x)=\inf_{y\in\R^n}\{g(x-hy)+hW(y)\}\in(0,+\infty].
    \]
    Furthermore, when \(g\) is equal to \(W\) and is convex, there is equality.
\end{thrm}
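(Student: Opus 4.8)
The plan is to reduce the dynamical inequality \eqref{eq:BBL} to the static Borell-Brascamp-Lieb inequality of Theorem \ref{th:better_BBL}, or rather to its natural $a$-homogeneous generalization, via a careful change of variables that absorbs the factor $(1+h)^{a-n}$. First I would renormalize: set $t = h/(1+h) \in [0,1)$, so that $1-t = 1/(1+h)$, and observe that the definition of $Q_h^W(g)$ means precisely that $H(z) := (1+h)^{-1} Q_h^W(g)\bigl((1+h) z\bigr)$ is the largest function satisfying $H((1-t)x + ty) \le (1-t)g(x) + t W(y)$ for all $x,y$. The content to be proved then becomes: if $\int g^{-a} = \int W^{-a} = 1$ and $H((1-t)x+ty) \le (1-t)g(x) + tW(y)$, then $(1-t)^{n-a}\int H^{1-a} \ge (1-t)\int g^{1-a} + t\int W^{1-a}$. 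When $a = n$ the prefactor is $1$ and this is exactly Theorem \ref{th:better_BBL}; for $a > n$ it is the $a$-version, which is what is actually quoted as Theorem 8 in \cite{BCEFGG}, so strictly speaking one may simply cite it. If instead one wants a self-contained argument, I would deduce the $a$-version from the $n$-version by the standard lifting trick: given $g, W$ on $\R^n$ with $\int g^{-a} = \int W^{-a} = 1$, define $\tilde g, \tilde W$ on $\R^n \times \R^{a-n}$ (taking $a$ an integer for this step, then extending by a density/approximation argument, or by the dimension-free Borell-Brascamp-Lieb formulation with the $(-1/(a-n))$-concavity condition) by $\tilde g(x,u) = g(x)(1 + |u/g(x)|^2)^{?}$ — more cleanly, use that the $p$-concavity hypotheses compose under products — so that the $n+({a-n})$-dimensional static inequality applied to $\tilde g, \tilde W, \tilde H$ projects down, after integrating out the extra variables, to \eqref{eq:BBL}. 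The main obstacle in that route is bookkeeping the exponents so that $\int \tilde g^{-a} = \int g^{-a}$ and that the marginal of $\tilde H^{1-a}$ reproduces $Q_h^W(g)^{1-a}$ with the correct $(1+h)^{a-n}$ weight; this is routine but fiddly, which is why citing \cite{BCEFGG} is the honest shortcut.

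For the equality case, assume $g = W$ is convex (and $\int g^{-a} = 1$). I would show directly that $Q_h^g(g)(x) = (1+h)\, g\!\bigl(x/(1+h)\bigr)$. Indeed, by convexity, for any $y$, $g(x - hy) = g\bigl((1+h)\cdot\frac{x-hy}{1+h}\bigr)$ and the point $\frac{x-hy}{1+h} = \frac{1}{1+h}x + \frac{h}{1+h}(x-hy-x)/1$... more directly: writing $z = x - hy$, minimizing $g(z) + h g((x-z)/h)$ over $z$, convexity of $g$ gives $g(z) + h g((x-z)/h) \ge (1+h)\, g\!\bigl(\tfrac{z + (x-z)}{1+h}\bigr) = (1+h) g\!\bigl(\tfrac{x}{1+h}\bigr)$ by Jensen applied with weights $\tfrac{1}{1+h}$ and $\tfrac{h}{1+h}$, and equality holds by taking $z = x/(1+h)$, i.e. $y = x/(1+h)$. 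Hence $Q_h^g(g)(x) = (1+h)\, g(x/(1+h))$ exactly.

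It then remains to check the scalar identity $(1+h)^{a-n}\int Q_h^g(g)^{1-a} = \int g^{1-a} + h\int g^{1-a}$, i.e. that the left side equals $(1+h)\int g^{1-a}$. Substituting the formula and the change of variables $x = (1+h)x'$, $\d x = (1+h)^n \d x'$:
\[
(1+h)^{a-n}\int_{\R^n} \bigl[(1+h) g(x/(1+h))\bigr]^{1-a}\,\d x = (1+h)^{a-n}(1+h)^{1-a}(1+h)^n \int_{\R^n} g(x')^{1-a}\,\d x' = (1+h)\int_{\R^n} g^{1-a},
\]
which is exactly the right-hand side of \eqref{eq:BBL}. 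The same change of variables with exponent $-a$ in place of $1-a$ confirms consistency with the constraint $\int g^{-a} = 1$ (it shows $Q_h^g(g)$ has $\int Q_h^g(g)^{-a} = (1+h)^{-a+... }$, which is merely a sanity check and not needed). So the equality case reduces to one convexity estimate plus one elementary scaling computation; I expect no real difficulty there, the only subtlety being to make sure $g$ convex and positive with $\int g^{-a} = 1$ indeed forces $\lim_{|x|\to\infty} g(x)/|x| = +\infty$ or at least that $Q_h^g(g)$ is finite and the manipulations are licit, which follows since a positive convex function with integrable negative power must grow at least linearly.
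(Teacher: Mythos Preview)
Your proposal is correct and matches the paper's approach. The paper does not prove the main inequality \eqref{eq:BBL} either---it simply cites it as Theorem 8 of \cite{BCEFGG}---and for the equality case it gives precisely the same argument you do: use convexity of $W$ (with the barycentric identity $\tfrac{x}{1+h}=\tfrac{1}{1+h}(x-hy)+\tfrac{h}{1+h}y$) to show $Q_h^W(W)(x)=(1+h)W\!\bigl(\tfrac{x}{1+h}\bigr)$, then note that the scaling computation is straightforward; you in fact carry that computation out explicitly, which the paper leaves to the reader. Your additional sketch of a self-contained route to the $a>n$ case via dimensional lifting is extra relative to the paper and is not needed, but it is not wrong as a heuristic; your closing remark about linear growth of $g$ is also unnecessary, since the formula $Q_h^g(g)(x)=(1+h)g(x/(1+h))$ already follows from the two-sided bound you established.
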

To see that there is equality whenever \(g=W\) is convex, fix \(x\in\R^n\). For any \(y\in\R^n\), since \(\frac{x}{1+h}=\frac{1}{1+h}(x-hy)+\frac{h}{1+h}y\),
\[
(1+h)\left(\frac{W(x-hy)}{1+h}+\frac{h}{1+h}W(y)\right)\geq (1+h)W\left(\frac{x}{1+h}\right).
\]
Conversely, $Q_h^W(g)(x)$ is achieved at \(y=x/(1+h)\). In particular, for all \(x\in\R^n\), \(h\geq0\),
\[
Q_h^W(W)(x)=(1+h)W\left(\frac{x}{1+h}\right),
\]
and equality in \eqref{eq:BBL} is a straightforward computation.

In \cite{BCEFGG}, subsection 3.2, Bolley, Cordero-Erausquin, Fujita, Gentil, and Guillin use Theorem \ref{th:BBL} to prove optimal Sobolev and Gagliardo-Nirenberg-Sobolev type inequalities in the half-space \(\R_n^+=\R^{n-1}\times \R_+\).
We want to extend these results to more general domains \(\Omega\) in \(\R^n\), where \(n\geq 2\). Let us assume that \(\Omega\) is the epigraph of a continuous function \(\varphi:\R^{n-1}\to\R\) such that \(\varphi(0)=0\). In other words,
\[
\Omega=\{(x_1,x_2)\in\R^{n-1}\times\R, x_2\geq\varphi(x_1)\}.
\]
Let \(e=(0,1)\in\R^{n-1}\times\R\), and for \(h\geq 0\), define
\[
\Omega_h=\Omega+\{he\}=\{(x_1,x_2)\in\R^{n-1}\times\R, x_2\geq\varphi(x_1)+h\}.
\]
Let \(a\geq n\), and consider \(g:\Omega\to(0,+\infty)\) and \(W:\Omega_1\to(0,+\infty)\), two measurable functions such that \(\int_\Omega g^{-a}=\int_{\Omega_1}W^{-a}=1\). After extending these functions by \(+\infty\) outside of their respective domain, inequality \eqref{eq:BBL} yields
\begin{equation}\label{eq:BBL2}
  (1+h)^{a-n}\int_{B_h}Q_h^W(g)^{1-a} \geq \int_{\Omega} g^{1-a} +h\int_{\Omega_1} W^{1-a}
\end{equation}
where
\[
B_h=\operatorname{dom}(Q_h^W(g)).
\]
When \(g(x)=W(x+e)\) and \(W\) is convex, then
\[
Q^W_h(g)(x)=(1+h)W\left(\frac{x+e}{1+h}\right)
\]
and equality is reached in the inequality above.

To get a sense of what is to follow, notice that there is equality in inequality \eqref{eq:BBL2} when \(h=0\). Now, when \(\Omega=\R^n_+\), the interesting fact that \(\Omega_h=B_h\) allows us, under certain admissibility criteria for \(W\) and \(g\), to compute the derivative of inequality \eqref{eq:BBL2} with respect to \(h\), at \(h=0\). By doing so, the term \(\int_{\partial \R_+^n} Q_0^W(g)^{1-a}=\int_{\partial \R_+^n} g^{1-a}\) appears in the left hand side, thus leading to trace inequalities.

Before going any further, let us investigate under which condition the two sets \(\Omega_h\) and \(B_h\) coincide. We have the following lemma:
\begin{lemm}\label{le:cone_support}
  There exists \(h_0>0\) such that for all \(h\in(0,h_0)\), \(B_h=\Omega_h\) if, and only if, \(\Omega\) is a convex cone. In that case, \(B_h\) and \(\Omega_h\) coincide for all \(h\geq 0\).
\end{lemm}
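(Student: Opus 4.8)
The plan is to unpack what $B_h = \dom(Q_h^W(g))$ actually is. Since $g$ has domain $\Omega$ and $hW(\cdot/h)$ has domain $h\,\dom W = h(\Omega_1) = h\Omega + he$, the first bullet of the Minkowski-sum proposition gives $B_h = \dom(g\ic hW(\cdot/h)) = \Omega + (h\Omega + he) = \Omega + h\Omega + he$. On the other hand, $\Omega_h = \Omega + he$. So the identity $B_h = \Omega_h$ for all small $h>0$ is equivalent to $\Omega + h\Omega = \Omega$ for all $h$ in some interval $(0,h_0)$; more precisely, after subtracting $he$, to $\Omega + h\Omega = \Omega + he$, but one checks directly that $he \in h\Omega$ (since $e=(0,1)$ and $\varphi(0)=0$ means $(0,1)=(0,\varphi(0)+1)\in\Omega$, hence $(0,h)\in h\Omega$), while also $\Omega\subset \Omega + h\Omega - he$ needs examination. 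The cleanest route is to show: $B_h=\Omega_h$ for small $h$ iff $\Omega + s\Omega = (1+s)\Omega$ for all small $s$, and that this scaling/additivity property characterizes convex cones among epigraphs $\Omega$.

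First I would establish the ``if'' direction, which should be short: if $\Omega$ is a convex cone, then $\Omega$ is convex and positively homogeneous, so for any $h\geq 0$ we have $h\Omega = \Omega$ (homogeneity, using $0\in\Omega$) and $\Omega + \Omega = \Omega$ (convexity plus the cone property: $a+b = 2(\tfrac12 a + \tfrac12 b) \in 2\Omega = \Omega$). Hence $B_h = \Omega + h\Omega + he = \Omega + \Omega + he = \Omega + he = \Omega_h$ for \emph{all} $h\geq 0$, giving the last sentence of the lemma for free. For the ``only if'' direction, I would argue contrapositively: suppose $\Omega$ is not a convex cone. Since $\Omega = \epi\varphi$ with $\varphi$ convex (so $\Omega$ is automatically convex), the failure must be of homogeneity, i.e.\ $\varphi$ is not positively $1$-homogeneous. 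Then I would extract a point witnessing this and track it through the set identity $B_h = \Omega + h\Omega + he$ to contradict $B_h = \Omega_h = \Omega + he$ for all small $h$; equivalently, I would show $\Omega + h\Omega \supsetneq \Omega + he$ (or the reverse containment fails) for arbitrarily small $h$ unless $\varphi$ is $1$-homogeneous. Concretely, $\Omega + h\Omega = \Omega_h$ translates into a functional identity for $\varphi$: the infimal convolution $\varphi \ic (h\varphi(\cdot/h))$ equals $\varphi(\cdot) + h$, i.e. $\inf_{y}\{\varphi(x-y) + h\varphi(y/h)\} = \varphi(x) + h$ (using that the epigraph of $h\varphi(\cdot/h)$ shifted is what generates $\Omega_1$-scaled); for this to hold for all small $h>0$, differentiating at $h=0$ (or testing with $y = hz$) forces $\inf_z\{\varphi(x) - z\cdot\nabla\varphi(x) + \varphi(z)\} \geq 1$-type relations that, upon using $\varphi(0)=0$, pin down $\varphi$ as the support-type function of a convex set, i.e.\ $1$-homogeneous.

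The main obstacle I anticipate is the ``only if'' direction — turning ``$B_h = \Omega_h$ for small $h$'' into ``$\varphi$ is $1$-homogeneous'' rigorously. The set-level reformulation $B_h = \Omega + h\Omega + he$ is robust (it follows purely from the domain-of-infimal-convolution proposition and does not depend on the choice of $g, W$ beyond their domains), so the real work is the convex-geometry step: showing that a convex set $\Omega$ of the form $\epi\varphi$ satisfying $\Omega + h\Omega = (1+h)\Omega$ for a sequence $h\to 0^+$ must be a cone. The natural tool is that $(1+h)^{-1}(\Omega + h\Omega) = \tfrac{1}{1+h}\Omega + \tfrac{h}{1+h}\Omega$ is a Minkowski convex combination, and the identity says this combination returns $\Omega$ exactly; iterating/combining such identities one generates $\lambda\Omega = \Omega$ for a dense set of $\lambda>0$, hence for all $\lambda>0$ by closedness, which is precisely the cone property. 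I would also need to double-check the edge behavior at $h=0$ and confirm that ``for all $h\in(0,h_0)$'' genuinely upgrades to ``for all $h\geq 0$'' once the cone property is known, which follows immediately from the ``if'' computation above. Translating the dense-scaling argument back to a clean statement about $\varphi$, and handling the case where $\dom\varphi = \R^{n-1}$ versus a proper convex subset, is where I expect to spend the most care.
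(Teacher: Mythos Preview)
Your set-level reformulation \(B_h = \dom g + h\,\dom W = \Omega + h\Omega + he\) is correct and gives a clean route, genuinely different from the paper's. The paper never works at the set level: it translates \(B_h=\Omega_h\) into the functional inequality \(\varphi\bigl((x_1-y_1)/h\bigr)\geq (\varphi(x_1)-\varphi(y_1))/h\) and then does pointwise analysis on \(\varphi\) along rays, using a connectedness argument to show \(\varphi(sz)=s\alpha\) on each half-line, and only afterwards recovers convexity from the same inequality. Your Minkowski-sum approach is more geometric and, for the ``if'' direction, noticeably shorter than the paper's.

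There is, however, a real gap in your ``only if'' direction. At the point of this lemma the paper assumes only that \(\varphi\) is \emph{continuous} with \(\varphi(0)=0\); convexity of \(\Omega\) is part of the conclusion, not the hypothesis, and the paper's proof derives it explicitly. You instead write ``since \(\Omega=\epi\varphi\) with \(\varphi\) convex'' and then only argue homogeneity, relying on the identity \(\Omega+h\Omega=(1+h)\Omega\), which itself requires convexity. The gap is not fatal within your framework: after translating by \(-he\), the hypothesis \(B_h=\Omega_h\) gives exactly \(\Omega+h\Omega=\Omega\) (your intermediate line ``\(\Omega+h\Omega=\Omega+he\)'' is a slip). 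From \(\Omega+h\Omega\subset\Omega\) for \(h\in(0,h_0)\) and \(0\in\Omega\), iterate: for any \(x,y\in\Omega\) and \(s>0\), choose \(k\) with \(s/k<h_0\) and apply the inclusion \(k\) times to get \(x+sy\in\Omega\). Taking \(x=0\) yields the cone property; taking \(s=1\) yields \(\Omega+\Omega\subset\Omega\), from which convexity follows since then \((1-t)x+ty\in(1-t)\Omega+t\Omega=\Omega+\Omega\subset\Omega\). So your plan works once you drop the unwarranted convexity assumption and insert this iteration step in its place.
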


\begin{proof}
  First, note that \(Q_h^W(g)(x)<+\infty\) if, and only if, there exists \(y\in\Omega_1\) such that \(x-hy\in\Omega\). By definition of \(\Omega\), this is equivalent to
  \begin{align*}
    &\exists\, (y_1,y_2)\in\R^{n-1}\times\R \text{ s.t. }
    \left\{\begin{aligned}
    y_2&\geq \varphi(y_1)+1 \\
    x_2-hy_2&\geq \varphi(x_1-hy_1)
    \end{aligned} \right.\\
    \iff \Big(
    &\exists\, y_1\in\R^{n-1} \text{ s.t. }
    x_2\geq\varphi(x_1-hy_1)+h\varphi(y_1)+h)\Big).
  \end{align*}
  If \(x\in \Omega_h\), then choosing \(y_1=0\) proves that \(x\in B_h\), so \(\Omega_h\subset B_h\). If \(h>0\), \(\Omega_h=B_h\) if, and only if, for all \(x_1,y_1\in\R^{n-1}\),
  \begin{equation}\label{eq:cone_support}
    \varphi\left(\frac{x_1-y_1}{h}\right)\geq\frac{\varphi(x_1)-\varphi(y_1)}{h}. 
  \end{equation}
  Indeed, if \(\Omega_h\supset B_h\), then, for any \(x_1,y_1\in\R^{n-1}\), 
  \[
  x_2\coloneqq\varphi(x_1-hy_1)+h\varphi(y_1)+h\geq \varphi(x_1)+h
  \]
  and thus, replacing \(y_1\) by \((x_1-y_1)/h\), we get the stated inequality. The reciprocal is immediate.

  Now, let \(z\in\R^{n-1}\), \(\abs{z}=1\). Inequality \eqref{eq:cone_support}, for \(y_1=0\), becomes
  \[
  \varphi(z)\geq\frac{1}{h}\varphi(hz)
  \]
  for any \(h\) smaller than \(h_0\). Let \(\alpha=\limsup_{h\to 0}\varphi(hz)/h\). Using inequality \eqref{eq:cone_support} once again, we get, for any \(s\geq 0\),
  \[
  \varphi(sz)\geq\frac{s}{sh}\varphi(shz),
  \]
  for any sufficiently small \(h>0\). Taking the inferior limit when \(h\to 0\) proves that for any \(s\geq0\)
  \begin{equation}\label{eq:cone_support2}
  \varphi(sz)\geq s\alpha.
  \end{equation}
  The set \(\{s\geq 0,\varphi(sz)=s\alpha \}\) is non-empty because it contains \(0\), and it is closed by continuity. Let \(s\geq 0\) be such that \(\varphi(sz)=s\alpha\). Then, invoking inequality \eqref{eq:cone_support}, and then inequality \eqref{eq:cone_support2}, we get
  \begin{align*}
    \varphi\left(\frac{(1+h)sz-sz}{h}\right)=\varphi(sz)=s\alpha&\geq \frac{\varphi((1+h)sz)-\varphi(sz)}{h}\\
    &=\frac{\varphi((1+h)sz)-s\alpha}{h}\\
    &\geq \frac{(1+h)s\alpha-s\alpha}{h}= s\alpha
  \end{align*}
  so there is actually equality, and \(\varphi((1+h)sz)=(1+h)s\alpha\) for any sufficiently small \(h>0\). This shows that the connected component of \(\{s\geq 0,\varphi(sz)=s\alpha \}\) containing \(0\) is open in \(\R_+\). Since it is also closed, it is the half real line \(\R_+\). Thus, \(\varphi\) is linear over half-lines with initial point \(0\). Inequality \eqref{eq:cone_support} then becomes 
  \[
  \varphi(x_1-y_1)\geq \varphi(x_1)-\varphi(y_1)
  \]
  for any \(x_1,y_1\in\R^{n-1}\). Let \(t\in\left[0,1\right]\); replacing \(x_1\) by \((1-t)x_1+ty_1\) and \(y_1\) by \(ty_1\), and using linearity, the inequality becomes exactly the convexity inequality, that is
  \[
  \varphi((1-t)x_1+ty_1)\leq(1-t)\varphi(x_1)+t\varphi(y_1).
  \]
  The reciprocal is trivial. It is also clear that in this case, \(B_h=\Omega_h\) for \emph{any} \(h\geq0\).
\end{proof}

This lemma will be used in section \ref{se:convex_cones} to prove the trace Sobolev and the trace Gagliardo-Nirenberg-Sobolev inequalities in convex cones. We can go a bit further, and impose only \(\varphi\) to be convex. 

\begin{lemm}\label{le:convex_support}
  If \(\varphi\) is convex, then
  \[
  B_h=\left\{(x_1,x_2)\in\R^n, x_2\geq h+(1+h)\varphi\left(\frac{x_1}{1+h}\right)\right\}.
  \]
\end{lemm}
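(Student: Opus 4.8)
The plan is to compute the essential domain $B_h = \dom Q_h^W(g)$ explicitly by optimizing over the $y_1$-variable in the characterization already obtained in the proof of Lemma \ref{le:cone_support}. Recall from there that $x = (x_1,x_2) \in B_h$ if and only if there exists $y_1 \in \R^{n-1}$ with $x_2 \geq \varphi(x_1 - hy_1) + h\varphi(y_1) + h$. Equivalently, writing the right-hand side as a function of $y_1$, we have $x \in B_h$ if and only if
\[
x_2 \geq h + \inf_{y_1 \in \R^{n-1}} \bigl\{ \varphi(x_1 - hy_1) + h\varphi(y_1) \bigr\}.
\]
So the whole task reduces to evaluating this infimal convolution of $\varphi$ with the rescaled copy $h\varphi(\,\cdot\,/1)$ — more precisely, writing $u = hy_1$, the infimum is $\inf_{u}\{\varphi(x_1 - u) + h\varphi(u/h)\} = Q_h^\varphi(\varphi)(x_1)$ in the notation of the paper, but applied in dimension $n-1$ and with $W = g = \varphi$.

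The key step is then to invoke the equality case already recorded after Theorem \ref{th:BBL} (and again in Lemma \ref{le:semigroup}'s spirit): when $\varphi$ is convex, $Q_h^\varphi(\varphi)(x_1) = (1+h)\varphi\bigl(x_1/(1+h)\bigr)$. The argument for this is exactly the two-line computation given in the text: for the upper bound, take $y_1 = x_1/(1+h)$ (so $x_1 - hy_1 = x_1/(1+h)$ as well), giving $\varphi(x_1/(1+h)) + h\varphi(x_1/(1+h)) = (1+h)\varphi(x_1/(1+h))$; for the lower bound, for any competitor $y_1$ convexity gives
\[
\varphi(x_1 - hy_1) + h\varphi(y_1) \geq (1+h)\,\varphi\!\left(\frac{(x_1 - hy_1) + hy_1}{1+h}\right) = (1+h)\varphi\!\left(\frac{x_1}{1+h}\right),
\]
since $\tfrac{1}{1+h}(x_1 - hy_1) + \tfrac{h}{1+h}y_1 = \tfrac{x_1}{1+h}$ and $\tfrac{1}{1+h} + \tfrac{h}{1+h} = 1$. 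Substituting back, $x \in B_h$ if and only if $x_2 \geq h + (1+h)\varphi\bigl(x_1/(1+h)\bigr)$, which is the claimed formula.

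I do not expect a serious obstacle here; the statement is essentially a corollary of the convex equality case. The only mild care needed is (i) to make sure the infimum is actually attained so that "$x_2 \geq h + \inf(\dots)$" correctly describes membership in $B_h$ rather than only its interior — this is where one may cite Lemma \ref{le:exactness} (with $f = g = \varphi$ nonnegative? — note $\varphi$ need not be nonnegative, so one should instead argue directly that the infimum is attained at $y_1 = x_1/(1+h)$, which the computation above already shows), and (ii) to handle the bookkeeping of the affine shift by $he$ versus the rescaling, i.e. to keep straight that $g$ lives on $\Omega$ and $W$ on $\Omega_1$ so that the "$+h$" and the "$+1$" appear in the right places. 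Both are routine once the infimal-convolution identity for convex $\varphi$ is in hand.
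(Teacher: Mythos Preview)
Your argument is correct. Both your proof and the paper's rest on the same convexity identity \(Q_h^\psi(\psi)=(1+h)\psi(\,\cdot\,/(1+h))\) for convex \(\psi\), but they package it differently. You recycle the scalar characterization obtained in the proof of Lemma~\ref{le:cone_support} and compute the \((n-1)\)-dimensional inf-convolution \(\inf_{y_1}\{\varphi(x_1-hy_1)+h\varphi(y_1)\}\) directly via Jensen, noting along the way that the infimum is attained at \(y_1=x_1/(1+h)\) so that the ``\(\exists y_1\)'' condition is genuinely equivalent to ``\(x_2\geq h+\inf(\dots)\)''. The paper instead observes that \(B_h\) depends only on \(\dom g\) and \(\dom W\), replaces them by the convex indicator \(\omega\) of \(\Omega\) (so \(g=\omega\), \(W=\omega(\,\cdot\,-e)\)), and reuses verbatim the \(\R^n\)-equality case recorded after Theorem~\ref{th:BBL} to get \(Q_h^W(\omega)(x)=(1+h)W((x+e)/(1+h))\), from which the domain is read off. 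Your route is more hands-on and stays entirely at the level of \(\varphi\); the paper's is a one-line reduction to a formula already established, at the cost of introducing the auxiliary \(\{0,+\infty\}\)-valued functions.
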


\begin{proof}
  One may notice that setting \(\omega(x)=0\) if \(x\in\Omega\) and \(+\infty\) if \(x\in\Omega^c\), and \(W(x)=\omega(x-e)\), then \(\omega\) is convex, thus
  \[
  B_h=\operatorname{dom}(Q_h^W(\omega))=\operatorname{dom}\left(x\mapsto(1+h)\,W\!\left(\frac{x+e}{1+h}\right)\right),
  \]
  and
  \begin{align*}
    W\left(\frac{x+e}{1+h}\right) < +\infty &\iff \frac{x+e}{1+h}-e \in\Omega\\
    &\iff x_2\geq h+(1+h)\varphi\left(\frac{x_1}{1+h}\right).
  \end{align*}
\end{proof}


\subsection{Convex cones}\label{se:convex_cones}
In this subsection, we assume that \(\Omega\) is a convex cone. In that case, invoking Lemma \ref{le:cone_support}, inequality~\eqref{eq:BBL} becomes
\begin{equation}\label{eq:convex_cone_BBL}
(1+h)^{a-n}\int_{\Omega_h}Q_h^W(g)^{1-a} \geq \int_{\Omega} g^{1-a} +h\int_{\Omega_1} W^{1-a},
\end{equation}
for any \(h>0\), and there is equality when \(h=0\). Taking the derivative of this inequality with respect to \(h\), under the admissibility conditions for \(g\) and \(W\) exposed in full details in Appendix~\ref{se:admissibility}, and evaluating at \(h=0\), we prove that
\begin{equation}\label{eq:derived_BBL}
  (a-n)\int_\Omega g^{1-a}+(a-1)\int_\Omega\frac{W^*(\nabla g)}{g^a}-\int_{\R^{n-1}}g^{1-a}(x_1,\varphi(x_1))\d x_1 \geq \int_{\Omega_1}W^{1-a}.
\end{equation}
There, we used Lemma \ref{le:hamilton_jacobi}, and the fact that
\begin{align*}
  \frac{1}{h}\left(\int_{\Omega_h} Q_h^W(g)^{1-a}-\int_\Omega g^{1-a}\right) &= \int_{\Omega_h} \frac{Q_h^W(g)^{1-a}-g^{1-a}}{h} + \frac{1}{h}\left(\int_{\Omega_h} g^{1-a}-\int_\Omega g^{1-a}\right)\\
  &\hspace{-3em}=\int_{\Omega_h} \frac{Q_h^W(g)^{1-a}-g^{1-a}}{h} - \frac{1}{h}\left(\int_{\R^{n-1}}\int_{\varphi(x_1)}^{h+\varphi(x_1)}g^{1-a}(x_1,x_2)\d x_2\d x_1\right) \\
  &\hspace{-3em}\xrightarrow[h\to 0]{} (1-a)\int_\Omega\frac{W^*(\nabla g)}{g^a}-\int_{\R^{n-1}}g^{1-a}(x_1,\varphi(x_1))\d x_1,
\end{align*}
see Theorem \ref{th:adm_cv}.

Let \(p\in(1,n)\), and \(q\) its conjugate exponent, \(1/p+1/q=1\). Applying inequality \eqref{eq:derived_BBL} to the function \(W\) defined by \(W(x)=C\norm x^q/q\), where \(C>0\) is such that \(\int W^{-a}=1\), which happens to be admissible for this choice of \(q\), in the sense of Definition \ref{de:admissibility} in the Appendix. We find
\[
(a-n)\int_\Omega g^{1-a}+C^{1-p}\frac{a-1}{p}\int_\Omega\frac{\norm{\nabla g}_*^p}{g^a}-\int_{\R^{n-1}}g^{1-a}(x_1,\varphi(x_1))\d x_1 \geq \int_{\Omega_1}W^{1-a}
\]
for any admissible \(g\), where \(\norm x_*=\sup_{\norm y=1}x\cdot y\) is the dual norm of \(x\). 
Next, we extend the above inequality to all functions \(g\) such that \(f=g^{(p-a)/p}\in\mc C^\infty_c(\Omega)\). This can be done by approximation by admissible functions, we refer to the Appendix \ref{se:admissibility}. Rewriting the quantities in terms of \(f=g^{-(a-p)/p}\) yields
\[
\int_{\R^{n-1}}f^{p\frac{a-1}{a-p}}(x,\varphi(x))\d x\leq{} C^{1-p}\frac{a-1}{p}\left(\frac{p}{a-p}\right)^p\int_\Omega\norm{\nabla f}^p_*-\int_{\Omega_1}W^{1-a}+(a-n)\int_\Omega f^{p\frac{a-1}{a-p}}
\]
We may then remove the normalization to find that inequality \eqref{eq:derived_BBL} becomes
\begin{equation}\label{eq:extended_BBL}
  \begin{aligned}
    \int_{\R^{n-1}}f^{p\frac{a-1}{a-p}}(x,\varphi(x))\d x\leq{}& C^{1-p}\frac{a-1}{p}\left(\frac{p}{a-p}\right)^p\left(\int_\Omega\norm{\nabla f}^p_*\right)\beta^{p\frac{p-1}{a-p}}-\left(\int_{\Omega_1}W^{1-a}\right)\beta^{p\frac{a-1}{a-p}}\\
    &+(a-n)\int_\Omega f^{p\frac{a-1}{a-p}}
  \end{aligned}
\end{equation}
where
\[
\beta=\left(\int_\Omega f^{\frac{pa}{a-p}}\right)^{\frac{a-p}{ap}}.
\]
Now, define \(u=\frac{a-1}{a-p}\) and \(v=u'=\frac{a-1}{p-1}\), so that \(u,v>1\) and \(1/u+1/v=1\). By Young's inequality, we find
\begin{equation}\label{eq:young}
  \begin{aligned}
    A\int_\Omega\norm{\nabla f}^p_*\beta^{p\frac{p-1}{a-p}}-\left(\int_{\Omega_1}W^{1-a}\right)\beta^{p\frac{a-1}{a-p}}
    &= Bv\left(\frac{A}{Bv}\int_\Omega\norm{\nabla f}^p_*\beta^{p\frac{p-1}{a-p}}-\frac{1}{v}\beta^{p\frac{a-1}{a-p}}\right)\\
    &\leq D\left(\int_\Omega\norm{\nabla f}^p_*\right)^{u},
  \end{aligned}
\end{equation}
where
\[
A=C^{1-p}\frac{a-1}{p}\left(\frac{p}{a-p}\right)^p,\quad B=\int_{\Omega_1}W^{1-a}\quad\text{and}\quad D=\frac{A^u}{(Bv)^{u-1}}\frac{1}{u}.
\]

In order to find a more compact inequality, we consider, for \(\lambda>0\), \(f_\lambda:x\mapsto f(\lambda x)\). By linearity of \(\varphi\), applying \eqref{eq:young} to \(f_\lambda\) leads to
\[
\int_{\R^{n-1}}f^{p\frac{a-1}{a-p}}(x,\varphi(x))\d x\leq \lambda^{(a-n)\frac{p-1}{a-p}}\frac{A^u}{(Bv)^{u-1}}\frac{1}{u}\left(\int_\Omega\norm{\nabla f}^p_*\right)^{u}+\frac{a-n}{\lambda}\int_\Omega f^{p\frac{a-1}{a-p}}.
\]
Optimizing this inequality with respect to \(\lambda>0\) finally yields inequality \eqref{eq:trace_GN_cone} of Theorem \ref{th:trace_GN_cone}

It remains to show that inequality \eqref{eq:trace_GN_cone} is optimal. The function for which equality is reached does not have compact support, but this technicality does not bear much relevance.
To prove optimality, note that there is equality in \eqref{eq:derived_BBL} when \(g(x)=W(x+e)\), which implies equality in \eqref{eq:extended_BBL} when \(f(x)=\norm{x+e}^{-\frac{a-p}{p-1}}\). If Young's inequality \eqref{eq:young} is an equality, then the optimization with respect to parameter \(\lambda\) necessarily preserves the equality. Thus, it is enough to show that for \(f(x)=\norm{x+e}^{-\frac{a-p}{p-1}}\), there is equality in \eqref{eq:young}. This is the case if, and only if,
  \[
  \frac{A}{Bv}\int_\Omega\norm{\nabla f}^p_*=\left(\beta^{p\frac{p-1}{a-p}}\right)^{v-1}.
  \]
  Let us now write, for \(\alpha>0\)
  \[
  I_\alpha\coloneqq\int_\Omega \norm{x+e}^{-\alpha}.
  \]
  Then,
  \[
  C=q\left(\int_\Omega \norm{x+e}^{-qa}\right)^{\frac{1}{a}}=\frac{p}{p-1}I_{ap/(p-1)}^{1/a}
  \]
  hence
  \[
  A=\frac{(a-1)(p-1)^{p-1}}{(a-p)^p}I_{ap/(p-1)}^{(1-p)/a},\quad
  B=I_{ap/(p-1)}^{(1-a)/a}I_{p(a-1)/(p-1)}, \quad\text{and}\quad
  \left(\beta^{p\frac{p-1}{a-p}}\right)^{v-1}=I_{ap/(p-1)}^{(a-p)/a}.
  \]
  \begin{claim}
    For \(\gamma\in\R\), let \(h:\R^n\backslash\{0\}\to \left]0,+\infty\right[, x\mapsto \norm{x}^{\gamma}\). Then, almost everywhere, \(h\) is differentiable, and \(\norm{\nabla h(x)}^p_*=\abs\gamma\norm{x}^{\gamma-1}\).
  \end{claim}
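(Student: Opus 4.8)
The plan is to reduce everything to the norm map $N\colon x\mapsto\norm{x}$, relying on two facts: that $N$ is differentiable almost everywhere, and that at each point $x\neq 0$ where it is differentiable one has $\norm{\nabla N(x)}_*=1$. Granting these, the claim follows immediately. Indeed $N$ is strictly positive on $\R^n\setminus\{0\}$ and $t\mapsto t^{\gamma}$ is $C^{\infty}$ on $(0,+\infty)$, so $h=N^{\gamma}$ is differentiable at every point where $N$ is, hence almost everywhere, and the chain rule gives $\nabla h(x)=\gamma\,N(x)^{\gamma-1}\,\nabla N(x)$; passing to the dual norm,
\[
\norm{\nabla h(x)}_*=\abs{\gamma}\,\norm{x}^{\gamma-1}\,\norm{\nabla N(x)}_*=\abs{\gamma}\,\norm{x}^{\gamma-1}.
\]

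The almost everywhere differentiability is cheap: $N$ is a norm, hence a finite convex function on $\R^n$, hence locally Lipschitz continuous, so Rademacher's theorem provides a Lebesgue-null exceptional set off which $N$ — and therefore $h$, by composition with the smooth map $t\mapsto t^{\gamma}$ on $(0,+\infty)$ — is differentiable. For the identity $\norm{\nabla N(x)}_*=1$ at a point of differentiability $x\neq 0$, I would establish the two bounds separately. For the upper bound, fix $v$ with $\norm{v}=1$; the triangle inequality gives $N(x\pm tv)\leq N(x)+t$ for $t>0$, so dividing by $t$ and letting $t\to 0^{+}$ yields $\pm\,\nabla N(x)\cdot v\leq 1$, that is $\abs{\nabla N(x)\cdot v}\leq 1$; taking the supremum over $\norm{v}=1$ gives $\norm{\nabla N(x)}_*\leq 1$ by the very definition of the dual norm in the introduction. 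For the lower bound, differentiate the homogeneity relation $N(tx)=tN(x)$ at $t=1$ to obtain Euler's identity $\nabla N(x)\cdot x=N(x)=\norm{x}$, whence $\norm{\nabla N(x)}_*\geq(\nabla N(x)\cdot x)/\norm{x}=1$. (Equivalently, at a point of differentiability $\nabla N(x)$ is the unique element of the subdifferential $\partial N(x)=\{y:\norm{y}_*=1,\ y\cdot x=\norm{x}\}$, which gives the identity in one stroke.)

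The only substantive ingredient is the identity $\norm{\nabla N(x)}_*=1$, and it splits into the two one-line estimates above, so I do not anticipate any real obstacle. Two small points deserve care: $N$ never vanishes away from the origin, which is what makes raising to a possibly negative exponent $\gamma$ — and hence the chain rule — legitimate; and the supremum appearing in the upper-bound step is exactly the one defining $\norm{\cdot}_*$, so there is no circularity, $\norm{\cdot}_*$ being by definition the dual of $\norm{\cdot}$.
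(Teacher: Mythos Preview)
Your proof is correct and follows essentially the same approach as the paper's: factor $h$ as the composition of the norm $N=\norm{\cdot}$ with the smooth map $t\mapsto t^{\gamma}$, use Rademacher for almost everywhere differentiability of $N$, apply the chain rule, and then show $\norm{\nabla N(x)}_*=1$ via the triangle inequality for the upper bound and the directional derivative along $x/\norm{x}$ (equivalently Euler's identity) for the lower bound. The only cosmetic difference is that you phrase the lower bound through homogeneity rather than computing the radial difference quotient directly, but these are the same computation.
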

  \noindent Using this, we conclude that there is indeed equality in \eqref{eq:young}, since then
  \[
  \int_\Omega\norm{\nabla f}^p_*=\left(\frac{a-p}{p-1}\right)^pI_{p(a-1)/(p-1)}.
  \]
\begin{proof}[Proof of the claim.]
  Consider \(\phi:x\mapsto \norm x\) and \(\psi:\rho\mapsto\rho^\gamma\). \(\phi\) is convex, hence almost everywhere differentiable by Rademacher's theorem, and \(\psi\) smooth on \(\left]0,+\infty\right[\), hence the claimed regularity of \(h=\psi\circ\phi\). For almost every \(x\), \(\nabla h(x)=\gamma \nabla\phi(x)\norm x^{\gamma-1}\), so
   \[
   \norm{\nabla h(x)}_*=\abs\gamma\norm x^{\gamma-1}\norm{\nabla\phi(x)}_*
   \]
   If \(x\neq0\) is a point of differentiability of \(\phi\), and \(t>0\), then
   \[
   1=\frac{\norm{x+tx/\norm x}-\norm x}{t}\xrightarrow[t\to0]{}\nabla\phi(x)\cdot\frac{x}{\norm x},
   \]
   so \(\norm{\nabla \phi(x)}_*\geq 1\). Conversely, if \(\norm v=1\), then
   \[
   \nabla\phi(x)\cdot v=\lim_{t\to 0^+}\frac{\norm{x+tv}-\norm x}{t}\leq \lim_{t\to 0^+}\norm v =1,
   \]
   so \(\norm{\nabla \phi(x)}_*=1\) and the claim is proved.
\end{proof}


\subsection{Convex sets}
Let us now assume that \(\Omega\) is the epigraph of a convex function \(\varphi\), with \(\varphi(0)=0\). Then, according to Lemma \ref{le:convex_support}, for \(h\geq 0\),
\[
B_h=\operatorname{dom}(Q_h^W(g))=\left\{(x_1,x_2)\in\R^n, x_2\geq h+(1+h)\varphi\left(\frac{x_1}{1+h}\right)\right\}.
\]
Inequality \eqref{eq:BBL} becomes
\begin{equation}\label{eq:convex_BBL}
(1+h)^{a-n}\int_{B_h}Q_h^W(g)^{1-a} \geq \int_{\Omega} g^{1-a} +h\int_{\Omega_1} W^{1-a},
\end{equation}
and there still is equality for all \(h>0\) whenever \(g(x)=W(x+e)\) and is convex. However, it is slightly trickier to compute the derivative at \(h=0\), since \(B_h\neq \Omega_h\), and their symmetric difference depends heavily on \(\varphi\). Effectively, a third term appears when trying to differentiate \(\int_{B_h}Q^h_W(g)^{1-a}\):
\[
\frac{1}{h}\left(\int_{B_h}Q_h^W(g)^{1-a}-\int_{\Omega}g^{1-a}\right) =
\int_{\Omega_h}\frac{Q_h^W(g)^{1-a}-g^{1-a}}{h}
-\frac{1}{h}\int_{\Omega\backslash\Omega_h}g^{1-a}
+\frac{1}{h}\int_{B_h\backslash\Omega_h} Q_h^W(g)^{1-a}.
\]
Taking the derivative at \(h=0\), when possible, yields
\begin{equation}\label{eq:derived_convex_BBL}
    (a-n)\int_\Omega g^{1-a}+(a-1)\int_\Omega\frac{W^*(\nabla g)}{g^a}-\int_{\R^{n-1}}g^{1-a}(x_1,\varphi(x_1))P(x_1)\d x_1 \geq \int_{\Omega_1}W^{1-a},
\end{equation}
where
\[
P(x_1)=1+\varphi(x_1)-x_1\cdot\nabla\varphi(x_1).
\]
\begin{rmrk}
  To prove this, we had to assume that \(\varphi\) satisfies some growth condition which will be made explicit in the next theorem. The strict generality cannot be preserved here, as \(\int_{\R^{n-1}}g^{1-a}P\) may not be integrable for certain choices of \(\varphi\), where \(g\) is assumed to be the optimal function. To nuance this, it might be possible to prove this result for such a choice of \(\varphi\) whenever \(g^{1-a}\) has compact support, but then, it is not obvious whether the inequality is still optimal.
\end{rmrk}
Using inequality \eqref{eq:derived_convex_BBL} with \(W=C\norm.^q/q\), and extending it for all \(f=g^{-(a-p)/p}\in\mc C^\infty_c(\Omega)\) just like we did for convex cones, and finally invoking Young's inequality, we get the theorem
\begin{thrm}\label{th:trace_ineq}
  Let \(a\geq n>p>1\), and \(\Omega=\{(x_1,x_2)\in\R^{n-1}\times\R,x_2\geq\varphi(x_1)\}\) be a convex set. Assume that there exist some constants \(C>0\) and \(R>0\) such that
  \[
  \forall\, \norm{x_1}>R,\quad \abs{x_1\cdot\nabla\varphi(x_1)}\leq C\norm{(x_1,\varphi(x_1))}.
  \]
  Then, there exists a positive constant $D'_{n,p,a}(\Omega)$ such that for any positive function \(f\in C^\infty_c(\Omega)\),
  \begin{equation}\label{eq:trace_ineq}
    \int_{\R^{n-1}}f^{p\frac{a-1}{a-p}}(x,\varphi(x))P(x)\d x\leq D'_{n,p,a}(\Omega) \left(\int_\Omega\norm{\nabla f}_*^p\right)^{\frac{a-1}{a-p}}+(a-n)\int_\Omega f^{p\frac{a-1}{a-p}},
  \end{equation}
  where \(P(x)=1+\varphi(x)-x\cdot\nabla\varphi(x)\).
  Furthermore, when \(f(x)=\norm{x+e}^{-\frac{a-p}{p-1}}\), then \eqref{eq:trace_ineq} is an equality.
\end{thrm}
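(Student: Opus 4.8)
The plan is to run exactly the argument already carried out for convex cones in \S\ref{se:convex_cones}, the only genuinely new point being the differentiation of the left-hand side of \eqref{eq:convex_BBL} at $h=0$, which is where the weight $P$ is produced. By Lemma \ref{le:convex_support} the domain $B_h=\dom Q_h^W(g)$ is explicit, inequality \eqref{eq:BBL} of Theorem \ref{th:BBL} becomes \eqref{eq:convex_BBL}, and since $B_0=\Omega$ and $Q_0^W(g)=g$ both sides coincide at $h=0$. Writing $F(h)=(1+h)^{a-n}\int_{B_h}Q_h^W(g)^{1-a}-h\int_{\Omega_1}W^{1-a}$, inequality \eqref{eq:convex_BBL} reads $F(h)\geq F(0)$ for $h\geq0$, so once $F$ is shown to be right-differentiable at $0$ one gets $F'(0^+)\geq 0$, which is precisely \eqref{eq:derived_convex_BBL}. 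Splitting off the prefactor gives
\[
\frac{F(h)-F(0)}{h}=(1+h)^{a-n}\,\frac1h\!\left(\int_{B_h}Q_h^W(g)^{1-a}-\int_\Omega g^{1-a}\right)+\frac{(1+h)^{a-n}-1}{h}\int_\Omega g^{1-a}-\int_{\Omega_1}W^{1-a},
\]
the second term tending to $(a-n)\int_\Omega g^{1-a}$.

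\textbf{The crux} is the limit of $\frac1h(\int_{B_h}Q_h^W(g)^{1-a}-\int_\Omega g^{1-a})$, which I split as before \eqref{eq:derived_convex_BBL} into
\[
\int_{\Omega_h}\frac{Q_h^W(g)^{1-a}-g^{1-a}}{h}-\frac1h\int_{\Omega\setminus\Omega_h}g^{1-a}+\frac1h\int_{B_h\setminus\Omega_h}Q_h^W(g)^{1-a}.
\]
The first piece tends to $(a-1)\int_\Omega W^*(\nabla g)/g^a$ by Lemma \ref{le:hamilton_jacobi} and the chain rule, exactly as in the cone case; the second, since $\Omega\setminus\Omega_h=\{\varphi(x_1)\le x_2<\varphi(x_1)+h\}$ is a slab of $x_2$-thickness $h$, tends to $-\int_{\R^{n-1}}g^{1-a}(x_1,\varphi(x_1))\,\d{x_1}$. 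The new term is the third. By Lemma \ref{le:convex_support}, $B_h\setminus\Omega_h$ is the set $h+(1+h)\varphi(x_1/(1+h))\le x_2<\varphi(x_1)+h$, a thin region over $\R^{n-1}$ of $x_2$-thickness $\varphi(x_1)-(1+h)\varphi(x_1/(1+h))$; since $\varphi$ is convex and a.e.\ differentiable, a first-order expansion of $\varphi(x_1/(1+h))$ gives this thickness equal to $h\bigl(x_1\cdot\nabla\varphi(x_1)-\varphi(x_1)\bigr)+o(h)$, the bracket being $\geq 0$ by convexity and $\varphi(0)=0$. On this region $Q_h^W(g)\to g(\cdot,\varphi(\cdot))$, so the third term converges to $\int_{\R^{n-1}}g^{1-a}(x_1,\varphi(x_1))\bigl(x_1\cdot\nabla\varphi(x_1)-\varphi(x_1)\bigr)\,\d{x_1}$. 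Adding the three limits, the two boundary integrals collapse into $-\int_{\R^{n-1}}g^{1-a}(x_1,\varphi(x_1))P(x_1)\,\d{x_1}$ with $P=1+\varphi-x_1\cdot\nabla\varphi$, and together with the $(a-n)\int_\Omega g^{1-a}$ contribution this yields \eqref{eq:derived_convex_BBL}. I expect \textbf{this step to be the main obstacle}: one must justify interchanging limit and integral \emph{uniformly in $h$} in all three pieces, which genuinely fails for arbitrary $\varphi$ (as the remark before the theorem warns); this is exactly where the hypothesis $\abs{x_1\cdot\nabla\varphi(x_1)}\le C\norm{(x_1,\varphi(x_1))}$ for $\norm{x_1}>R$ provides the domination once $g$ is the candidate optimizer, and this is what the admissibility framework of Appendix~\ref{se:admissibility} (notably Theorem~\ref{th:adm_cv}) is designed to supply.

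From \eqref{eq:derived_convex_BBL} the rest is as in \S\ref{se:convex_cones}. Take $W(x)=C\norm{x}^q/q$ with $1/p+1/q=1$ and $C>0$ normalizing $\int_{\Omega_1}W^{-a}=1$, so that $W^*(\nabla g)=C^{1-p}\norm{\nabla g}_*^p/p$; substituting $f=g^{-(a-p)/p}$ turns $\int_\Omega \norm{\nabla g}_*^p/g^a$ into $(p/(a-p))^p\int_\Omega\norm{\nabla f}_*^p$ and $g^{1-a}$ into $f^{p(a-1)/(a-p)}$; extending from admissible $g$ to all non-negative $f\in C^\infty_c(\Omega)$ by the approximation of Appendix~\ref{se:admissibility}, and applying Young's inequality with exponents $u=(a-1)/(a-p)$ and $v=(a-1)/(p-1)$ to the two $\beta$-homogeneous terms exactly as in \eqref{eq:young}, produces \eqref{eq:trace_ineq} with the explicit constant $D'_{n,p,a}(\Omega)=A^u/\bigl(u(Bv)^{u-1}\bigr)$, where $A=C^{1-p}\tfrac{a-1}{p}(p/(a-p))^p$ and $B=\int_{\Omega_1}W^{1-a}$. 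Unlike the cone case, $\varphi$ has no homogeneity, so the rescaling $f\mapsto f(\lambda\cdot)$ is unavailable and the term $(a-n)\int_\Omega f^{p(a-1)/(a-p)}$ cannot be absorbed, which is why it remains in \eqref{eq:trace_ineq} (and vanishes precisely when $a=n$, recovering Theorem~\ref{th:trace_sob}).

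Finally, for sharpness I would take $f(x)=\norm{x+e}^{-(a-p)/(p-1)}$: up to the (irrelevant, by homogeneity of \eqref{eq:trace_ineq} in $f$) normalizing constant this corresponds to $g(x)=W(x+e)$ with $W$ convex, for which Theorem~\ref{th:BBL}, hence \eqref{eq:convex_BBL}, hence \eqref{eq:derived_convex_BBL}, is an equality. It then remains to check that Young's inequality is also an equality for this $f$, which is the same computation as in the cone case using $I_\alpha=\int_\Omega\norm{x+e}^{-\alpha}$ together with the claim $\norm{\nabla\norm{x}^\gamma}_*^p=\abs{\gamma}\norm{x}^{\gamma-1}$ to verify $\tfrac{A}{Bv}\int_\Omega\norm{\nabla f}_*^p=(\beta^{p(p-1)/(a-p)})^{v-1}$. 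Since equality then holds at every step, \eqref{eq:trace_ineq} is sharp.
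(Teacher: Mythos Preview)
Your proposal is correct and follows essentially the same route as the paper: differentiate \eqref{eq:convex_BBL} at $h=0$ via the three-piece splitting $(i)+(ii)+(iii)$ (with the admissibility machinery of Appendix~\ref{se:admissibility}, in particular Theorem~\ref{th:adm_cv}, to justify the limits and produce the weight $P$), specialize to $W(x)=C\norm{x}^q/q$, substitute $f=g^{-(a-p)/p}$, extend by approximation, and apply Young's inequality \eqref{eq:young}. Your remark that the rescaling $f\mapsto f(\lambda\cdot)$ is unavailable when $\varphi$ is not homogeneous, and that this is why the $(a-n)\int_\Omega f^{p(a-1)/(a-p)}$ term survives, is exactly the point and is implicit in the paper's presentation.
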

Applying this theorem for \(a=n\), we find a new version of the trace Sobolev inequality, Theorem \ref{th:trace_sob}, with $D'_{n,p}(\Omega)=D'_{n,p,n}(\Omega)$. It is important to note that in Theorem \ref{th:trace_sob}, aswell as in Theorem \ref{th:trace_ineq}, the left-hand side can be negative. The weight \(P\) itself generally \emph{is} negative outside of a compact neighbourhood of the origin, but the inequality is still optimal.


\appendix

\section{Admissibility}\label{se:admissibility}

In this section, we prove that the results are true for a class of admissible functions, and we extend these results to the appropriate, more general setting, by approximation by admissible functions. The difficulty here lies in that \(g\) must not be bounded or even Lipschitz, since \(g^{-a}\) has to be integrable. The case of the half-plane has already been investigated (in \cite{BCEFGG}), and easily extends to convex cones. Here, we will only tackle convex sets, which, although more technical, follows the same general idea.

Throughout this section, \(\varphi:\R^{n-1}\to [0,+\infty)\)  is a convex function such that \(\varphi(0)=0\), \(g:\Omega\to(0,+\infty)\) is assumed to be locally Lipschitz continuous, and \(W:\Omega_1\to(0,+\infty)\) is convex.

\subsection{Differentiating the Borell-Brascamp-Lieb inequality}
Inequality \eqref{eq:convex_BBL},
\[
(1+h)^{a-n}\int_{B_h}Q_h^W(g)^{1-a} \geq \int_{\Omega} g^{1-a} +h\int_{\Omega_1} W^{1-a},
\]
is trivially an equality for \(h=0\), we thus ask compute its derivative. Let us first give a non-rigorous proof for clarity. The most difficult part is computing the derivative of \(\int_{B_h}Q_h^W(g)^{1-a}\), so let us start with that. Notice that \(\Omega_h\subset B_h\cap\Omega\), thus
\[
\frac{1}{h}\left(\int_{B_h}Q_h^W(g)^{1-a}-\int_{\Omega}g^{1-a}\right) =
\underbrace{\int_{\Omega_h}\frac{Q_h^W(g)^{1-a}-g^{1-a}}{h}}_{(i)}
-\underbrace{\frac{1}{h}\int_{\Omega\backslash\Omega_h}g^{1-a}}_{(ii)}
+ \underbrace{\frac{1}{h}\int_{B_h\backslash\Omega_h} Q_h^W(g)^{1-a}}_{(iii)}.
\]
Recalling Lemma \ref{le:hamilton_jacobi}, almost everyhere,
\[
\lim_{h\to0}\frac{Q_h^W(g)(x)-g(x)}{h}=-W^*(\nabla g(x)),
\]
thus \((i)\) should converge towards
\[
(a-1)\int_{\Omega}\frac{W^*(\nabla g)}{g^a}.
\]
Next, \((ii)\) can be rewritten in a way such that the convergence is quite clear:
\[
(ii)=\int_{\R^{n-1}} \left(\frac{1}{h}\int_{\varphi(x_1)}^{\varphi(x_1)+h}g^{1-a}(x_1,x_2)\d x_2\right) \d x_1\xrightarrow[h\to0]{} \int_{\R^{n-1}} g^{1-a}(x_1,\varphi(x_1))\d x_1
\]
as \(h\to 0\). Finally, giving \((iii)\) the same treatment,
\begin{align*}
  (iii)=&\int_{\R^{n-1}}\left(\frac{1}{h}\int_{h+(1+h)\varphi(x_1/(1+h))}^{h+\varphi(x_1)}Q^W_h(g)^{1-a}(x_1,x_2)\d x_2\right) \d x_1 \\
  &\xrightarrow[h\to0]{} \int_{\R^{n-1}} g^{1-a}(x_1,\varphi(x_1))(x_1\cdot\nabla \varphi(x_1)-\varphi(x_1))\d x_1,
\end{align*}
since \(Q_0^W(g)=g\) and
\[
\lim_{h\to0}\frac{1}{h}\left(\varphi(x_1)-(1+h)\varphi\left(\frac{x_1}{1+h}\right)\right)=x_1\cdot\nabla \varphi(x_1)-\varphi(x_1).
\]
Summing these results up, we find the claimed derivative at \(h=0\). Whenever \(\Omega\) is a convex cone, \(B_h\backslash\Omega_h=\emptyset\), and thus \((iii)=0\). In that case, the argument is much more succinct, but since it is also a corollary of the more general case, we will not address it. The conditions for the convergence to play out nicely are summed up in the following definition. They are mostly growth conditions on \(g\) and \(W\), and will come into play later on.
\begin{defi}\label{de:admissibility}
  The couple of functions \((g,W)\) is said to be admissible if the following conditions are satisfied for some constant \(\gamma\):
  \begin{itemize}
    \setlength\itemsep{0em}
    \setlength\itemindent{1em}
  \item[(C0)\quad] \(\gamma>\max\left(\frac{a}{n-1},1\right)\);
  \item[(C1)\quad] there exists \(A_1>0\) such that \(W(x)\geq A_1\norm x^\gamma\) for all \(x\in\Omega_1\);
  \item[(C2)\quad] there exists \(A_2>0\) such that \(W(x)\leq A_2(1+\norm x^\gamma)\) for all \(x\in\Omega_1\);
  \item[(C3)\quad] there exists \(A_3>0\) such that \(g(x)\geq A_3(1+\norm x^\gamma)\) for all \(x\in\Omega\);
  \item[(C4)\quad] there exists \(A_4>0\) such that \(\norm{\nabla g(x)}\leq A_4(1+\norm x^{\gamma-1})\) for all \(x\in\Omega\).
  \end{itemize}
\end{defi}
The challenge is to prove that under these conditions, \(Q_h^W(g)\) converges towards \(g\) in a controlled manner as \(h\to 0\). 
The main result of this section is the following:
\begin{thrm}\label{th:adm_cv}
  Assume that the couple \((g,W)\) is admissible, and that there exist some constants \(C>0\) and \(R>0\) such that 
  \begin{equation}\label{eq:phi_growth}
    \forall\, \norm{x_1}>R,\quad \abs{x_1\cdot\nabla\varphi(x_1)}\leq C\norm{(x_1,\varphi(x_1))}.
  \end{equation}
  Then
  \begin{equation}
    \lim_{h\to0} \frac{1}{h}\left(\int_{B_h}Q_h^W(g)^{1-a}(g) -\int_\Omega g^{1-a}\right)= (a-1)\int_\Omega\frac{W^*(\nabla g)}{g^a}-\int_{\R^{n-1}}g^{1-a}(x_1,\varphi(x_1))P(x_1)\d x_1,
  \end{equation}
  where \(P(x_1)=1+\varphi(x_1)-x_1\cdot\nabla\varphi(x_1)\).
\end{thrm}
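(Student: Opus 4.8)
The strategy is to make rigorous the three-term decomposition
\[
\frac{1}{h}\left(\int_{B_h}Q_h^W(g)^{1-a}-\int_\Omega g^{1-a}\right) = (i)_h - (ii)_h + (iii)_h
\]
with $(i)_h=\int_{\Omega_h}\tfrac{Q_h^W(g)^{1-a}-g^{1-a}}{h}$, $(ii)_h=\tfrac1h\int_{\Omega\setminus\Omega_h}g^{1-a}$, and $(iii)_h=\tfrac1h\int_{B_h\setminus\Omega_h}Q_h^W(g)^{1-a}$, and to pass to the limit in each of the three terms separately. The core analytic input is a quantitative comparison of $Q_h^W(g)$ with $g$: using (C3) (coercivity/growth of $g$ from below like $1+\norm{x}^\gamma$), (C2) (upper bound on $W$), and (C1) (lower bound on $W$ forcing superlinearity once (C0) gives $\gamma>1$), one shows that the minimizing $y$ in $Q_h^W(g)(x)=\inf_y\{g(x-hy)+hW(y)\}$ satisfies $\norm{y}\lesssim 1+\norm{x}^{\gamma-1}$ (roughly $hW(y)\le g(x)+hW(0)$ combined with (C1) and (C3)), hence $\norm{x-hy}$ stays comparable to $\norm{x}$ for small $h$, and $|Q_h^W(g)(x)-g(x)|\le h\,\Theta(x)$ for an explicit $\Theta$ with $\Theta(x)\lesssim 1+\norm{x}^{\gamma-1}$ coming from (C4) (a Lipschitz-type estimate on $g$ on the relevant ball) and the bound on $W^*$ near the origin.

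For term $(i)_h$: pointwise a.e.\ convergence of $\tfrac{Q_h^W(g)(x)-g(x)}{h}$ to $-W^*(\nabla g(x))$ is Lemma~\ref{le:hamilton_jacobi}; multiplying by the smooth factor coming from differentiating $t\mapsto t^{1-a}$ gives a.e.\ convergence of the integrand to $(a-1)\tfrac{W^*(\nabla g)}{g^a}$. Domination: $|Q_h^W(g)^{1-a}-g^{1-a}|/h \le (a-1)\,g^{-a}\,\Theta$ up to a constant (mean value theorem on $t\mapsto t^{1-a}$, using $Q_h^W(g)\ge$ some fraction of $g$ which follows from the same localization), and $g^{-a}\Theta \lesssim (1+\norm{x}^\gamma)^{-a}(1+\norm{x}^{\gamma-1})$, which is integrable precisely because (C0) gives $\gamma>\tfrac{a}{n-1}$, i.e.\ $a\gamma - (\gamma-1) > n-1$. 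So dominated convergence applies. For $(ii)_h$: rewrite it as $\int_{\R^{n-1}}\big(\tfrac1h\int_{\varphi(x_1)}^{\varphi(x_1)+h}g^{1-a}(x_1,x_2)\,dx_2\big)dx_1$; the inner average tends to $g^{1-a}(x_1,\varphi(x_1))$ by continuity of $g$ (Lebesgue differentiation in $x_2$), and is dominated by $g^{1-a}(x_1,\varphi(x_1))$ up to a constant since $g$ grows in $x_2$; integrability of $x_1\mapsto g^{1-a}(x_1,\varphi(x_1))\asymp (1+\norm{x_1}^\gamma)^{1-a}$ over $\R^{n-1}$ again uses (C0). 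For $(iii)_h$: write it as $\int_{\R^{n-1}}\big(\tfrac1h\int_{h+(1+h)\varphi(x_1/(1+h))}^{h+\varphi(x_1)}Q_h^W(g)^{1-a}(x_1,x_2)\,dx_2\big)dx_1$; the length of the $x_2$-interval, divided by $h$, converges to $\varphi(x_1)-x_1\cdot\nabla\varphi(x_1)$ (the limit $\tfrac1h(\varphi(x_1)-(1+h)\varphi(x_1/(1+h)))\to x_1\cdot\nabla\varphi(x_1)-\varphi(x_1)$ is a direct computation, valid a.e.\ by convexity of $\varphi$ and Rademacher, and the sign of this length forces $x_1\cdot\nabla\varphi(x_1)\ge\varphi(x_1)$, consistent with convexity and $\varphi(0)=0$), while $Q_h^W(g)\to g$ pointwise; combining, $(iii)_h\to \int_{\R^{n-1}}g^{1-a}(x_1,\varphi(x_1))\big(x_1\cdot\nabla\varphi(x_1)-\varphi(x_1)\big)dx_1$. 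Summing the three limits gives exactly the claimed expression with $P(x_1)=1+\varphi(x_1)-x_1\cdot\nabla\varphi(x_1)$.

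The main obstacle is the domination in term $(iii)_h$: here the integrand lives on the thin shell $B_h\setminus\Omega_h$, whose width is $\varphi(x_1)-(1+h)\varphi(x_1/(1+h))$, and this can grow with $\norm{x_1}$ — this is exactly why the hypothesis \eqref{eq:phi_growth} is needed. The plan is to use \eqref{eq:phi_growth} to bound $|x_1\cdot\nabla\varphi(x_1)|\le C\norm{(x_1,\varphi(x_1))}$ for $\norm{x_1}>R$, convert this (via convexity, $\varphi(x_1)\le x_1\cdot\nabla\varphi(x_1)$ for $\varphi(0)=0$... more precisely $0\le\varphi(x_1)\le x_1\cdot\nabla\varphi(x_1)\le C\sqrt{\norm{x_1}^2+\varphi(x_1)^2}$) into a bound $\varphi(x_1)\lesssim 1+\norm{x_1}$, hence the shell width is $\lesssim h(1+\norm{x_1})$, and on that shell $Q_h^W(g)(x_1,x_2)\ge c\,g(x_1,\varphi(x_1)) \gtrsim (1+\norm{x_1})^\gamma$ for small $h$ (monotonicity of $g$ in $x_2$ plus the localization estimate); then $\tfrac1h\int_{\text{shell}}Q_h^W(g)^{1-a} \lesssim (1+\norm{x_1})\cdot(1+\norm{x_1}^\gamma)^{1-a}$, integrable over $\R^{n-1}$ since $(a-1)\gamma - 1 > n-1$ is implied by $\gamma>\tfrac{a}{n-1}$ and $\gamma>1$. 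A secondary technical point is justifying that the one-sided derivative of $h\mapsto\int_{B_h}Q_h^W(g)^{1-a}$ at $h=0$ is what the limit of the difference quotient computes, and that this suffices for the subsequent inequality \eqref{eq:derived_convex_BBL}; since \eqref{eq:convex_BBL} is an equality at $h=0$ and a valid inequality for $h>0$, the right-sided derivative inequality is all that is required, and the decomposition above delivers precisely that limit.
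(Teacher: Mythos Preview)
Your decomposition into $(i)$, $(ii)$, $(iii)$ and your treatment of $(i)$ and $(ii)$ match the paper's approach (there are minor slips in the exponents --- the bound on $|Q_h^W(g)-g|$ is $\lesssim h(1+\norm{x}^\gamma)$, not $h(1+\norm{x}^{\gamma-1})$, and the integrability threshold for the dominating function over $\Omega\subset\R^n$ is $\gamma(a-1)>n$, not $n-1$ --- but (C0) together with $a\ge n$ gives $\gamma(a-1)\ge\gamma(n-1)>a\ge n$, so the conclusion survives).

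The genuine gap is in your handling of $(iii)$. You claim that \eqref{eq:phi_growth} yields $\varphi(x_1)\lesssim 1+\norm{x_1}$, and hence that the shell width is $\lesssim h(1+\norm{x_1})$. This is false: take $\varphi(x_1)=\norm{x_1}^2$, which satisfies \eqref{eq:phi_growth} (here $x_1\cdot\nabla\varphi=2\norm{x_1}^2$ and $\norm{(x_1,\varphi(x_1))}\sim\norm{x_1}^2$ for large $x_1$, so $C=3$ works), yet $\varphi$ grows quadratically. The inequality $\varphi(x_1)\le x_1\cdot\nabla\varphi(x_1)\le C\sqrt{\norm{x_1}^2+\varphi(x_1)^2}$ is vacuous once $C\ge 1$. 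A second related problem is that points of $B_h\setminus\Omega_h$ can lie \emph{outside} $\Omega$ (for $\varphi=\norm{\cdot}^2$ this happens as soon as $\norm{x_1}^2>1+h$), so $g$ is not defined there and your ``monotonicity of $g$ in $x_2$'' argument for the lower bound on $Q_h^W(g)$ does not make sense on that part of the shell.

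The paper's fix is to abandon $\norm{x_1}$ as the reference scale and work instead with $u=\norm{(x_1,\varphi(x_1))}$ and $v=\abs{x_1\cdot\nabla\varphi(x_1)}$; hypothesis \eqref{eq:phi_growth} is then exactly $v\lesssim u$. One proves a separate estimate (Lemma~\ref{le:hard_cv}) comparing $Q_h^W(g)(x)$ on the shell directly with the boundary value $g(x_1,\varphi(x_1))$, obtaining $\abs{Q_h^W(g)(x)-g(x_1,\varphi(x_1))}\le Ch(1+u^\gamma+v^\gamma)$; combined with $v\lesssim u$ and (C3) this gives $Q_h^W(g)\gtrsim 1+u^\gamma$ on the shell for small $h$. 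The shell width divided by $h$ is bounded by $v\lesssim u$, so the dominating function for $(iii)$ is $\lesssim (1+u^{\gamma(a-1)})^{-1}$, which is integrable over $\R^{n-1}$ since $u\ge\norm{x_1}$ and $\gamma(a-1)>n>n-1$. Your strategy for $(iii)$ is salvageable along these lines, but as written it rests on a claim that fails for the very examples the theorem is meant to cover.
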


In what follows, we will use a good number of different positive constants, which will all be written \(C\) for convenience. They will not depend on \(x\in\R^n\), or \(h>0\), but might depend on \(A_i\), \(i\in\{1,2,3,4\}\), \(\gamma\).


\subsubsection{Convergence of \texorpdfstring{$(i)$}{(i)} and \texorpdfstring{$(ii)$}{(ii)}}
\begin{lemm}\label{le:easy_cv}
  If \((g,W)\) is admissible, there exist constants \(C>0\) and \(h_0>0\), such that for all \(0<h<h_0\), and \(x\in \Omega_h\),
  \[
  \abs{Q_h^W(g)(x)-g(x)}\leq Ch(1+\norm x^\gamma).
  \]
\end{lemm}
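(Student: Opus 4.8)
The plan is to get matching upper and lower bounds for $Q_h^W(g)(x)-g(x)$ when $x\in\Omega_h$, using the admissibility conditions (C1)–(C4) to control both the gradient of $g$ (hence its local oscillation) and the size of the minimizing $y$. For the upper bound one simply takes $y=0$ in the infimal convolution: since $x\in\Omega_h$ means $x-0\cdot h = x\in\Omega$ (in fact $x-he=x-(0,\dots,0,h)\in\Omega$, but more directly $g$ is finite at $x$), we get $Q_h^W(g)(x)\le g(x)+hW(0)$, and by (C2) this gives $Q_h^W(g)(x)-g(x)\le hW(0)\le C h\le Ch(1+\norm{x}^\gamma)$. The real work is the lower bound.

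For the lower bound, the first step is to localize the infimum: I would show that for $x\in\Omega_h$ and $h$ small, the infimum defining $Q_h^W(g)(x)=\inf_y\{g(x-hy)+hW(y)\}$ is attained (or nearly attained) at some $y$ with $\norm{y}\le C(1+\norm{x}^{\gamma-1})$ — or perhaps a cruder bound like $\norm{y}\le C(1+\norm{x})$ suffices. This follows by comparing with the value at $y=0$: any competitor $y$ must satisfy $hW(y)\le g(x-hy)-g(x)+hW(0)$; using (C1) $W(y)\ge A_1\norm{y}^\gamma$ on the left, and on the right bounding $g(x-hy)\le g(x)+$ (oscillation of $g$ on a ball), where the oscillation is controlled via (C4): $\norm{\nabla g}\le A_4(1+\norm{z}^{\gamma-1})$, so $|g(x-hy)-g(x)|\le A_4 h\norm{y}(1+(\norm{x}+h\norm{y})^{\gamma-1})$. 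Balancing $A_1 h\norm{y}^\gamma$ against this gives the claimed bound on $\norm{y}$, after splitting into the cases $h\norm{y}\lesssim\norm{x}$ and $h\norm{y}\gtrsim\norm{x}$ (the latter absorbed by the coercive growth $\gamma>1$). With $y$ so localized, $x-hy$ stays in a ball of radius $\le\norm{x}+Ch(1+\norm{x}^{\gamma-1})$ around $x$, and then
\[
Q_h^W(g)(x)-g(x)=\inf_{\norm{y}\le C(1+\norm{x}^{\gamma-1})}\{g(x-hy)-g(x)+hW(y)\}\ge -A_4 h\cdot C(1+\norm{x}^{\gamma-1})\bigl(1+(\norm{x}+Ch(1+\norm{x}^{\gamma-1}))^{\gamma-1}\bigr),
\]
which, after expanding and keeping the dominant power, is bounded below by $-Ch(1+\norm{x}^\gamma)$ for $h<h_0$. (One also uses $hW(y)\ge 0$ here, so the $W$ term only helps.)

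The main obstacle I anticipate is making the localization-of-the-argmin step clean and uniform in both $x$ and $h$: the bound on $\norm{y}$ must be of the right order $1+\norm{x}^{\gamma-1}$ (not something worse) so that the final error is exactly $O(h(1+\norm{x}^\gamma))$ rather than a larger power of $\norm{x}$, and the case analysis $h\norm{y}$ small vs.\ large relative to $\norm{x}$ has to be handled carefully so that the constant $h_0$ can be chosen independently of $x$. A secondary point is that one should check $x-hy\in\Omega$ (so that $g(x-hy)$ is finite and the estimates on $g$ apply) for all the relevant $y$; since $x\in\Omega_h=\Omega+he$ and $\Omega$ is convex with $\varphi\ge 0$, one can verify that the localized ball around $x$ stays inside $\Omega$ for $h$ small, or alternatively restrict attention to $y$ in the feasible set — in any case the growth condition on $g$ near $\partial\Omega$ via (C3) prevents degeneracies. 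Everything else is a routine balancing of powers of $\norm{x}$ and $h$.
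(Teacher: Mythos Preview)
Your overall strategy matches the paper's---an upper bound by plugging in a test point, a lower bound by localizing the minimizer and using the growth conditions---but there are two concrete gaps.

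\textbf{Upper bound.} Taking $y=0$ does not work: $W$ is only finite on $\Omega_1=\{x_2\ge\varphi(x_1)+1\}$, and since $\varphi(0)=0$ we have $0\notin\Omega_1$, so $W(0)=+\infty$ and (C2) does not apply. The paper instead takes $y=e$, which lies in $\Omega_1$ and for which $x-he\in\Omega$ whenever $x\in\Omega_h$. This forces you to estimate $g(x-he)-g(x)$ via (C4), giving $Q_h^W(g)(x)-g(x)\le Ch(1+\norm{x}^{\gamma-1})+hW(e)$, which is still $\le Ch(1+\norm{x}^\gamma)$.

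\textbf{Lower bound.} Your primary localization $\norm{y}\le C(1+\norm{x}^{\gamma-1})$ is the wrong order. For $\gamma<2$ it is simply false (when $g(\cdot)=W(\cdot+e)$ the minimizer is $y=(x+e)/(1+h)$, of size $\sim\norm{x}$), and for $\gamma>2$ it is too weak: your displayed bound then yields a term of order $h(1+\norm{x}^{\gamma-1})\cdot\norm{x}^{\gamma-1}\sim h\norm{x}^{2(\gamma-1)}$, which exceeds $h\norm{x}^\gamma$. The correct bound---which you mention as an alternative---is $\norm{y}\le C(1+\norm{x})$, obtained from $A_1\norm{y}^\gamma\le C(1+\norm{y})(1+\norm{x}^{\gamma-1})$. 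With this in hand your argument goes through: $(h\norm{y})^{\gamma-1}\le C(1+\norm{x}^{\gamma-1})$, so $\abs{g(x-hy)-g(x)}\le Ch(1+\norm{x})(1+\norm{x}^{\gamma-1})\le Ch(1+\norm{x}^\gamma)$. The paper proceeds slightly differently here: rather than discard $hW(y)\ge0$, it keeps the coercive term $hA_1\norm{y}^\gamma$ and optimizes $\inf_y\{-C\norm{y}(1+\norm{x}^{\gamma-1})+A_1\norm{y}^\gamma\}$ explicitly, which yields the same $-Ch(1+\norm{x}^\gamma)$.
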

\begin{proof}
  First, let \(x',x\in\Omega\). Then, we may estimate \(\abs{g(x')-g(x)}\) using hypothesis (C4):
  \begin{align}
    \abs{g(x')-g(x)} &\leq \int_0^1 \norm{\frac{\partial}{\partial\theta}g(x+\theta(x'-x))}\d\theta \notag\\
    &\leq \norm{x'-x}\int_0^1 A_4(1+\norm{x+\theta(x'-x)}^{\gamma-1})\d\theta \notag\\
    &\leq C\norm{x'-x}\left(1+\norm x^{\gamma-1}+\norm{x'-x}^{\gamma-1}\right). \label{eq:g_growth}
  \end{align}
  Now, let \(0<h\leq1\) and \(x\in\Omega_h\). Then, \(x-he\in\Omega\), so  
  \begin{align*}
    Q_h^W(g)(x)-g(x) &\leq g(x-he)+hW(e)-g(x)\\
    &\leq Ch(1+\norm x^{\gamma -1}+h^{\gamma-1})+hW(e)\\
    &\leq Ch(1+\norm x^{\gamma-1}).
  \end{align*}
  For the converse inequality, we will of course use hypotheses (C1) and (C3), but we first have to localize the point where the infimum \(Q_h^W(g)(x)\) is reached. Let \(y\in\Omega_1\) be such that \(Q_h^W(g)(x)=g(x-hy)+hW(y)\). Then, invoking hypothesis (C1) and inequality \eqref{eq:g_growth},
  \begin{align*}
    hA_1\norm y^\gamma \leq hW(y) &= Q_h^W(g)(x)-g(x-hy) \\
    &=Q_h^W(g)(x)-g(x)+g(x)-g(x-hy) \\
    &\leq Ch(1+\norm x^{\gamma-1}) + Ch\norm y(1+\norm x^{\gamma -1}+(h\norm y)^{\gamma-1}).
  \end{align*}
  We thus choose \(h_0\in(0,1)\) such that for any \(h\in(0,h_0)\), \(A_1-Ch^{\gamma-1} > h^{\gamma-1}\). Then, for any \(h\in(0,h_0)\),
  \begin{align*}
    h^{\gamma-1}\norm y^\gamma & < (A_1-Ch^{\gamma-1})\norm y^\gamma\\
    &\leq C(1+\norm y)(1+\norm x^{\gamma-1}),
  \end{align*}
  which implies that
  \[
  h^{\gamma-1}\norm y^{\gamma-1} \leq C(1+\norm x^{\gamma-1}),
  \]
  since \(\norm y^{\gamma-1}\leq \max\left(1,2\frac{\norm y^\gamma}{1+\norm y}\right)\).
  Now, using inequality \eqref{eq:g_growth} once again,
  \begin{align*}
    \abs{g(x-hy)-g(x)}&\leq Ch\norm y(1+\norm x^{\gamma-1}+h^{\gamma-1}\norm y^{\gamma-1})\\
    &\leq Ch\norm y(1+\norm x^{\gamma-1}).
  \end{align*}
  Plugging this in the definition of \(Q_h^W(g)(x)\), we find
  \begin{align*}
    Q_h^W(g)(x)-g(x) &\geq \inf_{y\in\Omega_1} \left\{ -Ch\norm y(1+\norm x^{\gamma-1}) + hA_1\norm y^\gamma \right\}\\
    &\geq \inf_{y\in\R^n}\{\dots\} = -Ch(1+\norm x^\gamma).
  \end{align*}
  To conclude, it is enough to notice that \(1+\norm x^{\gamma-1}\leq 2+\norm x^\gamma\) since $\gamma >1$.
\end{proof}

Now that we have this estimation, we may estimate the speed of convergence of \(Q_h^W(g)^{1-a}\) towards \(g^{1-a}\).
\begin{prop}\label{pr:easy_cv}
  If \((g,W)\) is admissible, there exist constants \(C>0\) and \(h_0>0\), such that for all \(0<h<h_0\), and \(x\in \Omega_h\),
  \[
  \frac{\abs{Q_h^W(g)^{1-a}(x)-g^{1-a}(x)}}{h}\leq \frac{C}{1+\norm x^{\gamma(a-1)}}.
  \]  
\end{prop}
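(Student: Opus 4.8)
The plan is to deduce the estimate for the $(1-a)$-powers from the $C^0$ bound of Lemma~\ref{le:easy_cv} through a one-variable mean value argument, using the coercivity hypothesis (C3) to keep everything bounded away from $0$. First I would fix $h_0\in(0,1)$ small enough that Lemma~\ref{le:easy_cv} applies on $(0,h_0)$ and, moreover, $C_0 h_0\le A_3/2$, where $C_0$ denotes the constant of that lemma and $A_3$ the constant of (C3). Then for $0<h<h_0$ and $x\in\Omega_h$ one has $\abs{Q_h^W(g)(x)-g(x)}\le C_0 h(1+\norm{x}^\gamma)$ while (C3) gives $g(x)\ge A_3(1+\norm{x}^\gamma)$, so every point $\xi$ lying on the segment between $g(x)$ and $Q_h^W(g)(x)$ satisfies
\[
\xi \ \ge\ g(x)-C_0 h_0\bigl(1+\norm{x}^\gamma\bigr)\ \ge\ \tfrac{A_3}{2}\bigl(1+\norm{x}^\gamma\bigr)\ >\ 0.
\]

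Next I would apply the mean value theorem to $t\mapsto t^{1-a}$, which is smooth and strictly decreasing on $(0,+\infty)$ with derivative $(1-a)t^{-a}$: for a suitable such $\xi$,
\[
\abs{Q_h^W(g)^{1-a}(x)-g^{1-a}(x)}=(a-1)\,\xi^{-a}\,\abs{Q_h^W(g)(x)-g(x)}\ \le\ (a-1)\Bigl(\tfrac{2}{A_3}\Bigr)^{a}\bigl(1+\norm{x}^\gamma\bigr)^{-a}\, C_0 h\bigl(1+\norm{x}^\gamma\bigr).
\]
Dividing by $h$ yields a bound of the form $C'\bigl(1+\norm{x}^\gamma\bigr)^{-(a-1)}$. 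Finally I would absorb this into the desired power by the elementary comparison: for every $\alpha>0$ there are constants $0<c_1\le c_2$ with $c_1(1+t^\alpha)\le(1+t)^\alpha\le c_2(1+t^\alpha)$ for all $t\ge0$; taking $t=\norm{x}^\gamma$ and $\alpha=a-1$ gives $\bigl(1+\norm{x}^\gamma\bigr)^{a-1}\ge c_1\bigl(1+\norm{x}^{\gamma(a-1)}\bigr)$, whence the stated inequality with $C=C'/c_1$.

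I do not anticipate any genuine difficulty here: the two points to watch are that $h_0$ depends only on $C_0$ and $A_3$, so that it is uniform in $x$ and $h$ as required, and the comparison $(1+t)^\alpha\asymp 1+t^\alpha$, which follows from a one-line convexity/monotonicity argument (or from equivalence of norms on $\R^2$). The rest is just the mean value theorem applied to a monotone power on a half-line that (C3) keeps away from the origin.
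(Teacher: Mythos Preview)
Your proposal is correct and follows essentially the same route as the paper: apply a mean-value/integral bound to $t\mapsto t^{1-a}$, combine the $C^0$ estimate of Lemma~\ref{le:easy_cv} with a uniform lower bound of order $1+\norm{x}^\gamma$ on both $g$ and $Q_h^W(g)$, and finish with the elementary equivalence $(1+t)^{a-1}\asymp 1+t^{a-1}$. The only cosmetic difference is that the paper obtains the lower bound $Q_h^W(g)(x)\ge C(1+\norm{x}^\gamma)$ by a direct computation from (C1) and (C3), whereas you get it more cheaply by shrinking $h_0$ so that Lemma~\ref{le:easy_cv} already forces $Q_h^W(g)\ge g-\tfrac{A_3}{2}(1+\norm{x}^\gamma)\ge \tfrac{A_3}{2}(1+\norm{x}^\gamma)$; either way works.
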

\begin{proof}
  First, let \(\alpha,\beta>0\). Then,
  \[
  \abs{\int_\alpha^\beta t^{-a}\d t}=\abs{\frac{1}{1-a}(\beta^{1-a}-\alpha^{1-a})}\leq \max(\alpha^{-a},\beta^{-a})\abs{\alpha-\beta},
  \]
  implying that
  \begin{equation}\label{eq:pow_ineq}
    \abs{\alpha^{1-a}-\beta^{1-a}}\leq (a-1)\abs{\alpha-\beta}(\alpha^{-a}+\beta^{-a}).
  \end{equation}
  Then, according to Lemma \ref{le:easy_cv}, there exists \(h_0>0\) such that for any \(h\in(0,h_0)\), and any \(x\in\Omega_h\),
  \begin{align}
    \frac{\abs{Q_h^W(g)^{1-a}(x)-g^{1-a}(x)}}{h}&\leq C\frac{\abs{Q_h^W(g)(x)-g(x)}}{h}\left(Q_h^W(g)^{-a}(x)+g^{-a}(x)\right)\notag\\
    &\leq C(1+\norm x^\gamma)\left(Q_h^W(g)^{-a}(x)+g^{-a}(x)\right). \label{eq:easy_cv1}
  \end{align}
  Now, hypotheses (C1) and (C3) and a straightforward computation yield
  \begin{align*}
    Q_h^W(g)(x)&\geq \inf_{y\in\Omega_1}\{A_3(1+\norm{x-hy}^\gamma)+hA_1\norm y^\gamma\}\\
    &\geq \inf_{y\in\R^n}\{A_3(1+\abs{\norm x-h\norm y}^\gamma)+hA_1\norm y^\gamma\}\\
    &\geq C(1+\norm x^\gamma).
  \end{align*}
  Using (C3) once again, we know that
  \[
  g^{-a}(x)\leq \left(A_3(1+\norm x^\gamma)\right)^{-a};
  \]
  putting these two inequalities together with inequality \eqref{eq:easy_cv1}, we finally obtain
  \begin{align*}
    \frac{\abs{Q_h^W(g)^{1-a}(x)-g^{1-a}(x)}}{h}&\leq C\frac{1+\norm x^\gamma}{(1+\norm x^\gamma)^a}\\
    &\leq  \frac{C}{1+\norm x^{\gamma(a-1)}}.
  \end{align*}  
\end{proof}
Proposition \ref{pr:easy_cv}, together with Lemma \ref{le:hamilton_jacobi}, proves the dominated convergence, and
\[
\lim_{h\to0}(i)=(a-1)\int_\Omega\frac{W^*(\nabla g)}{g^a},
\]
as claimed. The convergence of \((ii)\) is straightforward, as it is a direct implication of the local Lipschitz continuity of \(g\) and hypothesis (C3).


\subsubsection{Convergence of \texorpdfstring{$(iii)$}{(iii)}}
This term is a bit trickier, because comparing \(Q^W_h(g)\) to \(g\) is not possible on the entirety of \(B_h\), \(g\) being defined only on \(\Omega\). For many functions \(\varphi\), \(B_h\not\subset\Omega\) as is showcased on figure \ref{fi:domains} below.
\begin{figure}[h!]
  \centering
  \includegraphics{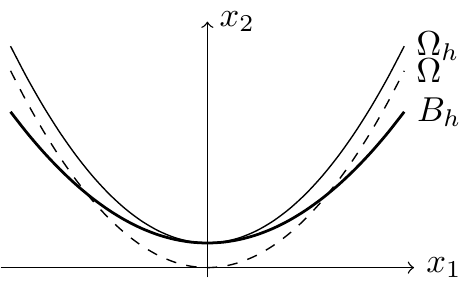}
  \caption{Graph of \(\Omega\), \(\Omega_h\), and \(B_h\) for \(\varphi(x_1)=\Vert x_1\Vert^2\) and \(h=0.5\)}
  \label{fi:domains}
\end{figure}
Thus, we prove the following result:
\begin{lemm}\label{le:hard_cv}
  If \((g,W)\) is admissible, there exist constants \(C>0\) and \(h_1>0\), such that for all \(0<h<h_1\), and \((x_1,x_2)\in B_h\backslash\Omega_h\),
  \begin{equation}\label{eq:hard_cv}
    \abs{Q_h^W(g)(x_1,x_2)-g(x_1,\varphi(x_1))}\leq hC(1+\norm{(x_1,\varphi(x_1))}^\gamma + \abs {x_1\cdot\nabla\varphi(x_1)}^\gamma).
  \end{equation}
\end{lemm}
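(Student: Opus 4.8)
The plan is to prove the two one-sided estimates making up \eqref{eq:hard_cv} separately, comparing $Q_h^W(g)(x_1,x_2)$ to $g(x_1,\varphi(x_1))$ from above by a judicious competitor in the infimum, and from below by reducing to a point of $\Omega_h$ where Lemma~\ref{le:easy_cv} already applies. Fix $(x_1,x_2)\in B_h\setminus\Omega_h$; by Lemma~\ref{le:convex_support} this means $h+(1+h)\varphi(\tfrac{x_1}{1+h})\le x_2<\varphi(x_1)+h$, so $(x_1,x_2)$ sits in a thin horizontal slab. First I would record a handful of elementary facts. From the subgradient inequality for the convex function $\varphi$ (used at $x_1$, and at $0$ with $\varphi(0)=0$) one gets $0\le x_1\cdot\nabla\varphi(x_1)-\varphi(x_1)$ and
\[
0<\varphi(x_1)+h-x_2\le\varphi(x_1)-(1+h)\varphi\!\left(\tfrac{x_1}{1+h}\right)\le h\bigl(x_1\cdot\nabla\varphi(x_1)-\varphi(x_1)\bigr)\le h\,\abs{x_1\cdot\nabla\varphi(x_1)};
\]
moreover $0\le x_2\le\varphi(x_1)+1$, so $\norm{(x_1,x_2)}\le C(1+\norm{(x_1,\varphi(x_1))})$ on the slab; and, integrating hypothesis~(C4) along the segment $[0,(x_1,\varphi(x_1))]$ (which lies in $\Omega$ since $\varphi$ is convex with $\varphi(0)=0$), $g(x_1,\varphi(x_1))\le C(1+\norm{(x_1,\varphi(x_1))}^\gamma)$. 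Every segment joining two points of $\Omega$ stays in $\Omega$ by convexity, so the Lipschitz-type inequality \eqref{eq:g_growth} from the proof of Lemma~\ref{le:easy_cv} may be used freely below.

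For the upper bound I would insert the competitor $y=\tfrac1{1+h}(x_1,x_2+1)$: the conditions $y\in\Omega_1$ and $(x_1,x_2)-hy=\bigl(\tfrac{x_1}{1+h},\tfrac{x_2-h}{1+h}\bigr)\in\Omega$ are both exactly the membership $(x_1,x_2)\in B_h$, so $Q_h^W(g)(x_1,x_2)\le g\bigl(\tfrac{x_1}{1+h},\tfrac{x_2-h}{1+h}\bigr)+hW(y)$. By~(C2) and the slab bound, $hW(y)\le A_2h(1+\norm y^\gamma)\le Ch(1+\norm{(x_1,\varphi(x_1))}^\gamma)$; and the displacement of $\bigl(\tfrac{x_1}{1+h},\tfrac{x_2-h}{1+h}\bigr)$ from $(x_1,\varphi(x_1))$ equals $\bigl(-\tfrac{h}{1+h}x_1,\tfrac{x_2-h-(1+h)\varphi(x_1)}{1+h}\bigr)$, of norm $\le h(\norm{x_1}+\abs{x_1\cdot\nabla\varphi(x_1)})$ by the inequalities above, so \eqref{eq:g_growth} with base point $(x_1,\varphi(x_1))$ bounds $\abs{g(\tfrac{x_1}{1+h},\tfrac{x_2-h}{1+h})-g(x_1,\varphi(x_1))}$ by $Ch(1+\norm{(x_1,\varphi(x_1))}^\gamma+\abs{x_1\cdot\nabla\varphi(x_1)}^\gamma)$ after a routine Young-type expansion. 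Adding the two contributions gives the upper half of \eqref{eq:hard_cv}.

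For the lower bound I would take a minimizer $y=(y_1,y_2)\in\Omega_1$ of $Q_h^W(g)(x_1,x_2)$ (it exists by Lemma~\ref{le:exactness}, or one argues with near-minimizers), write $z=(x_1,x_2)-hy\in\Omega$ so that $Q_h^W(g)(x_1,x_2)=g(z)+hW(y)$, and split into two cases. If $\norm z\ge C_0(1+\norm{(x_1,\varphi(x_1))})$ with $C_0$ chosen large enough from~(C3) and the crude bound on $g(x_1,\varphi(x_1))$, then $g(z)\ge A_3\norm z^\gamma\ge g(x_1,\varphi(x_1))$ and \eqref{eq:hard_cv} holds with zero error. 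Otherwise $\norm z$ is controlled; set $\delta=\varphi(x_1)+h-x_2\in(0,h\abs{x_1\cdot\nabla\varphi(x_1)}]$ and $x'=(x_1,x_2)+\delta e=(x_1,\varphi(x_1)+h)\in\Omega_h$. Since $y$ is still an admissible choice in the infimum defining $Q_h^W(g)(x')$ (only the argument of $g$ is translated by $\delta e$), one has $Q_h^W(g)(x')\le g(z+\delta e)+hW(y)=Q_h^W(g)(x_1,x_2)+\bigl(g(z+\delta e)-g(z)\bigr)$, and $\abs{g(z+\delta e)-g(z)}\le\delta A_4\bigl(1+(\norm z+\delta)^{\gamma-1}\bigr)\le Ch(1+\norm{(x_1,\varphi(x_1))}^\gamma+\abs{x_1\cdot\nabla\varphi(x_1)}^\gamma)$ using $\delta\le h\abs{x_1\cdot\nabla\varphi(x_1)}$ and the bound on $\norm z$. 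Since $x'\in\Omega_h$, Lemma~\ref{le:easy_cv} gives $Q_h^W(g)(x')\ge g(x')-Ch(1+\norm{(x_1,\varphi(x_1))}^\gamma)$, and \eqref{eq:g_growth} along the vertical segment from $(x_1,\varphi(x_1))$ to $x'$ gives $g(x')\ge g(x_1,\varphi(x_1))-Ch(1+\norm{(x_1,\varphi(x_1))}^\gamma)$. Chaining these three displays yields the lower half of \eqref{eq:hard_cv}, which together with the upper half completes the proof (for $h$ smaller than the $h_0$ of Lemma~\ref{le:easy_cv} and than $1$).

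The hard part is the lower bound. A direct estimate of $\abs{g(z)-g(x_1,\varphi(x_1))}$ for the minimizing $z$ is not enough: the localization available for $z$ through (C1)--(C3) only gives $\norm{z-(x_1,\varphi(x_1))}=O(h^{1-1/\gamma})$ rather than $O(h)$, which is too weak once one divides by $h$ in Theorem~\ref{th:adm_cv}. The device that recovers the correct power of $h$ is the vertical-shift step: it replaces $(x_1,x_2)$ by the nearby point $x'\in\Omega_h$, where Lemma~\ref{le:easy_cv} already furnishes a sharp comparison with $g$, and the price paid is only the slab width $\delta\le h\abs{x_1\cdot\nabla\varphi(x_1)}$; the preliminary case split then disposes, via the coercivity~(C3) of $g$, of those minimizers $z$ that wander far from $\partial\Omega$.
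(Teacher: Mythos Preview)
Your proof is correct. The upper bound uses essentially the same competitor as the paper (the paper takes $y=\bigl(\frac{x_1}{1+h},\,1+\varphi(\frac{x_1}{1+h})\bigr)$, you take $y=\frac{1}{1+h}(x_1,x_2+1)$; both land in $\Omega_1$ with $x-hy\in\Omega$ precisely by the condition $x\in B_h$, and the resulting displacement estimates coincide up to constants).

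The lower bound is where your argument genuinely diverges from the paper's. The paper works directly with the minimizer $y$: it first bootstraps the localization $h^{\gamma-1}\norm y^{\gamma-1}\le C(1+u^{\gamma-1}+v^{\gamma-1})$ from the upper bound together with \eqref{eq:g_growth} and (C1), then writes $Q_h^W(g)(x)-g(p(x))=g(x-hy)-g(p(x))+hW(y)$, bounds the first piece by $-Ch(\norm y+v)(1+u^{\gamma-1}+v^{\gamma-1})$, keeps the second as $hA_1\norm y^\gamma$, and optimizes over $\norm y$ to reach $-Ch(1+u^\gamma+v^\gamma)$. Your route instead recycles Lemma~\ref{le:easy_cv}: the vertical shift $x\mapsto x'=x+\delta e\in\Omega_h$ with $\delta\le h\,v$ lets you invoke that lemma at $x'$, and the only price is $\abs{g(z+\delta e)-g(z)}$, which you control by the case split on $\norm z$ (large $\norm z$ handled coercively via (C3), small $\norm z$ via (C4) and $\delta\le hv$). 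Both approaches give the same bound; yours is more modular and avoids the balancing step, while the paper's is self-contained and does not need the dichotomy on $\norm z$. Your closing remark that a ``direct estimate of $\abs{g(z)-g(p(x))}$'' alone yields only $O(h^{1-1/\gamma})$ is accurate, but note that this is not what the paper does: the balancing against $hW(y)$ is precisely how the paper recovers $O(h)$ without your vertical-shift device.
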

The proof follows the same logic as the proof of Lemma \ref{le:easy_cv}.
\begin{proof}
  Recall that, according to Lemma \ref{le:convex_support}
  \[
  \Omega_h=\left\{(x_1,x_2)\in\R^n,\,x_2\geq h+\varphi\left(x_1\right)\right\},\quad B_h=\left\{(x_1,x_2)\in\R^n,\,x_2\geq h+(1+h)\varphi\left(\frac{x_1}{1+h}\right)\right\},
  \]
  and that 
  \[
  \abs{g(x')-g(x)} \leq C\norm{x'-x}\left(1+\norm x^{\gamma-1}+\norm{x'-x}^{\gamma-1}\right) \tag{\ref{eq:g_growth} revisited}
  \]
  for any \(x',x\in\Omega\).

  \textbf{1.} Fix \(h\in(0,1)\), \(x=(x_1,x_2)\in B_h\backslash\Omega_h\), and define \(p(x_1,x_2)=(x_1,\varphi(x_1))\), its projection onto \(\partial\Omega\). Letting \(y=\left(\frac{x_1}{1+h},1+\varphi\left(\frac{x_1}{1+h}\right)\right)\), we find that \(y\in\Omega_1\), and also that \(x-hy\in\Omega\),  thus, with hypothesis (C2) and inequality \eqref{eq:g_growth},
  \begin{align*}
    Q_h^W(x)-g(p(x))&\leq g(x-hy)-g(p(x))+hW(y)\\
    &\leq C\norm{x-hy-p(x)}\left(1+\norm{p(x)}^{\gamma-1}+\norm{x-hy-p(x)}^{\gamma-1}\right)+hA_2(1+\norm y^\gamma).
  \end{align*}
  For brevity, let us write \(u=\norm{p(x)}=\norm{(x_1,\varphi(x_1)}\), and \(v=x_1\cdot\varphi(x_1)=\abs{x_1\cdot\varphi(x_1)}\).
  Now, notice that
  \[
  \norm{x-hy-p(x)}=h\norm{\left(\frac{x_1}{1+h},\frac{x_2-h-\varphi(x_1)}{h}-\varphi\left(\frac{x_1}{1+h}\right)\right)}.
  \]
  From the definition of \(\Omega_h\) and \(B_h\), we find out that
  \begin{align*}
    0\leq \frac{h+\varphi(x_1)-x_2}{h} &\leq \frac{\varphi(x_1)-(1+h)\varphi(x_1/(1+h))}{h}\\
    &\leq x_1\cdot\nabla\varphi(x_1)=v,
  \end{align*}
  since \(\varphi\) is convex and nonnegative. Thus,
  \begin{align*}
    \norm{x-hy-p(x)}&\leq h\norm{\left(\frac{x_1}{1+h},\varphi\left(\frac{x_1}{1+h}\right)\right)}+h\abs{x_1\cdot\nabla\varphi(x_1)}\\
    &\leq h(\norm{p(x)}+\abs{x_1\cdot\nabla\varphi(x_1)})=h(u+v),
  \end{align*}
  so, since \(h<1\),
  \begin{align*}
    1+\norm{p(x)}^{\gamma-1}+\norm{x-hy-p(x)}^{\gamma-1}&\leq 1+u^{\gamma-1}+(h(u+v))^{\gamma-1}\\
    &\leq C(1+u^{\gamma-1}+v^{\gamma-1}).
  \end{align*}
  Finally,
  \[
  A_2(1+\norm y^\gamma) = A_2\left(1+\norm{\frac{x_1}{1+h},1+\varphi\left(\frac{x_1}{1+h}\right)}^\gamma\right)\leq C(1+u^\gamma).
  \]
  Putting all these inequalities together, we find   
  \begin{align}
    Q_h^W(x)-g(p(x)) &\leq hC(u+v)(1+u^{\gamma-1}+v^{\gamma-1})+hC(1+u^\gamma)\notag\\
    &\leq hC(1+u^\gamma+v^\gamma).\label{eq:hard_cv_upper}
  \end{align}
  
  \textbf{2.} Conversely, let \(y\in\Omega_1\) be such that \(Q_h^W(g)(x)=g(x-hy)+hW(y)\). As before, we localize \(y\). Using hypothesis \(A_1\) and inequalities \eqref{eq:g_growth} and \eqref{eq:hard_cv_upper},
  \begin{align*}
    hA_1\norm y^\gamma &\leq hW(y) = Q^W_h(g)(x)-g(p(x))+g(p(x))-g(x-hy)\\
    &\leq hC(1+u^\gamma+v^\gamma) +C\norm{x-hy-p(x)}\left(1+u^{\gamma-1}+\norm{x-hy-p(x)}^{\gamma-1}\right)\\
    &\leq hC(1+u^\gamma+v^\gamma) + hC(\norm y+v)\left(1+u^{\gamma-1}+h^{\gamma-1}(\norm y+v)^{\gamma-1}\right)\\
    &\leq hC(1+u^\gamma+v^\gamma)+hC(\norm y+v)\left(1+u^{\gamma-1}+h^{\gamma-1}\norm y^{\gamma-1}+ v^{\gamma-1}\right).
  \end{align*}
  Rearranging the terms and dividing by \(h\) yields
  \begin{align*}
    A_1\norm y^\gamma -Ch^{\gamma-1}\norm y^{\gamma-1}(\norm y+v)&\leq C(1+u^\gamma+v^\gamma) + C(\norm y+v)\left(1+u^{\gamma-1}+v^{\gamma-1}\right)\\
    &\leq C(1+u+v+\norm y)\left(1+u^{\gamma-1}+v^{\gamma-1}\right).
  \end{align*}
  We must now split the reasoning in two cases: either \(\norm y \leq v\), in which case the conclusion follows, or \(\norm y \geq v\), and then \( A_1\norm y^\gamma -Ch^{\gamma-1}\norm y^{\gamma-1}(\norm y+v)\geq  A_1\norm y^\gamma -2Ch^{\gamma-1}\norm y^\gamma\). We thus choose \(0<h_1<1\) such that for all \(h\in (0,h_1)\), \(A_2-2Ch^{\gamma-1}\geq h^{\gamma-1}\). Then, we have, for any \(h\in(0,h_1)\),
  \[
  \frac{h^{\gamma-1}\norm y^\gamma}{1+u+v+\norm y}\leq C\left(1+u^{\gamma-1}+v^{\gamma-1}\right).
  \]
  Once again, either \(\norm y\leq 1+u+v\), or
  \[
  h^{\gamma-1}\norm y^{\gamma-1}\leq\frac{2h^{\gamma-1}\norm y^\gamma}{1+u+v+\norm y}.
  \]
  Taking the greatest of the constants in those two cases, we may conclude that
  \begin{equation}\label{eq:hard_cv_loc}
    h^{\gamma-1}\norm y^{\gamma-1}\leq C(1+u^{\gamma-1}+v^{\gamma-1}).
  \end{equation}
  
  \textbf{3.} 
  We may now proceed with the converse inequality. Invoking once again inequality \eqref{eq:g_growth}, and then inequality \eqref{eq:hard_cv_loc},
  \begin{align*}
    \abs{g(x-hy)-g(p(x))}&\leq hC(\norm y+v)(1+u^{\gamma-1}+h^{\gamma-1}\norm y^{\gamma-1}+v^{\gamma-1})\\
    &\leq hC(\norm y+v)(1+u^{\gamma-1}+v^{\gamma-1}).
  \end{align*}
  Finally, 
  \begin{align*}
    Q_h^W(g)(x)-g(p(x))&=g(x-hy)-g(p(x))+hW(y)\\
    &\geq -hC(\norm y+v)(1+u^{\gamma-1}+v^{\gamma-1})+hA_2\norm y^{\gamma}\\
    &\geq h\inf_{y\in\R^n}\{-C(\norm y+v)(1+u^{\gamma-1}+v^{\gamma-1})+A_2\norm y^{\gamma}\}\\
    &\geq -hC\left(1+u^{\gamma-1}+v^{\gamma-1}\right)^{\gamma/(\gamma-1)},
  \end{align*}
  and we may conclude.
\end{proof}

We may now prove Theorem \ref{th:adm_cv}: using the same notations as in the proof above, that is \(u=\norm{p(x)}=\norm{(x_1,\varphi(x_1)}\), and \(v=x_1\cdot\varphi(x_1)=\abs{x_1\cdot\varphi(x_1)}\), hypothesis (C2) immediately yields, for all \(h>0\) and all \(x\in B_h\backslash\Omega_h\),
\[
g^{-a}(p(x)) \leq \frac{C}{(1+u^\gamma)^a}.
\]
Furthermore, inequality \eqref{eq:hard_cv} and hypothesis (C2) yield
\[
Q_h^W(g)(x)\geq -hC(1+u^\gamma+v^\gamma) + C(1+u^\gamma)
\]
for all \(x\in B_h\backslash\Omega_h\) and \(0<h<h_1\). Now, assumption \eqref{eq:phi_growth} reads: for all \(x_1\in\R^{n-1}\) such that \(\norm{x_1}>R\), 
\[
v\leq C u.
\]
Since both \(u\) and \(v\) are bounded functions of \(x\) on the set \(\{(x_1,x_2)\in B_h\backslash\Omega_h,\,\norm{x_1}\leq R\}\), there exists \(h_2>0\) such that, for all \(0<h<h_2\), 
\[
Q_h^W(g)(x)\geq \begin{cases} C > 0 & \text{whenever }\norm{x_1}\leq R \\ C(1+u^\gamma) & \text{whenever }\norm{x_1}> R \end{cases}
\]
Thus, for all \(0<h<h_2\) and all \(x\in B_h\backslash\Omega_h\),
\[
Q_h^W(g)^{-a}(x) \leq \frac{C}{(1+u^\gamma)^a}.
\]

Finally, invoking inequality \eqref{eq:pow_ineq} together with assumption \eqref{eq:phi_growth} yields, for any \(0<h<h_2\) and \(x=(x_1,x_2)\in B_h\backslash\Omega_h\),
\begin{align*}
  \frac{\abs{Q_h^W(g)^{1-a}(x)-g^{1-a}(p(x))}}{h} &\leq C \frac{\abs{Q_h^W(g)(x)-g(p(x))}}{h}\left(Q_h^W(g)^{-a}(x)+g^{-a}(p(x))\right)\\
  &\leq C(1+u^\gamma+v^\gamma)\frac{1}{(1+u^\gamma)^a}\\
  &\leq C\frac{1}{1+u^{(a-1)\gamma}}.
\end{align*}  

Note that \(u\geq \norm{x_1}\), and we chose \(a\) such that \((a-1)\gamma>n\), hence \(q(a-1)\gamma-q>q(n-1)\), thus the dominated convergence theorem applies, and we may conclude that
\begin{align*}
  \lim_{h\to0}\int_{B_h\backslash\Omega_h}\frac{1}{h}Q_h^W(g)^{1-a}&=\lim_{h\to0}\int_{\R^{n-1}}\left(\frac{1}{h}\int_{h+(1+h)\varphi(x_1/(1+h))}^{h+\varphi(x_1)}g^{1-a}(x_1,\varphi(x_1))\d x_2\right)\d x_1 \\
  &=\int_{\R^{n-1}}(x_1\cdot\nabla\varphi(x_1)-\varphi(x_1))g^{1-a}(x_1,\varphi(x_1))\d x_1,
\end{align*}
this last equality also being a dominated convergence result, using the hypotheses on \(g\).


\subsection{Extending the differentiated inequality}

We just proved that whenever \((g,W)\) is admissible, with \(\int_\Omega g^{-a}=\int_{\Omega_1}W^{-a}=1\), and \(\varphi\) satisfies the asymptotic growth condition \eqref{eq:phi_growth}, then
\begin{equation}\label{eq:diff_ineq}
  (a-n)\int_\Omega g^{1-a}+(a-1)\int_\Omega \frac{W^*(\nabla g)}{g^a}-\int_{\R^{n-1}}g^{1-a}(x_1,\varphi(x_1))P(x_1)\d x_1 \geq \int_{\Omega_1}W^{1-a}.
\end{equation}
Let \(q>1\). We want to use this inequality with \(W(x)=C\norm{x}^q/q\), where \(C>0\) is such that \(\int_{\Omega_1}W^{-a}=1\). The goal being to prove Sobolev-type inequalities, we may consider only the real \(q\) such that their conjugate exponent \(p=q/(q-1)\), which will appear in \(W^*\), is strictly less than \(n\). Thus, we assume that \(q>n/(n-1)\), and conditions (C0), (C1) and (C2) are automatically satisfied with \(\gamma=q\).

We now compute \(W^*\):
\begin{align}
  W^*(y)=\sup_{x\in\Omega_1}\{x\cdot y-C\norm{x}^q/q\}
  &\leq \sup_{x\in\R^n}\{x\cdot y-C\norm{x}^q/q\} \label{eq:legendre}\\
  \notag&=\sup_{R\geq 0}\sup_{\norm x=R}\{x\cdot y-C\norm{x}^q/q\}\\
  \notag&=\sup_{R\geq 0} \{R\norm y_*-CR^q/q\}\\
  \notag&=C^{1-p}\norm y_*^p/p.
\end{align}
It is important to note that \eqref{eq:legendre} becomes an equality for \(y=\nabla g(z)\) whenever \(g(\,.\,)=W(\,.+e)\), since in that case,
\[
 W^*(\nabla g(z))=\sup_{x\in\Omega_1}\{x\cdot \nabla g(z)-W(x)\}=\sup_{x\in\Omega}\{(x+e)\cdot \nabla g(z)-g(x)\}= e\cdot\nabla g(z)+g^*(\nabla g(z))
\]
and the supremum is indeed reached inside the right set. Optimality is not lost, and inequality \eqref{eq:diff_ineq} then becomes
\begin{equation}\label{eq:diff_ineq_bis}
  (a-n)\int_\Omega g^{1-a}+C^{1-p}\left(\frac{a-1}{p}\right)\int_\Omega \frac{\norm{\nabla g}^p}{g^a}-\int_{\R^{n-1}}g^{1-a}(x_1,\varphi(x_1))P(x_1)\d x_1 \geq \int_{\Omega_1}W^{1-a}.
\end{equation}

The next step is to lift the restrictions on the function \(g\), extending the results to more general functions. Our tool here will be approximation by admissible functions.




\subsubsection{\texorpdfstring{$f=g^{(p-a)/p}$}{f} is a smooth function with compact support} 

Let \(f\in C^\infty_c(\Omega)\) be a nonnegative function such that \(\int_\Omega f^{ap/(a-p)}=1\). Let us fix some \({\gamma>\max\{1,a/(n-1)\}}\) and consider, for \(\eps>0\),
\[
f_\eps(x)=\left(\eps\norm{x+e}^{-\gamma (a-p)/p}+C_\eps f\right),
\]
where \(C_\eps\) is such that \(\int_\Omega f_\eps^{ap/(a-p)} =1\), whenever \(\eps\) is small enough for \(C_\eps\) to exist.
It is not difficult to see that the corresponding functions \(g_\eps=f_\eps^{p/(p-a)}\) satisfy conditions (C3) and (C4), and that \(\int_\Omega g_\eps^{-a}=1\). Furthermore, \(C_\eps\) increases strictly as \(\eps\) decreases towards \(0\), and an argument of continuity shows that \(\lim_{\eps\to0}C_\eps=1\), meaning that, pointwise, \(\lim_{\eps\to0}g_\eps=f^{(p-a)/p}\eqqcolon g\). Finally, the dominated convergence theorem, applied to \(g_\eps^{1-a}=f_\eps^{(a-1)p/(a-p)}\) proves that inequality \eqref{eq:diff_ineq_bis} is indeed valid for \(g\). Rewriting it with \(f\) yields
\begin{equation}\label{eq:diff_ineq_ter}
  \begin{split}
    (a-n)\int_\Omega f^{p\frac{a-1}{a-p}}+C^{1-p}\left(\frac{a-1}{p}\right)\left(\frac{p}{a-p}\right)^p\int_\Omega \norm{\nabla f}^p-\int_{\R^{n-1}}f^{p\frac{a-1}{a-p}}(x_1,\varphi(x_1))P(x_1)\d x_1 \\
    \geq \int_{\Omega_1}W^{1-a}.
  \end{split}
\end{equation}


\subsection*{Acknowledgements}
This work was partly written while the author was visiting the beautiful Institute Mittag-Leffler in Stockholm, which we thank for its hospitality.
This research was supported by the French ANR-12-BS01-0019 STAB project.


\bibliographystyle{abbrv}
\bibliography{GNS}

\end{document}